\newtheorem{theorem}{Theorem}
\newtheorem{definition}{Definition}
\newtheorem{proposition}{Proposition}
\newtheorem{conjecture}{Conjecture}
\newtheorem{corollary}{Corollary}
\newtheorem{lemma}{Lemma}
\newtheorem{case}{Case}
\numberwithin{subcase}{case}
\title{The Waldspurger Transform of Permutations and Alternating Sign Matrices}
\author{James McKeown\thanks{I would like to thank my advisor, Drew Armstrong, for introducing me to Waldspurger's result, and for all of his help and insight.}}
\begin{document}
\maketitle

\begin{abstract}
In 2005 J.~L.~Waldspurger proved the following theorem: given a finite real reflection group $W$, the closed positive root cone is tiled by the images of the open weight cone under the action of the linear transformations $id-w$. Shortly thereafter E. Meinrenken extended the result to affine Weyl groups. P.V. Bibikov and V.S. Zhgoon then gave a uniform proof for a discrete reflection group acting on a simply-connected space of constant curvature.

 In this paper we show that the Waldspurger and Meinrenken theorems of type A give a new perspective on the combinatorics of the symmetric group. In particular, for each permutation matrix $w \in \mathfrak{S}_n$ we define a non-negative integer matrix $\mathbf{WT}(w)$, called the {\em Waldspurger transform} of $w$. The definition of the matrix $\mathbf{WT}(w)$ is purely combinatorial but its columns are the images of the fundamental weights under the action of $id-w$, expressed in simple root coordinates. The possible columns of $\mathbf{WT}(w)$ (which we call {\em UM vectors}) are in bijection with many interesting structures including: unimodal Motzkin paths, abelian ideals in the Lie algebra $\mathfrak{sl}_n(\mathbb{C})$, Young diagrams with maximum hook length $n$, and integer points inside a certain polytope.
 
We show that the sum of the entries of $\mathbf{WT}(w)$ is equal to half the entropy of the corresponding permutation $w$, which is known to equal the rank of $w$ in the Dedekind-MacNeille completion of the Bruhat order. Inspired by this, we extend the Waldpurger transform $\mathbf{WT}(M)$ to alternating sign matrices $M$ and give an intrinsic characterization of the image. This provides a geometric realization of Dedekind-MacNeille completion of the Bruhat order (a.k.a.~the lattice of alternating sign matrices).
\end{abstract}

\newpage

\section{Introduction}

Our work is motivated by making the following theorems explicit for type A, where the finite Weyl group is the symmetric group, $\mathfrak{S}_n$ and the affine Weyl group is $\tilde{\mathfrak{S}}_n$.
\begin{theorem}{J.L.Waldspurger, 2005} \cite{waldspurger2007remarque}

 Let $W$ be a Weyl group presented as a reflection group on a Euclidean vector space $V$.  Let $C_{\omega}\subset V$ be the open cone over the fundamental weights and $C_R \subset V$ the closed cone spanned by the positive roots.  Let the cone associated with group element $g$ be $C_w:=(I-w)C_{\omega}$ (where $I$ is the identity element in $G$). One has the decomposition
 $$C_R=\displaystyle \bigsqcup_{w\in W} C_w$$ 
\end{theorem}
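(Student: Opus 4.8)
The plan is to verify three properties: (i) each cone $C_w$ is contained in $C_R$; (ii) the cones have pairwise disjoint interiors; and (iii) together they cover $C_R$. Containment is the classical part. Writing $\lambda = \sum_i x_i \omega_i$ with all $x_i > 0$, it suffices to show that $\lambda - w\lambda$ lies in the positive root cone. I would prove this by induction on $\ell(w)$: choosing a reduced expression and writing $w = s_i w'$ with $\ell(w) = \ell(w') + 1$, one has $w\lambda = w'\lambda - \langle w'\lambda, \alpha_i^\vee\rangle\,\alpha_i$, where $\langle w'\lambda, \alpha_i^\vee\rangle = \langle \lambda, (w')^{-1}\alpha_i^\vee\rangle \ge 0$ because $(w')^{-1}\alpha_i$ is a positive root and $\lambda$ is dominant. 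Thus $\lambda - w\lambda = (\lambda - w'\lambda) + \langle w'\lambda,\alpha_i^\vee\rangle\,\alpha_i$ adds only a non-negative multiple of a simple root at each step, giving $C_w \subseteq C_R$ for every $w$, with $C_e = \{0\}$.

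The heart of the argument is (ii) and (iii), which I would establish together by a stratified, dimension-counting scheme. First I would separate the top-dimensional cones: $C_w$ is full-dimensional exactly when $I-w$ is invertible, i.e. when $w$ has no nonzero fixed vector in $V$ (in type A these are the $n$-cycles, of which there are $(n-1)!$). For such $w$, $I-w$ is a linear isomorphism and $C_w$ is an open simplicial cone with extreme rays $\mathbb{R}_{>0}(I-w)\omega_i$. I would then run a local wall-crossing analysis: the facets of $C_w$ are the images of the facets of $\overline{C_\omega}$, and I expect that crossing a generic facet transversally corresponds to composing $w$ with a reflection, so that a point passes from $C_w$ into exactly one neighbouring top cone $C_{w'}$. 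This local ``exactly one neighbour'' statement is what forces the interiors to be disjoint and their union to be locally everything-or-nothing.

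To turn the local picture into the global partition I would use a solid-angle count. Let $\angle(\cdot)$ denote the normalized spherical measure of a cone at the origin. Disjointness of interiors gives $\sum_{w:\,\det(I-w)\neq 0}\angle(C_w) \le \angle(C_R)$, and I would prove the reverse inequality by an independent evaluation of the left-hand sum, so that equality holds and the top cones cover $C_R$ up to a set of measure zero. The remaining measure-zero set consists of the lower-dimensional cones, which I would handle by induction on the rank: each wall of $C_R$ is a facet associated to a maximal parabolic (Levi) subgroup $W_J$, and the restriction of the family $\{C_w\}$ to that wall reproduces, after identifying the fixed space of the relevant elements, the Waldspurger decomposition for $W_J$ acting on a lower-dimensional space. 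The inductive hypothesis tiles each stratum exactly once, and the strata fit together along the wall structure of $C_R$.

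The main obstacle is precisely the passage from local to global in (ii)--(iii): ruling out both overlaps and gaps. The wall-crossing statement is intuitive but its proof requires a careful analysis of how $I-w$ deforms the facet normals of $\overline{C_\omega}$, and showing that the neighbouring element is again one for which $I-w'$ is invertible (rather than dropping into a degenerate cone) is delicate. I expect the solid-angle identity to be the cleanest way to certify that no region of $C_R$ is left uncovered, while the inductive parabolic reduction is what guarantees the boundary strata are covered exactly once and seals the disjointness along the walls.
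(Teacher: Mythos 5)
The paper does not prove this statement; it is quoted from Waldspurger's paper \cite{waldspurger2007remarque} as motivation, so there is no internal proof to compare against. Judged on its own, your proposal correctly handles the easy third of the theorem: the inductive argument that $\lambda - w\lambda$ accumulates non-negative multiples of simple roots along a reduced word (using $\ell(s_iw')>\ell(w') \iff (w')^{-1}\alpha_i>0$) is a complete and standard proof that $C_w\subseteq C_R$. You have also correctly identified the adjacency structure: since $s_i\omega_j=\omega_j$ for $j\neq i$, the cones $C_w$ and $C_{ws_i}$ genuinely share the facet spanned by the $(I-w)\omega_j$, $j\neq i$.

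The rest is a plan rather than a proof, and two of its load-bearing steps are defective. First, the solid-angle argument is circular as stated: the inequality $\sum_w\angle(C_w)\le\angle(C_R)$ already presupposes the disjointness you are trying to establish, and the promised ``independent evaluation'' of $\sum_{\det(I-w)\neq 0}\angle(C_w)$ is never supplied --- there is no obvious closed form for this sum, and computing it is essentially equivalent to the theorem. The standard ways to globalize the local wall-crossing picture are a degree/covering argument (Meinrenken) or a direct order-theoretic argument (Waldspurger), not a measure count. Second, your treatment of the lower-dimensional cones is wrong in substance: they do not all lie on the walls of $C_R$, so parabolic induction on the facets of $C_R$ cannot account for them. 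Already in $A_2$ the reflection in the highest root $\theta=\alpha_1+\alpha_2$ gives $C_{s_\theta}=\mathbb{R}_{>0}\theta$, an open ray in the \emph{interior} of $C_R$, which is exactly the common facet of the two full-dimensional cones of the $3$-cycles (visible in Figure \ref{fig:a2wald}). The lower-dimensional cones tile the entire interior wall complex of the decomposition, not just $\partial C_R$, and any correct proof must say which element $w$ owns each such interior wall; your scheme as written leaves those strata uncovered.
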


\begin{figure}
\centering
\includegraphics[scale=.7]{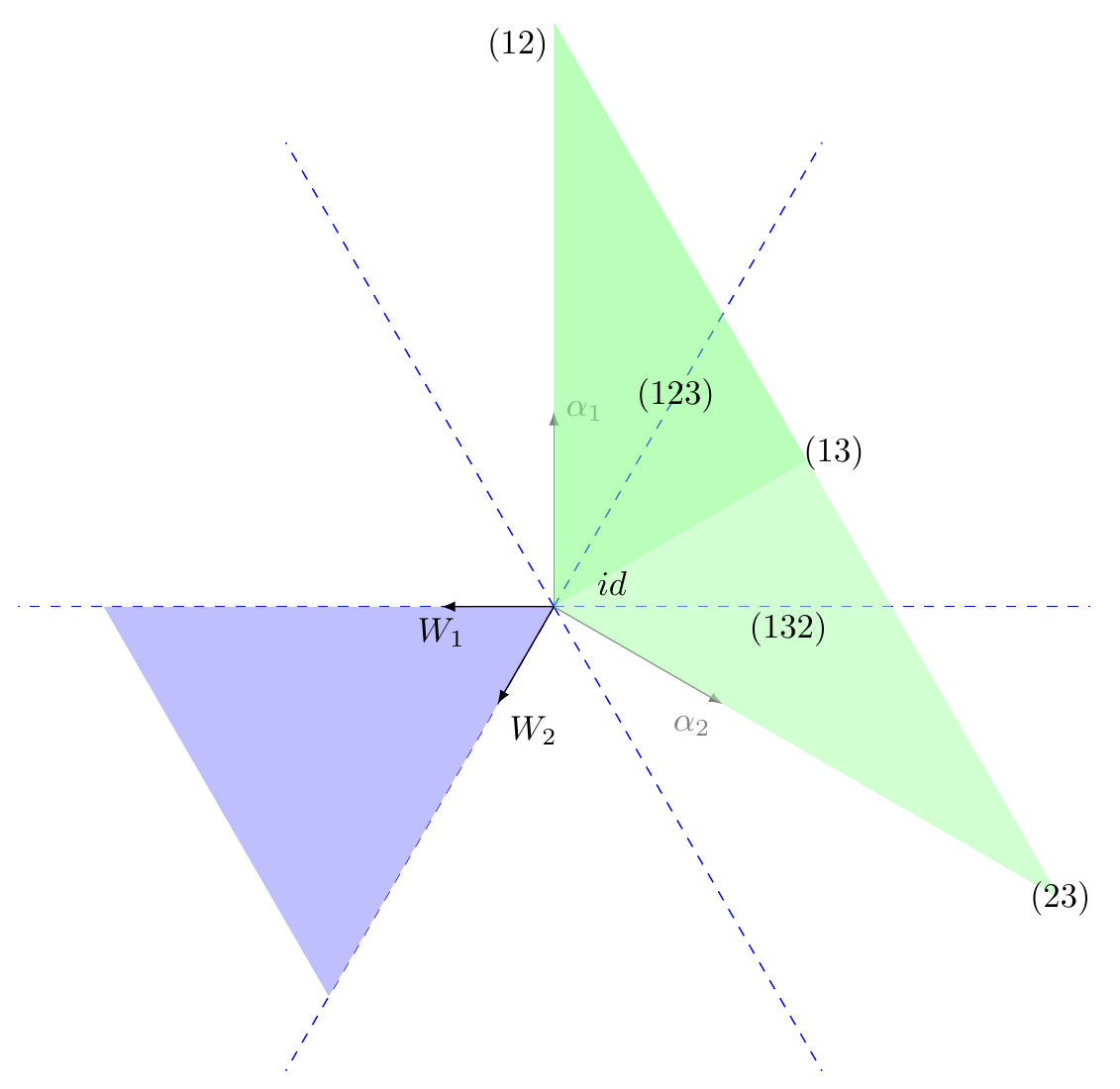}
\caption{The Waldspurger Decomposition for $A_2=\mathfrak{S}_3$}
\label{fig:a2wald}
\end{figure}

\begin{theorem}{E. Meinrenken, 2006} \cite{meinrenken2009tilings}\cite{bibikov2009tilings}

Let the affine Weyl group for a crystallographic Coxeter system be denoted $W^a$ and recall that
$W^a= \Lambda\rtimes W$ where the coroot lattice $\Lambda \subset V$ acts by translations. Let $A \subset C$ denote the Weyl
alcove, with $0 \in A$.  Then the images $V_w = (id-w)A$, $w \in W^a$ are all disjoint, and their union is all of $V$.
That is, $$V=\bigsqcup_{w\in W^a} V_w$$
\end{theorem}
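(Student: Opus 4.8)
The plan is to realize the asserted tiling as the image of a single \emph{folding map} onto the fundamental alcove, and then to split the two things one must prove --- that the pieces $V_w$ cover $V$ and that they overlap only in a set of measure zero --- into an elementary fixed-point argument and a volume count. First I would introduce the retraction $\rho\colon V\to A$ sending each point to the unique representative of its $W^a$-orbit in the closed alcove $A$; on the alcove $wA$ (and the $\{wA\}_{w\in W^a}$ tile $V$ because $A$ is a fundamental domain) it is the affine map $\rho|_{wA}=w^{-1}$, and it is continuous since across a shared wall the two local formulas differ by the wall reflection, which fixes that wall. Setting $\Phi(x):=\rho(x)-x$, a direct computation gives $\Phi(wA)=\{z-w\cdot z:z\in A\}=(\mathrm{id}-w)A=V_w$, so that $\bigcup_{w\in W^a}V_w=\Phi(V)$ and, writing $w=t_\lambda u$ with $u\in W,\ \lambda\in\Lambda$, the full-dimensional tiles are exactly the $\Lambda$-translates $(\mathrm{id}-u)A-\lambda$ of the finite Waldspurger pieces. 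The theorem is thus equivalent to the statement that $\Phi$ is surjective and hits a generic point exactly once.

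Surjectivity I would obtain from Brouwer's theorem: given $v\in V$, the map $G_v(x):=\rho(x)-v$ is continuous and sends the compact convex set $A-v$ into itself (because $\rho(V)\subseteq A$), hence has a fixed point $x^\ast$, and $\rho(x^\ast)-x^\ast=v$ exhibits $v\in V_w$ for the $w$ with $x^\ast\in wA$. Therefore $\bigcup_w V_w=V$, i.e. $\sum_w\mathbf 1_{V_w}\ge 1$ almost everywhere.

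For disjointness I would argue by volume. Each elliptic $u\in W$ (those with no eigenvalue $1$) has $\det(\mathrm{id}-u)>0$, since the orthogonal map $u$ contributes the positive factors $2$ from each eigenvalue $-1$ and $|1-e^{i\theta}|^2$ from each conjugate pair $e^{\pm i\theta}$, while non-elliptic $u$ give $\det(\mathrm{id}-u)=0$ and a measure-zero tile; thus $\operatorname{vol}V_w=\det(\mathrm{id}-u)\operatorname{vol}A$. The key identity is $\sum_{u\in W}\det(\mathrm{id}-u)=|W|$, which I would prove from $\det(\mathrm{id}-u)=\sum_k(-1)^k\operatorname{tr}\!\bigl(\textstyle\bigwedge^k u\bigr)$ by averaging over $W$, since for the essential reflection representation $\bigl(\bigwedge^k V\bigr)^W=0$ for every $k\ge 1$, leaving only the $k=0$ term. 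Together with $\operatorname{vol}A=\operatorname{covol}(\Lambda)/|W|$ this gives $\operatorname{vol}\bigl(\bigsqcup_{u\in W}(\mathrm{id}-u)A\bigr)=\operatorname{covol}(\Lambda)$, and integrating $\sum_w\mathbf 1_{V_w}\ge 1$ over the torus $V/\Lambda$ then forces $\sum_w\mathbf 1_{V_w}=1$ almost everywhere, which is the essential tiling. As a cross-check, Waldspurger's theorem applied to the tangent cone $\overline{C_\omega}$ of $A$ at the origin already yields disjointness of the same-$\lambda$ pieces directly.

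The hard part is everything compressed into the previous paragraph. The genuinely new content beyond Waldspurger is the interaction between his finite cone-tiling and the lattice translations --- the \emph{cross}-$\lambda$ disjointness --- and the cleanest leverage I have on it is the volume identity combined with the integration argument, so establishing $\sum_{u\in W}\det(\mathrm{id}-u)=|W|$ cleanly (equivalently, the vanishing of the higher exterior invariants) is where I expect the real work to lie. A second, more cosmetic obstacle is that the theorem asserts the $V_w$ are \emph{literally} disjoint, whereas the degenerate tiles sit on the boundaries of the full-dimensional ones --- for instance $V_{t_\lambda}=\{-\lambda\}$ is a shared lattice vertex --- so to upgrade the almost-everywhere tiling to a true disjoint union one must rerun the entire argument with a half-open fundamental alcove, assigning each wall to exactly one of the alcoves meeting it.
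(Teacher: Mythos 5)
First, a point of reference: the paper does not prove this theorem at all --- it is quoted as background from Meinrenken and from Bibikov--Zhgoon --- so there is no internal proof to compare against. Judged on its own terms, much of your outline is sound and genuinely close in spirit to the published arguments. The folding map $\rho$ and the piecewise-affine map $\Phi=\rho-\mathrm{id}$ with $\Phi(w\bar A)=(\mathrm{id}-w)\bar A$ is exactly the right object; the Brouwer fixed-point argument correctly shows the closed tiles cover $V$; and the identity $\sum_{u\in W}\det(\mathrm{id}-u)=|W|$ (true, via Solomon's formula or the vanishing of $(\bigwedge^k V)^W$ for $k\ge 1$) together with $\mathrm{vol}(A)=\mathrm{covol}(\Lambda)/|W|$ does force the full-dimensional tiles to be pairwise disjoint, since those tiles are open (the linear part $\mathrm{id}-u$ is invertible for elliptic $u$) and a nonempty overlap of open sets has positive measure, contradicting your integral identity on $V/\Lambda$.

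The gap is in what you dismiss as cosmetic. Your argument proves the almost-everywhere statement: open full-dimensional tiles pairwise disjoint with co-null union, closed tiles covering everything. The theorem is sharper, and the boundary conventions are its whole point: with the closed alcove the disjointness claim is literally false (e.g.\ $0\in(\mathrm{id}-u)\bar A$ for every $u\in W$, so the paper's ``$0\in A$'' must be read as $0\in\bar A$), so $A$ must be the open alcove, and then the degenerate tiles $V_w$ for $w$ with non-elliptic linear part --- lower-dimensional images of the open alcove, down to the points $V_{t_\lambda}=\{-\lambda\}$ --- must exactly and disjointly fill the closed measure-zero set missed by the full-dimensional tiles. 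Your proposed fix of rerunning the argument with a half-open alcove points in the wrong direction: no assignment of walls to alcoves makes the full-dimensional tiles alone cover $V$ (they cannot reach $-\lambda$), and it does nothing to separate the degenerate tiles from one another or from the boundaries of the open ones. This boundary stratification is the actual crux of the published proofs: one analyzes $\Phi$ near a point with nontrivial stabilizer $W^a_x$ (a finite reflection group) and reduces the local picture to Waldspurger's finite theorem for that stabilizer, inducting over strata. Without that step you have the ``essential tiling,'' not the theorem as stated.
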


\begin{figure}
\centering
\includegraphics[scale=.7]{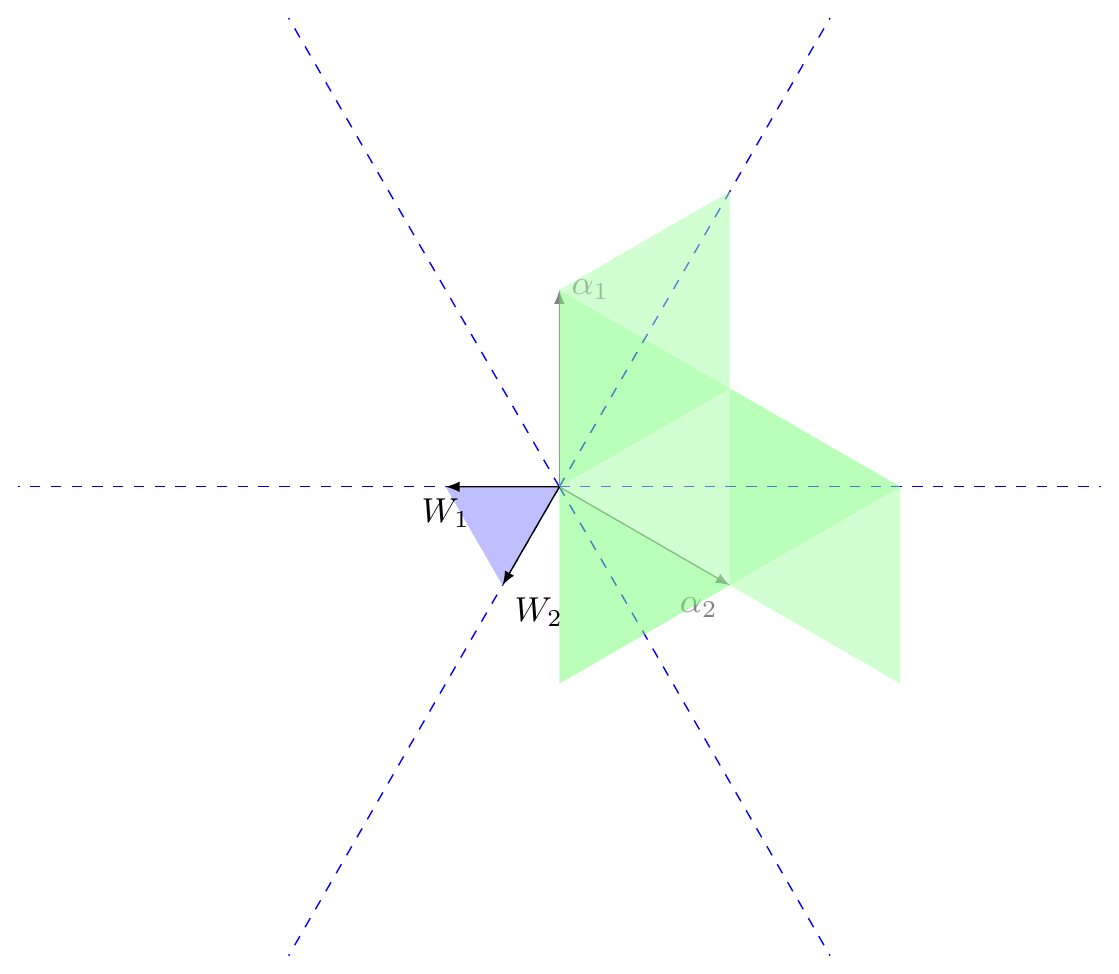}
\caption{The Meinrenken tiling for $A_2=\mathfrak{S}_3$.}
\label{fig:a2mein}
\end{figure}
We will define the \textbf{Meinrenken tile} to be $\displaystyle\bigsqcup_{w\in W} V_w$, restricting to a copy of the finite Weyl group inside of the affine Weyl group.  The semi-direct product of this finite Weyl group with the coroot lattice simply translates the Meinrenken tile and so this restriction is convenient from a combinatorial perspective.  Although it is built out of simplices, the Meinrenken tile is not a simplicial complex, nor even a CW complex, and it need not even be convex.

In type A, where our Weyl group elements are permutations, both $C_{\pi}=C_w$ and $V_{\pi}=V_w$ can be found in a purely combinatorial way by considering what we call the \textit{Waldspurger transform of the permutation} $\pi$, $\mathbf{WT}(\pi)$.  $\mathbf{WT}(\pi)$ is an $(n-1) \times (n-1)$ matrix constructed from the $n \times n$ permutation matrix via a \emph{transformation diagram} like the one at the top of Section \ref{fund trans section}  Section \ref{Section 2} is dedicated to defining the Waldspurger transform of a permutation, giving a combinatorial description of the transform, and verifying that the combinatorial description agrees with the definition.  The proof of Theorem \ref{wald transform thm} is fundamental, but not very illuminative, and the impatient reader is invited to skip to Section \ref{fund trans section}.  The remainder of the paper is organized as follows:

In Section \ref{geom obs section}  we informally discuss the geometry of the Meinrenken tile, particularly its symmetries and irregularities.  In Section \ref{um vectors section} we return to the combinatorics of Waldspurger matrices. We classify their row and column vectors by showing that they satisfy certain unimodality conductions and call them ``UM vectors''. We give explicit bijections between UM vectors and Unimodal Motzkin paths, Abelian ideals in the Lie algebra, $\mathfrak{sl}_n(\mathfrak{C})$, tableau with bounded hook lengths, and coroots in a certain polytope studied by Panyushev, Peterson, and Kostant \cite{panyushev2011abelian}.  In section \ref{entropy,asm, gen trans section} we show that componentwise comparison of Waldspurger matrices is Bruhat order and that summing all of the entries of the matrix gives the rank of the corresponding permutation in the lattice of alternating sign matrices (or monotone triangles).  Inspired by this, we extend the Waldspurger transform to alternating sign matrices and exhibit a lattice isomorphism between these generalized Waldspurger matrices and monotone triangles.  This lattice is known to be distributive, with join-irreducibles the bigrassmannian elements; permutations with exactly one left descent and one right descent \cite{sch}.  We show that these correspond to the Waldspurger matrices which are determined by a single entry.  In Section \ref{types b&c section} we explore types B and C.  We define the Waldspurger transform with respect to any crystalographic root system $\Phi$ and exhibit a combinatorial means of computing type B and C Waldspurger matrices by folding centrally symmetric type A Waldspurger matrices.  Here row and column vectors are not in bijection with abelian ideals and componentwise comparison is no longer Bruhat order. It is known that Dedekind-MacNeille completion of Bruhat order is still distributive for types B and C, and that the join-irreducibles are a strict subset of the bigrasmannian elements.  Various descriptions of the join irreducibles have been given in \cite{sch}\cite{geck1997bases} \cite{Reading2002}\cite{2016arXiv161208670A}.  Nevertheless, we present a conjectural description for these join-irreducibles: they correspond to the type C Waldspurger matrices specified by a single entry.
\section{The Waldspurger Transform for Permutations}\label{Section 2}

\begin{definition} Let $\phi$ denote the reflection representation of the symmetric group $$\phi:\mathfrak{S}_n \longrightarrow GL_{n-1}(\mathbb{R})$$ The Waldspurger matrix, $\mathbf{WT}(g)$, of a permutation $g$ is the matrix of $\phi(1)-\phi(g)$ applied to the matrix with columns given by the fundamental weights, expressed in root coordinates. 
\end{definition}

Our first theorem gives a concrete combinatorial method of finding the Waldspurger matrix associated with a given permutation.  It is helpful to consider the Cartan Matrix of the type A root system.
The \textit{Cartan matrix} of a root system is the matrix whose elements are the scalar products
$$a_{{ij}}=2 \frac {(r_{i},r_{j}) }{(r_{i},r_{i})}$$
(sometimes called the Cartan integers) where the $r_i$'s are the simple roots.  Recall that the root system $A_{n-1}$ has as its simple roots the vectors $a_i=e_i-e_{i+1}$ for $i=1,\dots , n-1$.  One can verify that the Cartan matrix for this root system has $2$'s on the main diagonal, $(-1)$'s on the superdiagonal and subdiagonal, and $0$s elsewhere.  Its columns express the simple roots in the basis of fundamental weights.
\begin{theorem} \label{wald transform thm}
Let $P$ be the $(n-1) \times (n-1)$ matrix for the permutation $\pi \in\ \mathfrak{S}_n$ expressed in root coordinates. Let $C$ be the $(n-1) \times (n-1)$ Cartan matrix and let $D$ be the $(n-1) \times (n-1)$ matrix

$$D_{i,j}= \begin{cases} 
      \displaystyle\sum_{\substack{a\leq i\\b>j}}\pi_{a,b} & i\leq j \\
      \displaystyle\sum_{\substack{a> i\\b\leq j}}\pi_{a,b} & i\geq j\\ 
   \end{cases}.
$$ 
Then we have that $\mathbf{(I-P)=DC}$.
\end{theorem}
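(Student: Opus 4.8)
The plan is to verify the matrix identity $(I-P)=DC$ directly, entry by entry, after first pinning down the two matrices $P$ and $C^{-1}$ explicitly. First I would record the role of the Cartan matrix: since the columns of $C$ express the simple roots in the fundamental-weight basis, $C$ is exactly the change-of-basis matrix sending root coordinates to weight coordinates, so the matrix whose columns are the fundamental weights written in root coordinates is $C^{-1}$. Thus $\mathbf{WT}(\pi)=(\phi(1)-\phi(\pi))C^{-1}=(I-P)C^{-1}$, and proving $(I-P)=DC$ is the same as identifying $D$ with $\mathbf{WT}(\pi)$. Because the Cartan matrix is tridiagonal with $2$ on the diagonal and $-1$ on the two adjacent diagonals, right multiplication by $C$ is a discrete second difference in the column index:
$$(DC)_{ij} = 2D_{ij} - D_{i,j-1} - D_{i,j+1},$$
with the boundary convention $D_{i,0}=D_{i,n}=0$ (automatic, since $C$ is $(n-1)\times(n-1)$). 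So the whole theorem reduces to checking $2D_{ij}-D_{i,j-1}-D_{i,j+1}=(I-P)_{ij}$ for all $i,j$.

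Next I would compute the entries of $P$. Writing the permutation action as $e_k\mapsto e_{\pi(k)}$ and using $\alpha_j=e_j-e_{j+1}$, the image of $\alpha_j$ is $e_{\pi(j)}-e_{\pi(j+1)}$; expanding $e_p-e_q$ in the simple roots by the telescoping sum $e_p-e_q=\sum_{p\le k<q}\alpha_k$ (with the opposite sign when $p>q$) shows that its $i$-th root coordinate is $P_{ij}=[\,i\ge\pi(j)\,]-[\,i\ge\pi(j+1)\,]$, where $[\,\cdot\,]$ denotes the indicator ($1$ if true, $0$ otherwise). The same expansion applied to the identity gives $I_{ij}=[\,i\ge j\,]-[\,i\ge j+1\,]$, so that
$$(I-P)_{ij}=[\,i\ge j\,]-[\,i\ge j+1\,]-[\,i\ge\pi(j)\,]+[\,i\ge\pi(j+1)\,].$$

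Then I would evaluate the second difference of $D$ from its combinatorial definition, reading the two cases as block counts of the ones of the permutation matrix: the lower-left block (rows $>i$, columns $\le j$) when $i\ge j$, and the upper-right block (rows $\le i$, columns $>j$) when $i\le j$. Away from the diagonal both $D_{i,j-1}$ and $D_{i,j+1}$ use the same branch as $D_{ij}$, so the first difference $D_{ij}-D_{i,j-1}$ collapses to the single column that changes, and the second difference telescopes to a difference of indicator terms in $\pi(j)$ and $\pi(j+1)$, matching the display above after the substitution $[\,\pi(b)>i\,]=1-[\,i\ge\pi(b)\,]$; the contribution $[\,i\ge j\,]-[\,i\ge j+1\,]=\delta_{ij}$ then appears precisely on the diagonal, which is where it is nonzero.

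The step I expect to be the main obstacle is the bookkeeping at the diagonal $i=j$, the one place where the second difference mixes the two branches of the definition: $D_{i,i-1}$ is computed by the lower-left formula while $D_{i,i+1}$ is computed by the upper-right formula. Reconciling them requires the identity $\#\{b\le i:\pi(b)>i\}=\#\{b>i:\pi(b)\le i\}$ (equivalently, that the upper-right and lower-left block counts differ by exactly $i-j$), which is nothing but the statement that each row and each column of a permutation matrix sums to $1$; this is also what makes $D$ well defined on the diagonal in the first place. Once this crossing case, together with the two edge columns $j=1$ and $j=n-1$ handled by the convention $D_{i,0}=D_{i,n}=0$, has been checked, the remaining cases are routine one-line telescopings and the identity $(I-P)=DC$ follows.
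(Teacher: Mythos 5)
Your proposal is correct, but it takes a genuinely different route from the paper's. The paper factors the Cartan matrix as $C=A^{T}A$, where $A$ is the $n\times(n-1)$ matrix whose columns are the simple roots in standard coordinates, and uses the intertwining relation $AP=\pi A$ to lift the claim to the equivalent $n\times n$ identity $\pi=I-ADA^{T}$; that identity is then checked entrywise, since $(ADA^{T})_{i,j}=D_{i,j}-D_{i-1,j}-D_{i,j-1}+D_{i-1,j-1}$ is a second difference in \emph{both} indices whose value is read off as a signed count of $1$'s in four overlapping quadrants of the permutation matrix (three cases: $i=j$, $i<j$, $i>j$). You instead stay entirely in the $(n-1)$-dimensional root space: you compute $P$ explicitly by telescoping $e_{p}-e_{q}=\sum_{p\le k<q}\alpha_{k}$ to get $P_{ij}=[\,i\ge\pi(j)\,]-[\,i\ge\pi(j+1)\,]$, and match this against the three-term column-index difference $2D_{ij}-D_{i,j-1}-D_{i,j+1}$. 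I have checked your three cases and the boundary columns; they all go through, and your isolation of the crossing case $i=j$ --- the one place the two branches of $D$ interact, reconciled by $\#\{b\le i:\pi(b)>i\}=\#\{b>i:\pi(b)\le i\}$ --- is exactly the right pressure point (the paper needs the same fact implicitly just to make $D$ well defined on the diagonal). What the paper's lift buys is that it never computes $P$: only $AP=\pi A$ is used, and the verification becomes a clean picture of moving quadrant sums. What your version buys is a shorter, self-contained computation with a single difference rather than a double one, and an explicit formula for $P$ that makes the appearance of $\pi(j)$ and $\pi(j+1)$ transparent. One incidental advantage of your formulation: the paper's forward step of ``canceling the $A$'s on the right'' in $\pi A=(I-ADA^{T})A$ is not literally valid (the columns of $A$ span only a hyperplane, so $A$ is left- but not right-cancellable); the paper repairs this by verifying $\pi=I-ADA^{T}$ directly and reading the chain backwards, whereas your argument never leaves the root space and so avoids the issue altogether.
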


\begin{proof}
We use the fact that $C=A^TA$ where $A$ is the $n \times (n-1)$ matrix $$A=\begin{pmatrix*} 1 & 0 & 0 & \dots & 0\\ 
-1 & 1 & 0 & \dots & 0\\
0 & -1 & 1 & \dots & 0\\
\vdots & \vdots & \vdots & \ddots & \vdots\\
0 & 0 & 0 & \dots & 1\\
0 & 0 & 0 & \dots & -1\end{pmatrix*}$$

to rewrite the conclusion :
$$P = I-DA^TA$$
We multiply both sides on the left by A:
$$AP = A-ADA^TA$$
We then make the observation that $AP=\pi A$.  Making this substitution and canceling the $A$'s on the right we obtain:
$$ \pi =I-ADA^T$$
This we will verify.
\newline

Multiplying $A$ and $D$, we see that $(AD)_{i,j}=D_{i,j}-D_{i-1,j}$ with the understanding $D_{0,k}:=0$ for all $k$.
One more multiplication gives us that $$(ADA^T)_{i,j}=D_{i,j}-D_{i-1,j}-D_{i,j-1}+D_{i-1,j-1}$$
once again, with the understanding that if either $i=0$ or $j=0$ then $D_{i,j}:=0$

\begin{case}
If $i=j$ then
\begin{align*} (ADA^T)_{i,j} &= D_{i,j}-D_{i-1,j}-D_{i,j-1}+D_{i-1,j-1} \\
&=  \sum_{\substack{a\leq i\\b>j}}\pi_{a,b} - \sum_{\substack{a\leq i-1\\b>j}}\pi_{a,b}- \sum_{\substack{a> i\\b\leq j-1}}\pi_{a,b}+ \sum_{\substack{a> i-1\\b\leq j-1}}\pi_{a,b}\\
&= \sum_{k \neq j} \pi_{i,k}\\
&= \begin{cases} 0 & \pi_{i,j}=1\\ 1 & \pi_{i,j}=0 \end{cases}
\end{align*}
\textrm{To understand the second-to-last inequality, observe that we are summing over the following terms of permutation matrices:}
\bigbreak
\resizebox{.25\textwidth}{!}{$\begin{pmatrix} 
\ddots & \vdots & \multicolumn{1}{c|}{\vdots} & \vdots & \udots  \\ 
\dots & \pi_{i-1,j-1} &\multicolumn{1}{c|}{ \pi_{i-1,j}} & \pi_{i-1,j+1} & \dots \\ 
\dots & \pi_{i,j-1} & \multicolumn{1}{c|}{\pi_{i,j}} & \pi_{i,j+1} & \dots \\ \cmidrule{4-5} 
\dots & \pi_{i+1,j-1} & \pi_{i+1,j} & \pi_{i+1,j+1} & \dots \\
\udots & \vdots     & \vdots    & \vdots      & \ddots \end{pmatrix}$}-
\resizebox{.25\textwidth}{!}{$\begin{pmatrix} 
\ddots & \vdots & \multicolumn{1}{c|}{\vdots} & \vdots & \udots  \\ 
\dots & \pi_{i-1,j-1} &\multicolumn{1}{c|}{ \pi_{i-1,j}} & \pi_{i-1,j+1} & \dots \\ \cmidrule{4-5}
\dots & \pi_{i,j-1} & \pi_{i,j} & \pi_{i,j+1} & \dots \\  
\dots & \pi_{i+1,j-1} & \pi_{i+1,j} & \pi_{i+1,j+1} & \dots \\
\udots & \vdots     & \vdots    & \vdots      & \ddots \end{pmatrix}$}-
\resizebox{.25\textwidth}{!}{$\begin{pmatrix} 
\ddots & \vdots & \vdots & \vdots & \udots  \\ 
\dots & \pi_{i-1,j-1} & \pi_{i-1,j} & \pi_{i-1,j+1} & \dots \\ 
\dots & \pi_{i,j-1} & \pi_{i,j} & \pi_{i,j+1} & \dots \\ \cmidrule{1-2} 
\dots & \pi_{i+1,j-1} & \multicolumn{1}{|c}{\pi_{i+1,j}} & \pi_{i+1,j+1} & \dots \\
\udots & \vdots     & \multicolumn{1}{|c}{\vdots}    & \vdots      & \ddots \end{pmatrix}$}+
\resizebox{.25\textwidth}{!}{$\begin{pmatrix} 
\ddots & \vdots & \vdots & \vdots & \udots  \\ 
\dots & \pi_{i-1,j-1} & \pi_{i-1,j} & \pi_{i-1,j+1} & \dots \\ \cmidrule{1-2}
\dots & \pi_{i,j-1} & \multicolumn{1}{|c}{\pi_{i,j}} & \pi_{i,j+1} & \dots \\  
\dots & \pi_{i+1,j-1} & \multicolumn{1}{|c}{\pi_{i+1,j}} & \pi_{i+1,j+1} & \dots \\
\udots & \vdots     & \multicolumn{1}{|c}{\vdots}    & \vdots      & \ddots \end{pmatrix}$}
\bigskip
Thus, $(I-ADA^T)_{i,j}=\pi_{i,j}$ for this case.
\end{case}
\bigskip

\begin{case}
If $i<j$ then
\begin{align*} (ADA^T)_{i,j} &= D_{i,j}-D_{i-1,j}-D_{i,j-1}+D_{i-1,j-1} \\
&=  \sum_{\substack{a\leq i\\b>j}}\pi_{a,b} - \sum_{\substack{a\leq i-1\\b>j}}\pi_{a,b}- \sum_{\substack{a\leq i\\b> j-1}}\pi_{a,b}+ \sum_{\substack{a\leq i-1\\b> j-1}}\pi_{a,b}\\
&= -\pi_{i,j}\\
\end{align*}
\textrm{This last equality is, again, easier to understand visually:}
\bigbreak
\resizebox{.25\textwidth}{!}{$\begin{pmatrix} 
\ddots & \vdots & \multicolumn{1}{c|}{\vdots} & \vdots & \udots  \\ 
\dots & \pi_{i-1,j-1} &\multicolumn{1}{c|}{ \pi_{i-1,j}} & \pi_{i-1,j+1} & \dots \\ 
\dots & \pi_{i,j-1} & \multicolumn{1}{c|}{\pi_{i,j}} & \pi_{i,j+1} & \dots \\ \cmidrule{4-5} 
\dots & \pi_{i+1,j-1} & \pi_{i+1,j} & \pi_{i+1,j+1} & \dots \\
\udots & \vdots     & \vdots    & \vdots      & \ddots \end{pmatrix}$}-
\resizebox{.25\textwidth}{!}{$\begin{pmatrix} 
\ddots & \vdots & \multicolumn{1}{c|}{\vdots} & \vdots & \udots  \\ 
\dots & \pi_{i-1,j-1} &\multicolumn{1}{c|}{ \pi_{i-1,j}} & \pi_{i-1,j+1} & \dots \\ \cmidrule{4-5}
\dots & \pi_{i,j-1} & \pi_{i,j} & \pi_{i,j+1} & \dots \\  
\dots & \pi_{i+1,j-1} & \pi_{i+1,j} & \pi_{i+1,j+1} & \dots \\
\udots & \vdots     & \vdots    & \vdots      & \ddots \end{pmatrix}$}-
\resizebox{.25\textwidth}{!}{$\begin{pmatrix} 
\ddots & \vdots & \multicolumn{1}{|c}{\vdots} & \vdots & \udots  \\ 
\dots & \pi_{i-1,j-1} &\multicolumn{1}{|c}{ \pi_{i-1,j}} & \pi_{i-1,j+1} & \dots \\ 
\dots & \pi_{i,j-1} & \multicolumn{1}{|c}{\pi_{i,j}} & \pi_{i,j+1} & \dots \\ \cmidrule{3-5} 
\dots & \pi_{i+1,j-1} & \pi_{i+1,j} & \pi_{i+1,j+1} & \dots \\
\udots & \vdots     & \vdots    & \vdots      & \ddots \end{pmatrix}$}+
\resizebox{.25\textwidth}{!}{$\begin{pmatrix} 
\ddots & \vdots & \multicolumn{1}{|c}{\vdots} & \vdots & \udots  \\ 
\dots & \pi_{i-1,j-1} &\multicolumn{1}{|c}{ \pi_{i-1,j}} & \pi_{i-1,j+1} & \dots \\ \cmidrule{3-5}
\dots & \pi_{i,j-1} & \pi_{i,j} & \pi_{i,j+1} & \dots \\  
\dots & \pi_{i+1,j-1} & \pi_{i+1,j} & \pi_{i+1,j+1} & \dots \\
\udots & \vdots     & \vdots    & \vdots      & \ddots \end{pmatrix}$}
\bigskip
Thus, $(I-ADA^T)_{i,j}=\pi_{i,j}$ for this case as well.
\end{case}
\bigskip

\begin{case}
If $i>j$ then
\begin{align*} (ADA^T)_{i,j} &= D_{i,j}-D_{i-1,j}-D_{i,j-1}+D_{i-1,j-1} \\
&=  \sum_{\substack{a> i\\b\leq j}}\pi_{a,b} - \sum_{\substack{a> i-1\\b\leq j}}\pi_{a,b}- \sum_{\substack{a> i\\b\leq j-1}}\pi_{a,b}+ \sum_{\substack{a> i-1\\b\leq j-1}}\pi_{a,b}\\
&= -\pi_{i,j}\\
\end{align*}
\textrm{As before, the final equality is apparent with a visual:}
\bigbreak
\resizebox{.25\textwidth}{!}{$\begin{pmatrix} 
\ddots & \vdots & \vdots & \vdots & \udots  \\ 
\dots & \pi_{i-1,j-1} & \pi_{i-1,j} & \pi_{i-1,j+1} & \dots \\ 
\dots & \pi_{i,j-1} & \pi_{i,j} & \pi_{i,j+1} & \dots \\ \cmidrule{1-3} 
\dots & \pi_{i+1,j-1} & \multicolumn{1}{c|}{\pi_{i+1,j}} & \pi_{i+1,j+1} & \dots \\
\udots & \vdots     & \multicolumn{1}{c|}{\vdots}    & \vdots      & \ddots \end{pmatrix}$}-
\resizebox{.25\textwidth}{!}{$ \begin{pmatrix} 
\ddots & \vdots & \vdots & \vdots & \udots  \\ 
\dots & \pi_{i-1,j-1} & \pi_{i-1,j} & \pi_{i-1,j+1} & \dots \\ \cmidrule{1-3}
\dots & \pi_{i,j-1} & \multicolumn{1}{c|}{\pi_{i,j}} & \pi_{i,j+1} & \dots \\  
\dots & \pi_{i+1,j-1} & \multicolumn{1}{c|}{\pi_{i+1,j}} & \pi_{i+1,j+1} & \dots \\
\udots & \vdots     & \multicolumn{1}{c|}{\vdots}    & \vdots      & \ddots \end{pmatrix}$
}-
\resizebox{.25\textwidth}{!}{$\begin{pmatrix} 
\ddots & \vdots & \vdots & \vdots & \udots  \\ 
\dots & \pi_{i-1,j-1} & \pi_{i-1,j} & \pi_{i-1,j+1} & \dots \\ 
\dots & \pi_{i,j-1} & \pi_{i,j} & \pi_{i,j+1} & \dots \\ \cmidrule{1-2} 
\dots & \pi_{i+1,j-1} & \multicolumn{1}{|c}{\pi_{i+1,j}} & \pi_{i+1,j+1} & \dots \\
\udots & \vdots     & \multicolumn{1}{|c}{\vdots}    & \vdots      & \ddots \end{pmatrix}$}+
\resizebox{.25\textwidth}{!}{$\begin{pmatrix} 
\ddots & \vdots & \vdots & \vdots & \udots  \\ 
\dots & \pi_{i-1,j-1} & \pi_{i-1,j} & \pi_{i-1,j+1} & \dots \\ \cmidrule{1-2}
\dots & \pi_{i,j-1} & \multicolumn{1}{|c}{\pi_{i,j}} & \pi_{i,j+1} & \dots \\  
\dots & \pi_{i+1,j-1} & \multicolumn{1}{|c}{\pi_{i+1,j}} & \pi_{i+1,j+1} & \dots \\
\udots & \vdots     & \multicolumn{1}{|c}{\vdots}    & \vdots      & \ddots \end{pmatrix}$}
\bigskip
Thus, $(I-ADA^T)_{i,j}=\pi_{i,j}$ in this final case.
\end{case}
\end{proof}
Because the inverse of the Cartan matrix expresses the fundamental weights in simple root coordinates, we may multiply both sides of the equation above on the right by $\mathbf{C}^{-1}$ and observe
$$\mathbf{D}=\mathbf{WT}(\pi).$$
\subsection{The Fundamental Transformation}\label{fund trans section}

Let $\pi \in \mathfrak{S}_n$ be expressed as an $n \times n$ permutation matrix. For aesthetics, our examples put the entries of $\pi$ on a grid, leave off the zeros, and use stars instead of ones.  Construct the $(n-1) \times (n-1)$ Waldspurger matrix $\mathbf{WT}(\pi)$ in the spaces between the entries of the permutation matrix as follows:
If an entry is on or above the main diagonal, count the number of stars above and to the right, and put that count in the space.  If the entry is on or below the main diagonal, count the number of stars below and to the left and put that count in the space.  Note that entries on the diagonal are still well-defined.
As an example, here is the Waldspurger matrix for the permutation $456213\in \mathfrak{S}_6$.

\begin{center}
\includegraphics[]{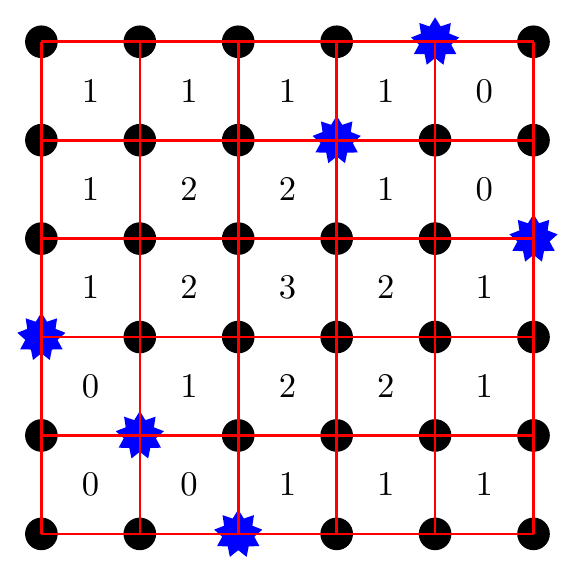}
\end{center}

Now suppose $M$ is a Waldspurger matrix for the permutation $\pi$, with columns $c_1,c_2,\dots,c_{n-1}$. To return to the language of the Waldspurger and Meinrenken theorems we have: 
\begin{equation}
C_M:=C_{\pi}=\left\{ \displaystyle\sum_{i=1}^{n-1} a_ic_i \vert \hspace{10pt} a_i\in \mathbb{R}_{\geq 0} \right\}
\end{equation}
\begin{equation}V_M:=V_{\pi}=\left\{\displaystyle\sum_{i=1}^{n-1} a_ic_i \vert \hspace{10pt} a_i \in \mathbb{R}_{\geq 0}\textrm{ and }\sum a_i\leq1 \right\}
\end{equation}

It is at times convenient to study the boundary of the Meinrenken tile, so we will also define
\begin{equation}\Delta_M:=\Delta_{\pi}:=\left\{\displaystyle\sum_{i=1}^{n-1} a_ic_i \vert \hspace{10pt}a_i \in \mathbb{R}_{\geq 0}\textrm{ and }\sum a_i=1\right\}
\end{equation}

\section{Geometric Observations}\label{geom obs section}
 Our first example, in Figures \ref{fig:a2wald} and \ref{fig:a2mein}, was in many ways too nice.  One may be tempted to study the Meinrenken tile or a slice of the root cone as a simplicial complex, or at the very least a regular CW complex.  Going up even one dimension presents several unforeseen complications.  For starters, our Meinrenken tile is no longer convex! See, for example, the right side of Figure \ref{meinzome}, constructed out of zometools. (The two yellow edges and one blue edge coming out from the origin are the fundamental weights.)
 \begin{figure}
 \begin{center}
 \begin{minipage}[t][][b]{.3\textwidth}
 \includegraphics[scale=1.5]{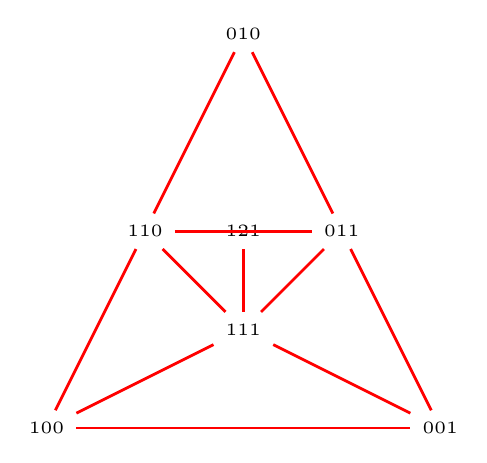}
 \end{minipage}
 \hspace{100pt}
 \begin{minipage}[t][][b]{.3\textwidth}
 \includegraphics[scale=.070]{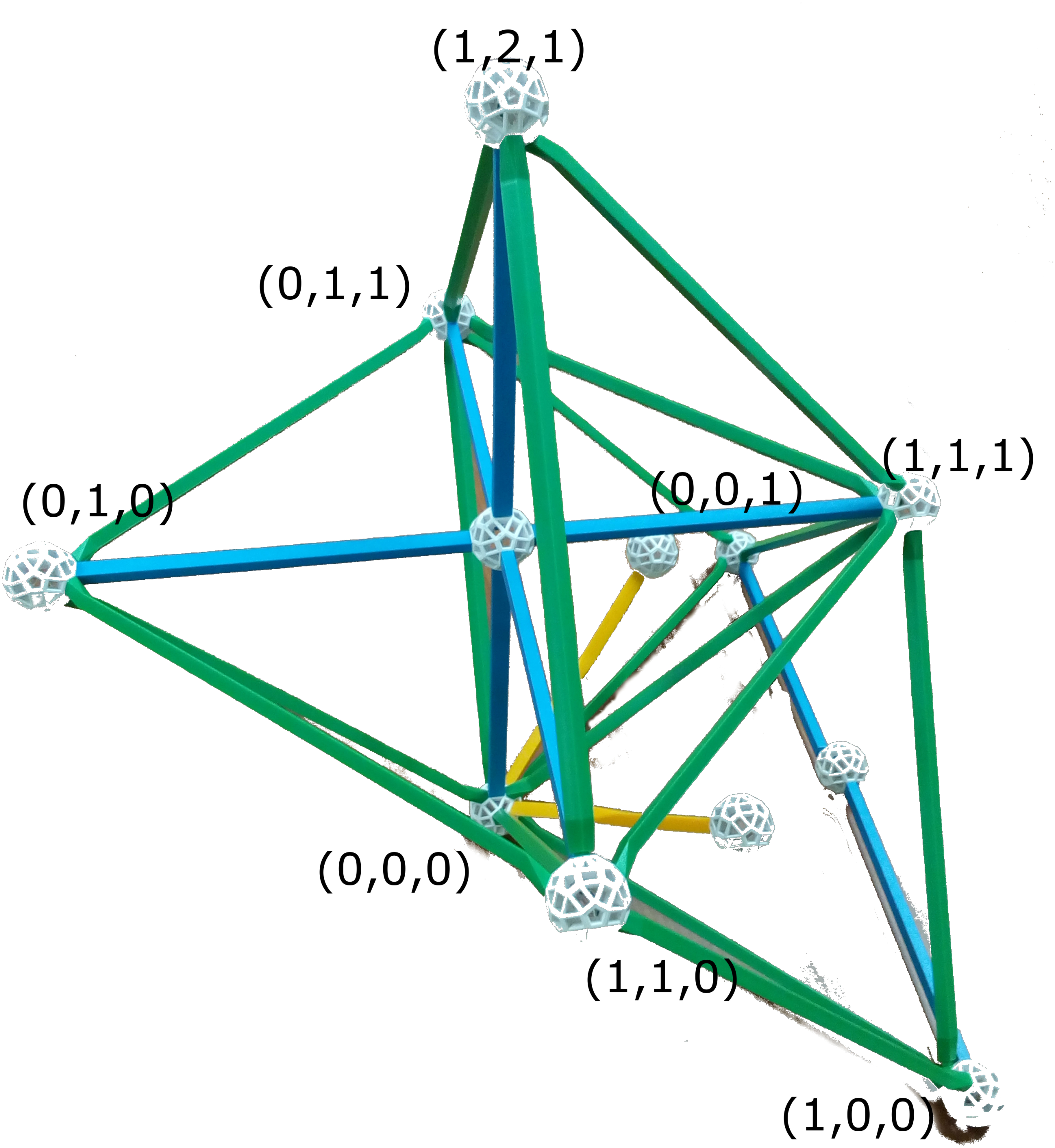}
 \end{minipage}
\end{center}
\label{meinzome}
\caption{A slice of the root cone of type $A_3$ with points labeled in root coordinates, along with the corresponding $A_3$ Meinrenken tile.}
  \end{figure}
 Observe from the left side of figure that the slice of the root cone fails to be simplicial or regular CW complex.  The top triangle intersects the two below it along ``half edges".  One may desire to consider it instead as a degenerate square to fix this impediment, but from the Meinrenken tile, it seems this new vertex should rightly be the fundamental weight with root coordinates $(\frac{1}{2},1,\frac{1}{2})$ and not the vertex $(1,2,1)$. If we wish to proceed in this manner, we must then include $(\frac{1}{2},1,\frac{1}{2})$ as a vertex for the two triangles 110,121,111 and 111,121,011 and consider them as degenerate tetrahedra.  This sort of topological completion via intersecting facets has proven to be a rabbit hole with less fruit than one might hope for.  Instead let us turn our attention back to the symmetric group, and consider Figure \ref{oneline}.
 
 \begin{figure}
 \begin{center} 
  \includegraphics[scale=.90]{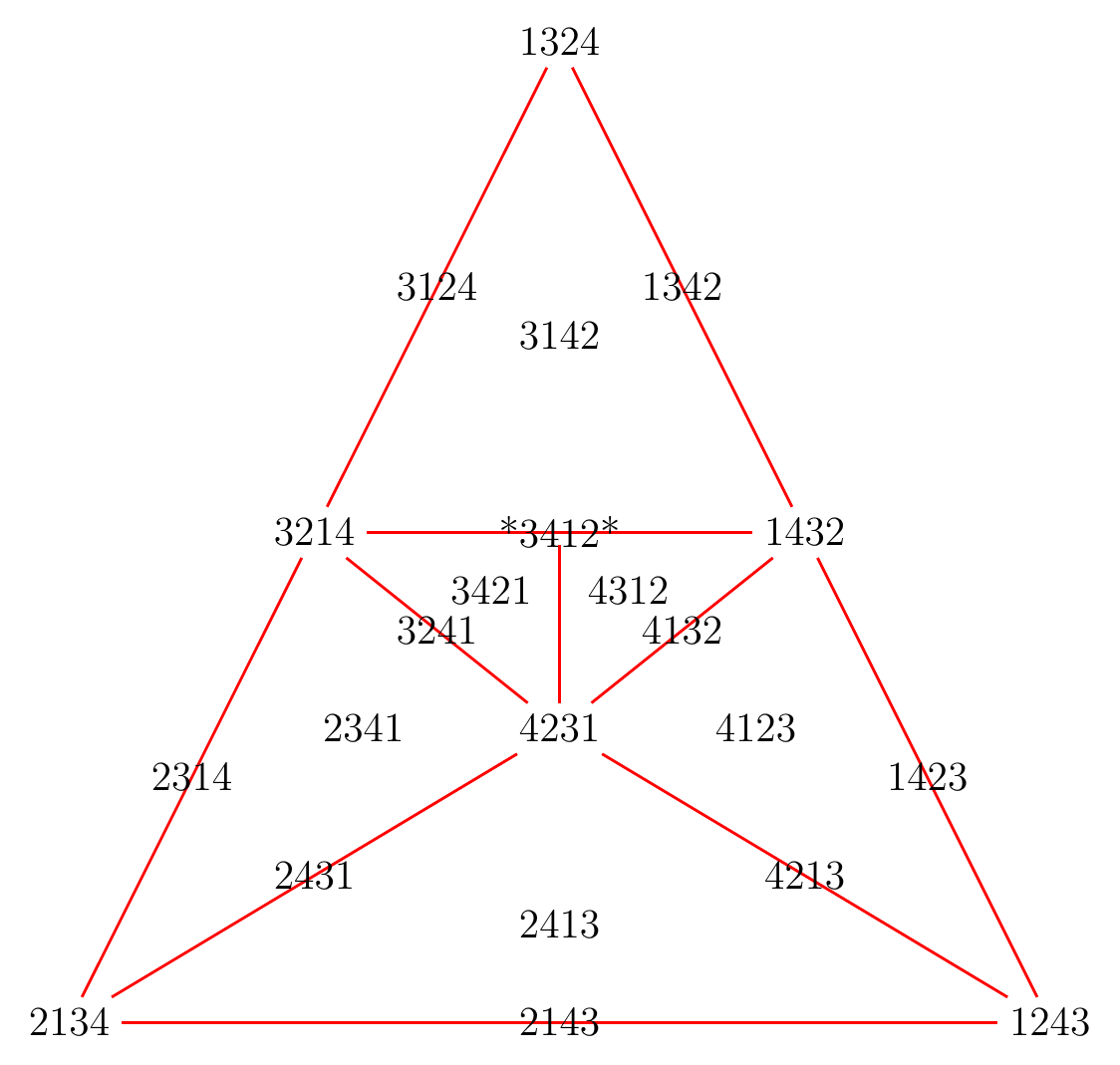}
\includegraphics[scale=.90]{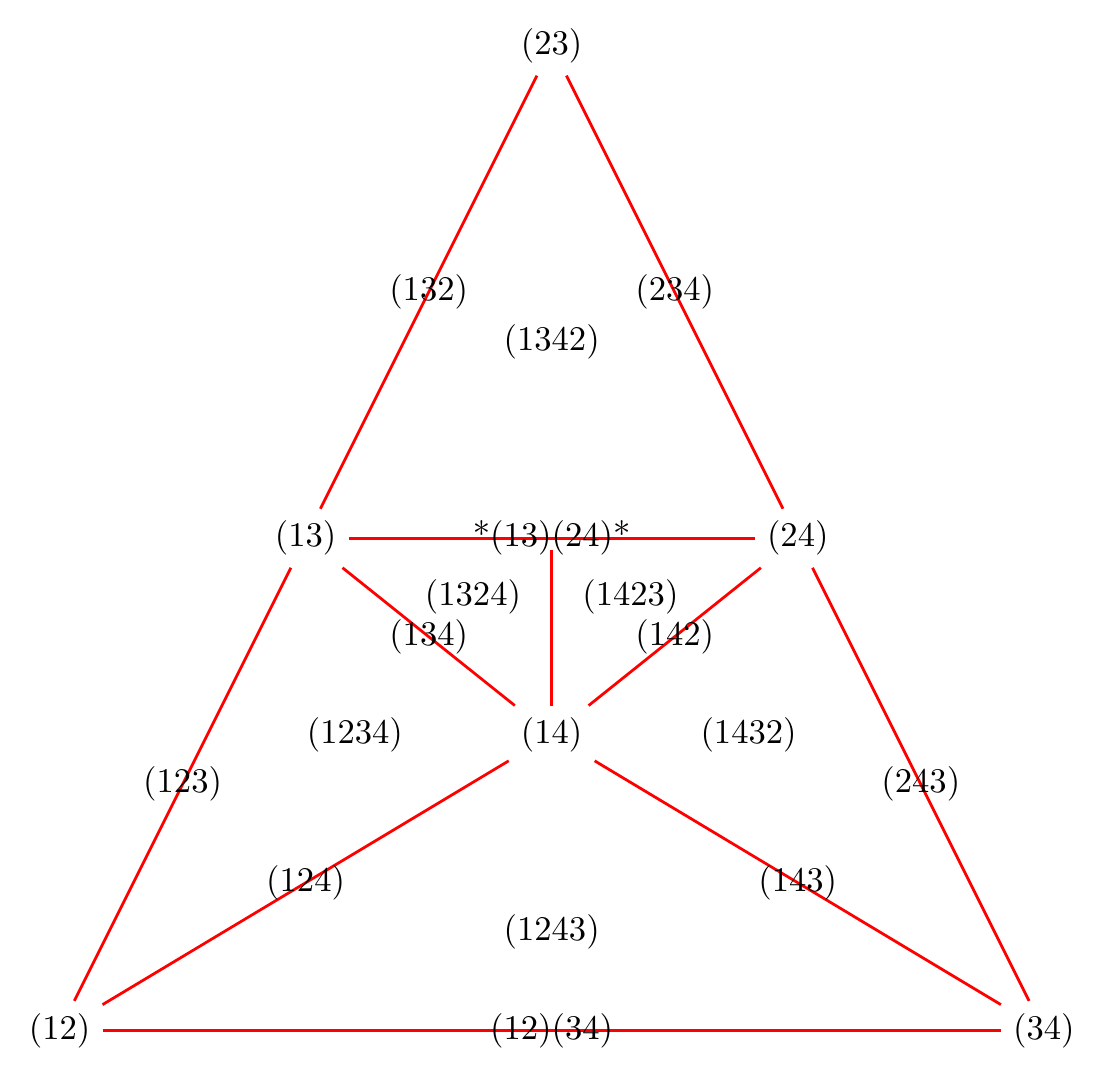}
 \caption{A slice of the Root cone $A_3=\mathfrak{S}_4$}
 \label{oneline}

 \end{center}
 \end{figure}
  Observe that the dimension of a simplex in the cone slice relates to the number of cycles (counting fixed points as one-cycles) of the corresponding permutation.  The four-cycles are the triangles, the three-cycles and disjoint two-cycles are the edges, and the transpositions are vertices.  This has been known for some time \cite{bibikov2009tilings} and can be seen as a corollary to the Chevalley-Shephard-Todd theorem \cite{Bourbaki:2008:LGL:1502204}.
 The astute observer will notice that there are two permutations missing in the picture.  The identity corresponds to the cone point which we cut off, and the vertical edge in the center we left unlabeled, as we feel that it (along with the starred edge $3412$) deserves some discussion.  It corresponds to the permutation $4321$ and its $3 \times 3$ Waldspurger matrix has all entries equal to one except for a two in the middle.  If we consider the columns of each Waldspurger matrix as being ordered from left to right, the cones in the Waldspurger decomposition are endowed with an orientation.  The orientation appears to be consistent, but what does it say in the case of this permutation?  It appears to go first up from $(1,1,1)$ to $(1,2,1)$ and then back down.  The starred edge, $3412$ is also worth mentioning.  Its Waldspurger matrix has first column $(1,1,0)$ second column $(1,2,1)$ and third column $(0,1,1)$ so it is perhaps better seen as a degenerate triangle than as an edge.  Looking at the Meinrenken tile, we see that $\Delta_{3,4,1,2}$ is actually a triangle.  The strangeness in the Meinrenken picture comes from the fact that $V_{3,4,1,2}$ is a square and not a tetrahedron.
 
 Despite all of these collapses in dimension, there is still a fair amount of symmetry in the Meinrenken tile.
 \begin{theorem}
Let $R$ denote reflection through the affine hyperplane orthogonal the longest positive root, $\theta$, at height one.  Then $R$ is an involution on the set of $\Delta_{\pi}$'s. At the level of permutations, this involution is just applying the transposition $(1,n)$ on the left. 
$$R( \Delta_{\pi})=\Delta_{(1,n)\pi}.$$

 In contrast, applying the transposition $(1,n)$ on the right is the gluing map for using multiple Meinrenken tiles to tile space.  The left right symmetry is conjugation by the longest element in the Coxeter group.

\end{theorem}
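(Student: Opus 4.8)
The plan is to argue intrinsically in the reflection representation $V$, since $R$ is a genuine geometric involution of $V$ and the choice of simple-root coordinates is irrelevant to it. First I would record the two facts that drive the computation. By Theorem~\ref{wald transform thm} the columns $c_i$ of $\mathbf{WT}(\pi)$ are the vectors $(I-\pi)\omega_i$, so as a subset of $V$ we may rewrite $\Delta_\pi = (I-\pi)\,\Delta_\omega$, where $\Delta_\omega := \mathrm{conv}(\omega_1,\dots,\omega_{n-1})$ is the convex hull of the fundamental weights; this is exactly the facet of the Weyl alcove $A = \mathrm{conv}(0,\omega_1,\dots,\omega_{n-1})$ opposite the origin. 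Second, a one-line pairing computation gives $\langle \omega_i,\theta\rangle = 1$ for every $i$: with $\theta = e_1 - e_n$, the weight $\omega_i$ has $e_1$-coefficient $1 - i/n$ and $e_n$-coefficient $-i/n$, so the difference is $1$. Hence every $v \in \Delta_\omega$ lies on the affine hyperplane $H = \{x : \langle x,\theta\rangle = 1\}$, i.e.\ $\Delta_\omega$ sits precisely on the wall through which $R$ reflects.

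Next I would write $R$ and the transposition out explicitly. Since $\langle\theta,\theta\rangle = 2$, reflection in $H$ is $R(x) = x - (\langle x,\theta\rangle - 1)\,\theta$, while reflection in the parallel linear hyperplane at the origin is $s_\theta(x) = x - \langle x,\theta\rangle\,\theta$; this $s_\theta$ is precisely the image of $(1,n)$ in the reflection representation, since it interchanges $e_1$ and $e_n$ and fixes the remaining coordinates. In particular $R = t_\theta \circ s_\theta$, the composite of $s_\theta$ with translation by $\theta$, which already explains why a left factor of $(1,n)$ should appear.

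The heart of the proof is then a short identity: for any $v$ with $\langle v,\theta\rangle = 1$ I claim $R\big((I-\pi)v\big) = (I - s_\theta\pi)\,v$. To check it, expand both sides. The right-hand side equals $v - s_\theta(\pi v) = (I-\pi)v + \langle \pi v,\theta\rangle\,\theta$. For the left-hand side, set $x = (I-\pi)v$; then $\langle x,\theta\rangle = \langle v,\theta\rangle - \langle \pi v,\theta\rangle = 1 - \langle \pi v,\theta\rangle$, so $R(x) = x - (\langle x,\theta\rangle - 1)\theta = x + \langle \pi v,\theta\rangle\,\theta$, matching. Applying this to each $v \in \Delta_\omega$, where the hypothesis $\langle v,\theta\rangle = 1$ is guaranteed, and using that $s_\theta\pi$ is the reflection-representation image of $(1,n)\pi$, yields $R(\Delta_\pi) = (I-(1,n)\pi)\,\Delta_\omega = \Delta_{(1,n)\pi}$. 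The involution claim follows at once: $R^2 = \mathrm{id}$ as an affine reflection and $(1,n)^2 = e$, so $R$ permutes $\{\Delta_\pi\}_{\pi\in\mathfrak{S}_n}$ by the order-two map $\pi \mapsto (1,n)\pi$.

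I expect the only delicate point to be bookkeeping rather than mathematics. One must ensure the hypothesis $\langle v,\theta\rangle = 1$ is actually available — this is exactly why the height-one facet $\Delta_\pi$ is the right object, whereas the full simplex $V_\pi$ or the cone $C_\pi$ would not reflect so cleanly — and one must keep straight that left multiplication $\pi\mapsto(1,n)\pi$ corresponds to post-composition $s_\theta\circ\pi$ of linear maps. The two auxiliary remarks in the statement fit the same template: that right multiplication by $(1,n)$ is the gluing map between adjacent Meinrenken tiles, and that conjugation by the longest element $w_0$ realizes the left-right symmetry, are each a symmetry of $V$ intertwined with a concrete operation on $\mathfrak{S}_n$ and are verified by an analogous short pairing computation.
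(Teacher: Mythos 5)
Your argument is correct, and it takes a genuinely different route from the paper. The paper proves the theorem combinatorially: it examines the transformation diagram, observes that left (resp.\ right) multiplication by $(1,n)$ moves two stars in a way that adds or subtracts $\theta$ from each column (resp.\ row) according to whether its first and last entries are $0$'s or $1$'s, and then separately computes $R$ in root coordinates via the projection matrix $JC$, checking case by case on $v_1+v_{n-1}$ that this matches ``add or subtract $\theta$ from columns.'' You instead work intrinsically in the reflection representation: you use $\Delta_\pi=(I-\pi)\Delta_\omega$, the pairing fact $\langle\omega_i,\theta\rangle=1$ (so that $\Delta_\omega$ lies on the mirror of $R$), and the clean operator identity $R\bigl((I-\pi)v\bigr)=(I-s_\theta\pi)v$ valid on the height-one hyperplane. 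Your computation checks out, and it arguably gives a tighter proof of the displayed identity $R(\Delta_\pi)=\Delta_{(1,n)\pi}$ than the paper's, since it needs no case analysis and makes transparent why the affine shift by $\theta$ exactly cancels. What the paper's approach buys in exchange is the explicit combinatorial description of how each column changes (which ties into the UM-vector / order-filter discussion in the next section) and a concrete, if informal, argument for the gluing-map claim: translation of the $\Delta_\pi$'s that preserves the set of Waldspurger matrices must be the gluing map for tiling by Meinrenken tiles. Your treatment of the two auxiliary remarks is more cursory than the paper's; if you wanted parity there, you would carry out the analogous computation $(I-\pi s_\theta)v=(I-\pi)v+\pi\theta$ for $\langle v,\theta\rangle=1$ to see that right multiplication translates $\Delta_\pi$ by $\pi\theta$, and then say why such a translation is the gluing map. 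But for the main assertion of the theorem your proof is complete.
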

\begin{proof}
Conjugation by the longest element is known to induce a left right symmetry in the root lattice.  In the next section we will see that UM vectors (the columns of Waldspurger matrices) are really special order filters in this lattice, and inherit this same action.\newline 
To prove the other two statements, we will consider how the transformation diagram changes when one applies the transposition $(1,n)$ on the left (respectively right).  The two moving stars will cause $\theta$ to be subtracted from all columns (respectively rows) starting and ending with $1$'s, and to be added to all columns (respectively rows) starting and ending with $0$'s. Adding or subtracting $\theta$'s from rows is acting by translation on the $\Delta_{\pi}$'s, and since this transformation preserves being a Waldspurger matrix, it must be the gluing map for attaching multiple Meinrenken tiles to tile space.\newline
In contrast, adding $\theta$'s to columns is the reflection $R$.
Indeed, consider where $R$ sends column vectors.  If we let $P$ denote projection onto $\theta$, then $v \mapsto (id-2P)v+\theta$.  In root coordinates, this projection is described by the matrix $\frac{2\theta \theta^{T}C}{\theta^{T}C\theta}=JC$ where $J$ is the matrix of all ones and $C$ is the Cartan matrix.  One may verify that $$JC=\left[\begin{matrix}1&0&\dots&0&1\\1&0&\dots&0&1\\ \vdots&\vdots&\vdots&\vdots &\vdots\\1&0&\dots&0&1\\1&0&\dots&0&1 \end{matrix}\right]$$
and thus $$v \mapsto (I-JC)v+\theta =v-(v_1+v_{n-1})\theta +\theta= \begin{cases}
v & \textrm{ if } v_1+v_{n-1}=1\\
v+\theta & \textrm{ if } v_1+v_{n-1}=0 \\
v-\theta & \textrm{ if } v_1+v_{n-1}=-1.
\end{cases}
$$

\end{proof}

\section{UM vectors}\label{um vectors section}
Suppose $v$ is the $k$-th column of the Waldspurger matrix associated to the permutation $\pi$.  It is evident from the transformation diagram that $v_1=0$ or $v_1=1$ since the one in the first row of $\pi$ can either be to the left or to the right of $v_1$.  By similar reasoning, for $i\leq k$ we have $v_i=v_{i-1}$ or $v_i=v_{i-1}+1$  and for $i>k$ we have $v_i=v_{i-1}$ or $v_i=v_{i-1}-1$ with $v_n=0$ or $v_n=1$. In other words, $v$ will start with a zero or a one, weakly increase (by steps of $0$ or $1$) until the $k$th entry, and then weakly decrease (by steps of $0$ or $1$), to the last entry.

\begin{definition}
A \emph{Motzkin path} is a lattice path in the integer plane $\mathbb{Z}\times \mathbb{Z}$ consisting of steps $(1,1), (1,-1),(1,0)$ which starts and ends on the x-axis, but never passes below it. A Motzkin path is \emph{unimodal} if all occurrences of the step $(1,1)$ are before the occurrences of $(1,-1)$.  For brevity, we will hence forth refer to unimodal Motzkin paths as \emph{UMP}'s.
\end{definition}
\begin{lemma} 
\emph{(counting UMPs)} \newline
There are $2^{n-1}$ UMPs between $(0,0)$ and $(0,n)$.
\end{lemma}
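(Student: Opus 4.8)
The plan is to count unimodal Motzkin paths of length $n$ (here the endpoints should read $(0,0)$ and $(n,0)$) by recording each path as a word and then as an even-size subset of its step positions. First I would write a path as a word $s_1 s_2 \cdots s_n$ in the alphabet $\{U,D,F\}$, where $U=(1,1)$, $D=(1,-1)$, and $F=(1,0)$. Exactly three conditions cut out the UMPs: the path returns to the axis, so $\#U = \#D$; it is unimodal, so every $U$ precedes every $D$; and it never dips below the axis.

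The observation I would isolate is that, for words satisfying the first two conditions, the third is automatic. Indeed, if there are $m$ up-steps and $m$ down-steps with all $U$'s before all $D$'s, then reading left to right the height only rises or stays level until the last $U$ and only falls or stays level thereafter, so the running height is trapped between its initial value $0$ and its terminal value $0$ and is never negative. Hence a UMP of length $n$ is the same datum as a word in $\{U,D,F\}^n$ with $\#U = \#D$ and all $U$'s before all $D$'s, with no separate lattice-path condition left to verify.

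Next I would set up the bijection with even-size subsets of $[n]=\{1,\ldots,n\}$. Given such a word, let $T$ be the set of positions carrying a non-level step; then $|T|=2m$ is even, and unimodality together with $\#U=\#D$ forces the subword on $T$ to be exactly $U^m D^m$, i.e. the first $m$ elements of $T$ carry $U$'s and the last $m$ carry $D$'s. Conversely, from any subset $T\subseteq[n]$ with $|T|=2m$ even I reconstruct a unique word by placing $U$'s on the first $m$ elements of $T$, $D$'s on the last $m$, and $F$'s elsewhere; by the previous paragraph this word is a genuine UMP. These maps are mutually inverse, so UMPs of length $n$ are in bijection with the even-size subsets of $[n]$.

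The enumeration is then the standard identity $\sum_{m\ge 0}\binom{n}{2m}=2^{n-1}$ for $n\ge 1$, since exactly half of the subsets of $[n]$ have even size. I expect the only real content to be the structural claim that unimodality plus balance pins the non-level subword down to $U^m D^m$ and thereby makes nonnegativity automatic; once that is in hand, the count collapses to this one-line binomial identity.
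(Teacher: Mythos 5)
Your proof is correct, but it takes a genuinely different route from the paper. The paper argues by induction on $n$ with an inclusion--exclusion over the first and last steps: paths whose first step is level and paths whose last step is level each contribute $2^{n-2}$ by the induction hypothesis, their overlap contributes $2^{n-3}$, and the remaining paths (first step $U$, last step $D$) are in bijection with UMPs of length $n-2$, giving $2^{n-2}+2^{n-2}-2^{n-3}+2^{n-3}=2^{n-1}$. You instead give a direct bijection: you observe that for a balanced, unimodal word the nonnegativity condition is automatic (the running height is weakly unimodal with value $0$ at both ends), so a UMP is determined by the even-size set $T\subseteq[n]$ of positions of its non-level steps, the first half of $T$ necessarily carrying $U$'s and the second half $D$'s; the count is then $\sum_{m}\binom{n}{2m}=2^{n-1}$. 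Your argument is arguably preferable here: it is non-recursive, it explains \emph{why} the answer is a power of two (even-size subsets are exactly half of all subsets), and the structural observation that unimodality plus balance forces the non-level subword to be $U^mD^m$ is the same fact that underlies the paper's later bijection between UM vectors and UMPs. The paper's induction, by contrast, is more routine but requires no structural insight beyond checking that the truncations in each case land back in the class of UMPs. You are also right that the endpoints in the statement should read $(0,0)$ and $(n,0)$.
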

\begin{proof}\emph{(induction)}
\newline
{\bf Base case: } There is only one UMP of length one, and only two UMPs of length two.
\newline

{\bf Induction hypotheses:}
Suppose there are $2^{k-1}$ UMPs of length $k$ for all $k\leq n-1$.  Consider an arbitrary UMP of length $n$.\\

{\bf Case 1:}\\ 
The first step of the UMP is $(1,0)$. Cutting off this step, we have an arbitrary UMP of length $n-1$ and so by induction, that there are $2^{n-2}$ such UMPs.\\

\begin{center}
\includegraphics[]{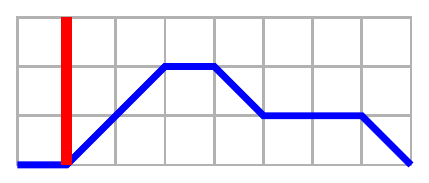}
\end{center}

{\bf Case 2:}\\ 
The last step of the UMP is $(1,0)$. Cutting off this last step, we have an arbitrary UMP of length $(n-1)$ and so by induction, that there are $2^{n-2}$ such UMPs.\\

\begin{center}
\includegraphics[scale=.5]{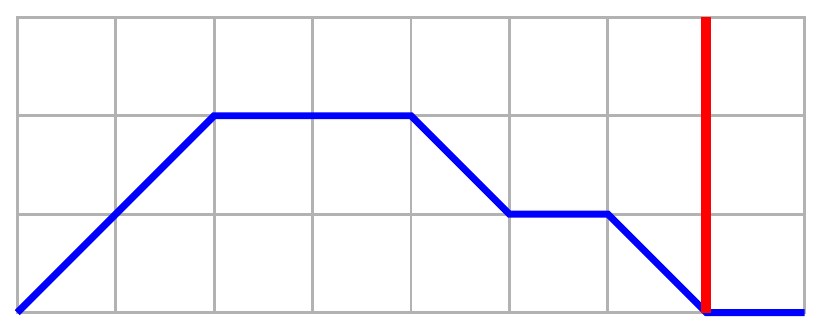}
\end{center}

{\bf Case 3:}\\ 
We have double counted some stuff.  If the first and last steps of a UMP are both $(1,0)$ then the UMP was counted by both of the previous cases.  Cutting off both the first and last steps we have an arbitrary UMP of length $n-2$.  There are, by induction, $2^{n-3}$ such UMPs.\\

\begin{center}
\includegraphics[scale=.5]{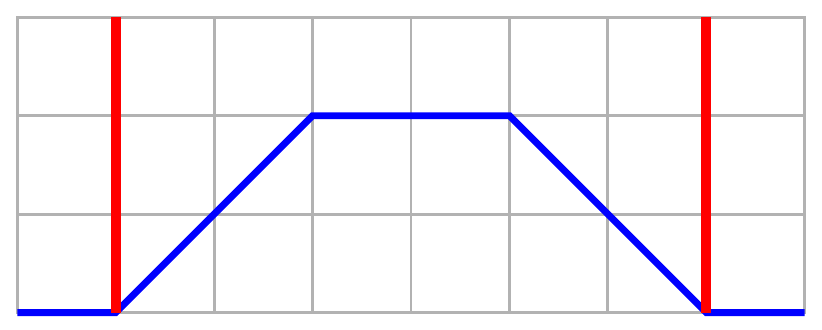}
\end{center}

{\bf Case 4:}\\ 
The first and last steps of the UMP are $(1,1)$ and $(1,-1)$, respectively. Cutting these steps once again, we see by induction, that there are $2^{n-3}$ such UMPs.\\

\begin{center}
\includegraphics[scale=.5]{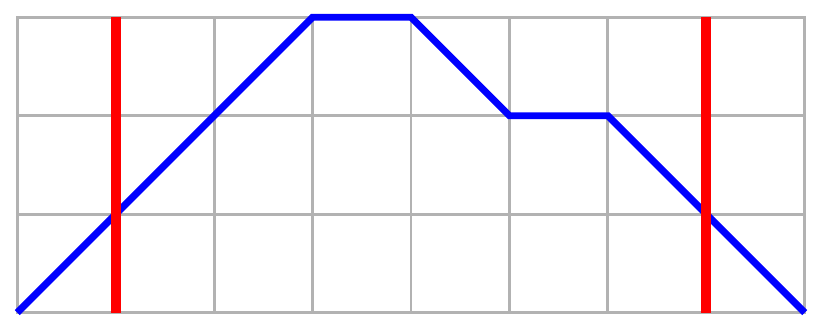}
\end{center}

So we see that there are $2^{n-2}+2^{n-2}-2^{n-3}+2^{n-3}=2^{n-1}$ UMPs of length $n$.
\end{proof}
\begin{definition}
A \emph{UM vector} is any vector that appears as a column in $\mathbf{WT}(\pi)$ for some permutation $\pi$.
\end{definition}

\begin{theorem}
There is a bijective correspondence between UM vectors of length $n-1$ and UMPs with $n$ steps.  Consequentially, there are  $2^n$ UM vectors of length $n$.
\end{theorem}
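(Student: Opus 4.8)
The plan is to exhibit an explicit mutually inverse pair of maps between UM vectors of length $n-1$ and UMPs with $n$ steps, and then invoke the counting lemma. Given a UM vector $v=(v_1,\dots,v_{n-1})$, I would send it to the lattice path $\Phi(v)$ whose successive heights are $(0,v_1,v_2,\dots,v_{n-1},0)$, a path of exactly $n$ steps from $(0,0)$ to $(n,0)$. The properties recorded in the paragraph preceding the theorem---that $v$ begins at a value in $\{0,1\}$, weakly increases by steps of $0$ or $1$ up to its $k$-th entry, weakly decreases by steps of $0$ or $1$ thereafter, and terminates so that the appended final height is $0$---say precisely that every increment $y_i-y_{i-1}$ lies in $\{-1,0,+1\}$, that the path never drops below the $x$-axis (all $v_i\ge 0$), and that every up-step precedes every down-step. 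Hence $\Phi(v)$ is a UMP, so $\Phi$ is well defined, and it is visibly injective since the interior heights $y_1,\dots,y_{n-1}$ recover $v$.

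The substantive step is surjectivity: every UMP $P$ with $n$ steps must arise as $\Phi(v)$ for a vector $v$ that genuinely occurs as a Waldspurger column. Writing the heights of $P$ as $(0,y_1,\dots,y_{n-1},0)$ and setting $v=(y_1,\dots,y_{n-1})$, I must produce a permutation $\pi$ one of whose Waldspurger columns equals $v$. Here I would use the closed form $\mathbf{WT}(\pi)=D$ from Theorem \ref{wald transform thm}: fixing a column index $k$ at which $P$ attains its maximum height, the formula shows that for $i\le k$ the increment $D_{i,k}-D_{i-1,k}$ records whether the $1$ in row $i$ of $\pi$ lies strictly right of column $k$, while for $i>k$ it records (with a sign) whether the $1$ in row $i$ lies weakly left of column $k$. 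I therefore read the up/flat/down steps of $P$ as placement instructions: put the $1$ of each top row ($i\le k$) in the right block of columns iff step $i$ is an up-step, and put the $1$ of each bottom row in the left block iff the corresponding step is a down-step.

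It then remains to check that these block-membership instructions complete to a genuine permutation and that the completion is irrelevant. A short count shows the left block (columns $1,\dots,k$) is assigned exactly $k-y_k$ top rows and $y_k$ bottom rows, for $k$ rows in all, and symmetrically the right block is assigned exactly $n-k$ rows; since a Motzkin path forces $y_k\le\min(k,n-k)$, both tallies are non-negative and each block receives as many rows as it has columns, so any bijective assignment of rows to columns within each block produces a permutation. Because the entries $D_{i,k}$ depend only on the block membership of each row and not on the exact position within a block, every such completion has $k$-th Waldspurger column equal to $v$; hence $v$ is a UM vector and $\Phi$ is onto. With $\Phi$ a bijection, the counting lemma gives $2^{n-1}$ UM vectors of length $n-1$, and the same bijection one size larger (UM vectors of length $n$ against UMPs with $n+1$ steps) yields the stated count of $2^n$. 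The only place demanding real care is this surjectivity construction---specifically, confirming that the freedom in placing ones within each block never disturbs the prescribed column, and that a valid peak index $k$ exists even when $P$ has a flat top.
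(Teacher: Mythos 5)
Your proposal is correct, and its forward map is exactly the paper's: pad the UM vector with a zero at each end and read the result as the heights of a lattice path, the unimodality and step-size observations from the start of the section guaranteeing a UMP. Where you genuinely diverge is that you also prove surjectivity, which the paper's proof does not address: since a UM vector is \emph{defined} as a vector actually occurring as a column of some $\mathbf{WT}(\pi)$, one must exhibit, for each UMP, a permutation realizing the corresponding interior-height vector as a column, and the paper simply asserts the correspondence and gives an example. Your block construction does this cleanly --- reading the up/flat/down steps as instructions for whether each row's $1$ sits left or right of column $k$, checking via the identity $(k-y_k)+y_k=k$ that each block receives as many rows as columns (using $y_k\le\min(k,\,n-k)$), and noting that $D_{i,k}$ depends only on block membership so any completion works. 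The one point worth stating explicitly is the existence of a valid peak index: taking $k$ to be any position where the path attains its maximum (any $k$ at all if the path is flat) ensures column $k$ is weakly increasing up to the diagonal and weakly decreasing after, as required of a Waldspurger column. With that noted, your argument is a strictly more complete version of the paper's, at the cost of invoking the explicit formula for $\mathbf{WT}(\pi)$ from Theorem \ref{wald transform thm}; the paper's version buys brevity by leaving the realizability of every UMP implicit.
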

\begin{proof}
A UM vector must start with a zero or a one, weakly increase by one until its entry on the diagonal, and then weakly decreases by one until its final entry, a zero or one.  Any row vector of a Waldspurger matrix must also be a UM vector with its maximum also on the diagonal. Padding a UM vector with zeros on each end gives the $x$ coordinates for a UMP of length $n$.
For example, 
\begin{center}
$(1,2,3,3,2,2,1)\leftrightarrow(0,1,2,3,3,2,2,1,0)\leftrightarrow$
\includegraphics[]{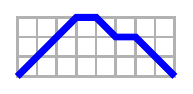}
\end{center}
\end{proof}

\begin{theorem}
UM vectors are in bijection with tableaux with hook length bounded above by $n$ and with Abelian ideals in the nilradical of the Lie Algebra $\mathfrak{sl}_n$.
\end{theorem}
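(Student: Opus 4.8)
The plan is to prove the statement as a chain of two explicit bijections sharing a common backbone: the minimal elements of an order filter in the positive root poset of type $A_{n-1}$. As remarked in the proof of the preceding symmetry theorem, a UM vector should be thought of as a special order filter in the root lattice, and an abelian ideal in $\mathfrak{n}$, the nilradical of $\mathfrak{sl}_n$, is exactly a $\mathfrak{b}$-stable subspace $\mathfrak{a}=\bigoplus_{\alpha\in I}\mathfrak{g}_\alpha$ with $[\mathfrak{a},\mathfrak{a}]=0$. Identifying the positive root $e_i-e_j$ ($i<j$) with the pair $(i,j)$ and with the simple-root interval $[i,j-1]$, the condition that $\mathfrak{a}$ be an ideal of the Borel says precisely that $I$ is an order filter (an up-set) for the root order, and since $[E_{ij},E_{kl}]=\delta_{jk}E_{il}-\delta_{li}E_{kj}$, the abelian condition says that no two roots of $I$ sum to a root. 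Throughout I would work with $I$ and its antichain of minimal elements.

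First I would set up the bijection between UM vectors and abelian ideals. The minimal elements of $I$ form an antichain $(i_1,j_1),\dots,(i_t,j_t)$ which, after sorting, satisfies $i_1<\dots<i_t$ and $j_1<\dots<j_t$, since an antichain in the staircase poset has strictly increasing endpoints. The key structural step is to show that the abelian condition is equivalent to the clean separation $i_t<j_1$: writing $L$ and $R$ for the sets of left and right endpoints occurring anywhere in $I$, the abelian condition is exactly $L\cap R=\emptyset$, and because $I$ is an up-set these are the intervals $\{1,\dots,i_t\}$ and $\{j_1,\dots,n\}$, so disjointness reads $i_t<j_1$. I would then define the associated vector by $v=\sum_{a}\mathbf{1}_{[i_a,\,j_a-1]}$, the sum of indicator vectors of the generator supports, and verify $v$ is a UM vector: because the $i_a$ are distinct, the $j_a$ are distinct, and $i_t<j_1$, the partial counts rise by single steps to the plateau value $t$ on $[i_t,j_1-1]$ and then fall by single steps, with first and last entries in $\{0,1\}$, which is exactly the unimodality, step-size, and boundary conditions defining a UM vector.

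The inverse map is where the one genuine subtlety lies, and I expect it to be the crux. From a UM vector I can read off the ascent and descent positions of its zero-padded step sequence; unimodality forces all ascents to precede all descents, and the return to $0$ forces equal numbers of each, producing candidate endpoint sets $\{i_a\}$ and $\{j_a\}$. The obstacle is that the indicator sum $\sum_a\mathbf{1}_{[i_a,\,j_a-1]}$ forgets the pairing of left with right endpoints: the identity $\mathbf{1}_A+\mathbf{1}_B=\mathbf{1}_{A\cup B}+\mathbf{1}_{A\cap B}$ shows that a crossing and a nested pairing yield the same vector. The point to nail down is that exactly one pairing, the sorted one $(i_a,j_a)$, gives an antichain, the nested alternative violating incomparability; this forces the pairing, makes the map injective, and completes the bijection onto abelian ideals.

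Finally I would read off the Young diagram. Drawing $I$ in the grid of strictly upper-triangular positions, the up-set condition makes its cells a partition $\mu$ pressed into the top-right corner with $\mu_i=n-c_i+1\le n-i$; conversely every such $\mu$ is an order filter, so abelian ideals are literally Young diagrams sitting in the staircase. Translating the separation $L\cap R=\emptyset$ into this language gives $\mu_1+\ell(\mu)\le n$, so the corner hook $\mu_1+\ell(\mu)-1$, which is the maximum hook length, is at most $n-1$ and in particular bounded above by $n$ as stated; composing with the previous paragraphs proves the theorem. As a consistency check, all three families are enumerated by $2^{n-1}$: the UM-vector count from the previous theorem, the known count $2^{\mathrm{rank}}$ of abelian ideals, and the diagram count. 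The endpoint-separation equivalence and the pairing-uniqueness argument are the substance; everything else is bookkeeping. A streamlined alternative is to route the entire chain through the UMP of the previous theorem, whose ascents, flats, and descents simultaneously encode the generating antichain of $I$ and the boundary path of $\mu$.
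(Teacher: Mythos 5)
Your proof is correct and follows essentially the same route as the paper: both decompose a UM vector into the antichain of positive roots generating an order filter (your ascent/descent pairing yields the same antichain as the paper's greedy subtraction of highest roots with nondecreasing support), then identify that filter with an abelian ideal and with a Young diagram of bounded hook length. The paper's own proof is only a two-sentence sketch with one worked example and a figure, so your verification that abelianness is exactly the endpoint-separation condition $i_t<j_1$, and that the sorted pairing is the unique antichain pairing, supplies rigor the paper leaves implicit rather than a genuinely different argument.
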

\begin{proof}

One can take any UM vector and write it as a sum of positive roots by recursively subtracting the highest root whose nonzero entries correspond to positive nondecreasing entries in the UM vector.  For example, the vector $(0,1,2,1)=(0,1,1,0)+(0,0,1,1)$  This set of positive roots will always generate an abelian ideal in the nilradical of the Lie Algebra $\mathfrak{sl}_n$ and will correspond to a tableau with bounded hook length, as seen in the diagram below.
\begin{center}
\includegraphics[scale=.8]{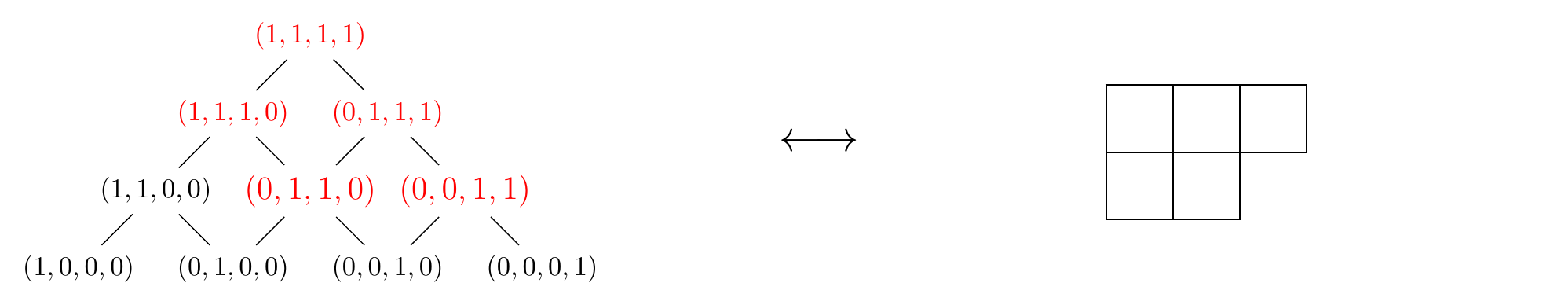}
\end{center}
\end{proof}
\begin{theorem}
 UM vectors are exactly the coroots $c$ (in root coordinates) such that $-1\leq (c,r)\leq 2$ for every positive root $r$.
 They are coroots inside the polytope defined by affine hyperplanes at heights $-1$ and $2$ orthogonal to every positive root. 
 \label{inequality}
\end{theorem}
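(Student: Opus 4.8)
The plan is to translate the inner-product condition into a statement about the sequence of consecutive differences of $c$, where the unimodal Motzkin condition becomes transparent. Write $c=(c_1,\dots,c_{n-1})$ in simple-root coordinates and adopt the boundary convention $c_0=c_n=0$. Recall that the positive roots of $A_{n-1}$ are exactly the interval sums $r_{[i,j]}=a_i+\dots+a_j$ for $1\le i\le j\le n-1$, and that in type $A$ (being simply laced, with all roots of squared length $2$) the coroot pairing agrees with the symmetric form, so that $(c,r)=c^{T}Cr$ with $C$ the Cartan matrix.

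First I would compute $(c,r_{[i,j]})$ explicitly. Since $C$ carries $2$'s on the diagonal and $-1$'s on the sub- and super-diagonal, applying it to the indicator vector $\mathbf{1}_{[i,j]}$ of the interval gives $C\mathbf{1}_{[i,j]}=e_i+e_j-e_{i-1}-e_{j+1}$ (the boundary terms vanishing under the convention above), so that, setting $u_m:=c_m-c_{m-1}$,
\[
(c,r_{[i,j]})=c_i-c_{i-1}+c_j-c_{j+1}=u_i-u_{j+1}.
\]
As $(i,j)$ runs over all positive roots, the pair $(p,q):=(i,j+1)$ runs over all $1\le p<q\le n$, so the polytope condition $-1\le(c,r)\le 2$ for every positive root becomes the purely combinatorial condition
\[
-1\le u_p-u_q\le 2\qquad\text{for all }1\le p<q\le n,
\]
which I will refer to as condition $(\ast)$. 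Here $u=(u_1,\dots,u_n)$ is exactly the step sequence of the lattice path through the heights $(0,c_1,\dots,c_{n-1},0)$, and by the earlier bijection $c$ is a UM vector precisely when this path is a UMP, i.e.\ when $u\in\{-1,0,1\}^n$ with every $+1$ preceding every $-1$ (the return to the axis, $\sum_m u_m=c_n-c_0=0$, being automatic from the boundary convention).

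It then remains to show that $(\ast)$ is equivalent to ``$u\in\{-1,0,1\}^n$ with all $+1$'s preceding all $-1$'s.'' The reverse implication is immediate: if $u_p,u_q\in\{-1,0,1\}$ then $u_p-u_q\in[-2,2]$, and the value $-2$ forces $u_p=-1,u_q=1$ with $p<q$, which a unimodal step sequence forbids. For the forward implication I would proceed in two stages. Choosing indices where $u$ attains its maximum and its minimum, the two halves of $(\ast)$ (applied according to which index is smaller) both yield $\max_m u_m-\min_m u_m\le 2$; combining this bounded range with the automatic constraint $\sum_m u_m=0$ pins the values down, since a zero-sum integer sequence that is not identically zero must contain both a strictly positive and a strictly negative entry, forcing $\min\le-1$ and $\max\ge1$, hence $\min=-1$ and $\max=1$. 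Thus $u\in\{-1,0,1\}^n$, and the lower bound in $(\ast)$ then directly excludes a $-1$ occurring before a $+1$, giving the unimodal ordering.

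The routine part is the linear-algebra translation of the first two paragraphs. The main obstacle, and the one genuinely clever point, is the forward direction of the final step: one must recognize that the asymmetric window $[-1,2]$ is exactly calibrated, the upper bound $2$ controlling the \emph{total range} of the step sequence (so that the zero sum forces the steps into $\{-1,0,1\}$), while the lower bound $-1$ simultaneously enforces the \emph{unimodal ordering}. I would also be careful to treat the identically-zero sequence (the UM vector $c=0$, corresponding to the identity permutation) as the degenerate case of the range argument.
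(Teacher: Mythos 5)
Your proof is correct, and it reaches the conclusion by a genuinely different route than the paper for the harder containment. Both arguments share the same opening linear-algebra step: expanding $(c,r)$ for the positive root supported on the interval $[i,j]$ gives $c_i-c_{i-1}+c_j-c_{j+1}$, and both then verify from unimodality that every UM vector satisfies $-1\le (c,r)\le 2$. The difference is in how the reverse containment is obtained. The paper does not prove it directly: it invokes the result Panyushev attributes to Peterson and Kostant that the polytope contains exactly $2^{n-1}$ coroot-lattice points, and concludes equality of the two sets from the earlier count of $2^{n-1}$ UM vectors. You instead give a direct, self-contained equivalence: rewriting the constraints as $-1\le u_p-u_q\le 2$ for all $p<q$ on the step sequence $u_m=c_m-c_{m-1}$ (with $c_0=c_n=0$), you use the max/min range bound together with the forced zero sum to pin $u$ into $\{-1,0,1\}^n$, and then the lower bound $-1$ to exclude any $-1$ step preceding a $+1$ step. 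This buys independence from the external counting theorem (and in fact reproves that count, since the UMPs were already enumerated in the paper), at the cost of a slightly longer argument; the paper's route is shorter given the citation and ties the theorem to the abelian-ideals literature it wants to connect with. Your closing observation that the asymmetric window $[-1,2]$ splits its work --- the total width of $3$ combining with the zero-sum condition to control the value set, the lower endpoint alone controlling the unimodal ordering --- is exactly the right way to see why the statement is a genuine equivalence rather than merely a containment.
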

\begin{proof}This follows from a result which Panyushev attributes to  Peterson and
Kostant \cite{panyushev2011abelian} which is expressed in the language of Abelian ideals. They show that the number of coroots inside of this ``Pederson Polytope" is $2^{n-1}$.  If we can show that our $2^{n-1}$ UM vectors are inside the polytope, we will be done.
Explicitly, suppose that $\bar{x}$ is a UM vector and $\bar{y}$ is a positive root (both expressed in root coordinates).  Then $(x,y)=x^t\cdot y=\bar{x}^tA^tA\bar{y}=\bar{x}^tC\bar{y}$ where $A$ is the matrix defined in Theorem \ref{wald transform thm} and $C$ is the Cartan matrix.  Suppose that $\bar{y}=(0,\dots,0,1,\dots,1,0,\dots 0)^t$ where the first one is in position $i$ and the last one is in position $j$.
Then 
\begin{align*}
\bar{x}^tC\bar{y}&=2\left(\sum_{k=i}^{j} x_k\right) -x_{i-1}-x_{j+1}-2\left(\sum_{k=i+1}^{j-1}x_k\right)\\
&=-x_{i-1}+x_{i}+x_{j}-x_{j+1}
\end{align*}
Because $(x_1,\dots,x_{n-1})$ is a UM vector, $x_i$ and $x_{i-1}$ can differ by at most one, and likewise $x_j$ and $x_{j+1}$ can differ by at most one.
This yields that $$-2\leq x_{i}-x_{i-1}+x_{j}-x_{j+1} \leq 2.$$  However, the $-2$ is unatainable by the unimodality of UM vectors. Suppose that $x_{i-1}>x_{i}$, that is $x_{i-1}=x_{i}+1$.  Then $x_j \geq x_{j+1}$, that is, $x_j=x_{j+1}$ or $x_j+1=x_{j+1}$.
Either way, $x_{i}-x_{i-1}+x_{j}-x_{j+1}=x_{i}-(x_{i}+1)+x_{j}-x_{j+1}>-2$.
Thus $$-1\leq x_{i}-x_{i-1}+x_{j}-x_{j+1} \leq 2$$ showing that our UM vectors are all inside the Pederson polytope.
\end{proof}

\section{Entropy, Alternating Sign Matrices, and the Waldspurger Transform in General}\label{entropy,asm, gen trans section}
\begin{definition}
The \textit{Entropy} (alternatively called variance in the literature) of a permutation $\pi$ is $$E(\pi):=\sum_{i=1}^n (\pi(i)-i)^2$$
\end{definition}
\begin{definition}
The \textit{Waldspurger Height} of a permutation $\pi$, is $$h(\pi):=\sum_{i=1}^n\sum_{j=1}^n \mathbf{WT}(\pi)_{i,j}$$
\end{definition}
\begin{theorem}\label{entropy} For $\pi \in \mathfrak{S}_n$,
$$h(\pi)=\frac{1}{2}E(\pi)$$.
\end{theorem}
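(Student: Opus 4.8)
The plan is to compute the total $h(\pi)=\sum_{i,j}\mathbf{WT}(\pi)_{i,j}$ directly from the combinatorial formula for the entries and to recognize the result as $\tfrac12 E(\pi)$ by tracking the contribution of each $1$ in the permutation matrix. By Theorem \ref{wald transform thm} I may replace $\mathbf{WT}(\pi)$ by the matrix $D$, so that $h(\pi)=\sum_{1\le i,j\le n-1}D_{i,j}$.

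First I would split the sum according to the three regions $i<j$, $i>j$, and $i=j$, since $D_{i,j}$ is defined piecewise. In each region I would interchange the order of summation so as to count, for each star $\pi_{a,b}=1$, the number of cells $(i,j)$ to whose entry it contributes. A star at $(a,b)$ with $a<b$ contributes to $D_{i,j}$ in the region $i<j$ exactly when $a\le i<j\le b-1$, giving $\binom{b-a}{2}$ cells, and it contributes to the diagonal entries $D_{i,i}$ exactly when $a\le i\le b-1$, giving $b-a$ cells; it contributes nothing in the region $i>j$. Symmetrically, a star with $a>b$ contributes $\binom{a-b}{2}$ in the region $i>j$ and nothing elsewhere, evaluating each diagonal entry by the same (upper) branch. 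Collecting terms gives
\[
h(\pi)=\sum_{a<b}\binom{b-a}{2}+\sum_{a>b}\binom{a-b}{2}+\sum_{a<b}(b-a),
\]
where all sums run over the stars $\pi_{a,b}=1$.

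Next, the diagonal term is where I would deploy the one genuinely non-local fact: for a permutation the total rightward displacement equals the total leftward displacement, i.e. $\sum_{a<b}(b-a)=\sum_{a>b}(a-b)$, because $\sum_a(\pi(a)-a)=0$. Rewriting the diagonal contribution as the average $\tfrac12\bigl(\sum_{a<b}(b-a)+\sum_{a>b}(a-b)\bigr)=\tfrac12\sum_a|\pi(a)-a|$ makes each star of displacement $d=|\pi(a)-a|$ contribute $\binom{d}{2}+\tfrac{d}{2}=\tfrac{d^2}{2}$. Summing over stars yields $h(\pi)=\tfrac12\sum_a(\pi(a)-a)^2=\tfrac12 E(\pi)$, as claimed.

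The main obstacle is bookkeeping rather than ideas: the diagonal entries are defined by a formula that agrees in two ways, so one must be disciplined about counting each diagonal cell exactly once and not splitting it between the two branches. The asymmetry this forces on the raw count — above-diagonal stars pick up the extra linear term $(b-a)$ while below-diagonal stars do not — is precisely what the displacement-conservation identity repairs, converting the unbalanced expression into the manifestly symmetric $\tfrac12\sum_a d^2$. A quick sanity check on $\pi=4321$, whose Waldspurger matrix has entry sum $10$ and whose entropy is $9+1+1+9=20$, confirms the constant $\tfrac12$.
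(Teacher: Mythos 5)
Your proof is correct and is essentially the paper's argument made rigorous: the paper likewise attributes to each star of displacement $d$ the right triangle between it and the main diagonal, counting $\binom{d}{2}$ full interior cells plus $d$ half-cells on the diagonal for a total contribution of $d^2/2$. The only cosmetic difference is your bookkeeping on the diagonal --- you count it entirely via the upper branch of the defining formula and then symmetrize using $\sum_a(\pi(a)-a)=0$, whereas the paper splits each cut diagonal box fifty--fifty between the two triangles meeting it, which is justified by the agreement of the two branches of the definition of $D_{i,i}$.
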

\begin{proof}
Consider what each ``star" in the transformation diagram contributes to the Waldspurger matrix.  We can see it as contributing ones to every entry enclosed in the right triangle between itself and the main diagonal, and contributing one half for every entry on the main diagonal whose box is cut by the triangle. 
\begin{center}
\includegraphics[]{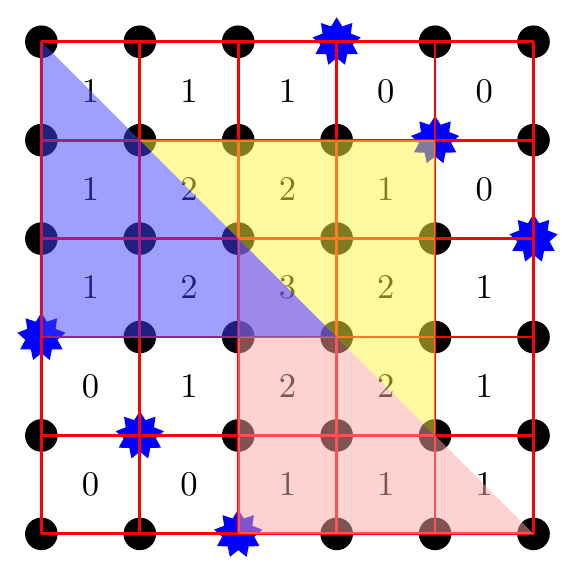}
\end{center}
\end{proof}
\begin{definition}
\emph{Alternating Sign Matrices} or ASMs, are square matrices with entries $0$, $1$, or $-1$ whose rows and columns sum to $1$ and alternate in sign.
\end{definition}
\begin{theorem}{A. Lascoux and M. Sch{\"u}tzenberger, 1996 \cite{sch}}\label{lascoux}

One half the entropy of a permutation is its rank in the Dedekind-MacNeille completion of the Bruhat order.  The elements in this lattice can be viewed as alternating sign matrices with partial order given by component-wise comparison of entries in their associated monotone triangles.  
\end{theorem}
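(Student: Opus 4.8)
The plan is to realize the Dedekind--MacNeille completion concretely through \emph{corner-sum matrices} and then to exhibit an explicit rank function whose value on a permutation is forced to be $\tfrac12 E(\pi)$ by the computation behind Theorem~\ref{entropy}. For an $n\times n$ alternating sign matrix $M$ I would record its corner-sum matrix $C_M(i,j)=\sum_{a\le i,\,b\le j}M_{a,b}$; the defining conditions on $M$ are equivalent to the statement that $C_M$ increases by $0$ or $1$ along every row and column and has the prescribed boundary values, and $C_M$ is an invertible re-encoding of the monotone triangle of $M$. Ordering ASMs by $M\le N \iff C_M\ge C_N$ componentwise turns the monotone-triangle comparison of the statement into an order on integer arrays whose join and meet are componentwise $\min$ and $\max$. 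The first step is to check that these $\min$/$\max$ arrays again satisfy the $0/1$-difference and boundary constraints, so that the ASMs form a finite, hence complete, lattice $L$; this is the classical fact that such height arrays form a distributive lattice, which I would quote rather than reprove.

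Next I would identify $L$ with the completion. Restricting the corner-sum order to permutation matrices recovers the Bruhat order by the classical corner-sum (``dot matrix'') criterion $u\le v \iff C_u\ge C_v$, which I would cite. It then remains to show that the permutations are both join-dense and meet-dense in $L$, which characterises $L$ as the Dedekind--MacNeille completion. For join-density I would prove the key lemma: for every ASM $M$ and every position $(i,j)$ there is a permutation $\pi$ with $\pi\le M$ and $C_\pi(i,j)=C_M(i,j)$. Granting this, every $\pi\le M$ satisfies $C_\pi\ge C_M$, while the lemma makes each entry of $C_M$ attained by some such $\pi$, so the componentwise minimum of the $C_\pi$ is exactly $C_M$; that is, $M=\bigvee\{\pi:\pi\le M\}$. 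Meet-density follows symmetrically, for instance via the transpose or the $w\mapsto w_0 w w_0$ symmetry already exploited in Section~\ref{geom obs section}. Constructing the tight permutation $\pi$ in the lemma --- threading a permutation matrix compatibly through the $1$'s and $-1$'s of $M$ so as to saturate the corner sum at a prescribed cell while staying below $M$ --- is the step I expect to be the main obstacle, since it is the only genuinely combinatorial (rather than formal) ingredient.

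Finally I would pin down the rank. Because $L$ is distributive it is graded, and I would give the rank function explicitly as $\rho(M)=\sum_{i,j}\big(C_{\mathrm{id}}(i,j)-C_M(i,j)\big)$, where $C_{\mathrm{id}}$ is the corner-sum matrix of the identity (the bottom element). In the order-ideal description of a finite distributive lattice each cover adds a single unit increment, i.e.\ changes exactly one entry of the corner-sum array by $1$, so $\rho$ increases by $1$ along every cover and is therefore the rank. The agreement of $\rho$ with half the entropy on permutations is then a short identity: using $C_\pi(i,j)=\#\{k\le i:\pi(k)\le j\}$ one has $\sum_{i,j}C_\pi(i,j)=\sum_k (n+1-k)(n+1-\pi(k))$, and with $d_k=\pi(k)-k$,
\begin{align*}
\rho(\pi)&=\sum_{k}(n+1-k)^2-\sum_{k}(n+1-k)(n+1-\pi(k))=\sum_k(n+1-k)\,d_k=-\sum_k k\,d_k .
\end{align*}
Since $\sum_k d_k=0$ and $\sum_k\pi(k)^2=\sum_k k^2$, this equals $\sum_k k^2-\sum_k k\pi(k)=\tfrac12\sum_k d_k^2=\tfrac12 E(\pi)$. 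Combining the three steps shows that $\tfrac12 E(\pi)$ is the rank of $\pi$ in the ASM lattice, which is the Dedekind--MacNeille completion of the Bruhat order.
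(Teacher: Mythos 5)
The paper does not actually prove this statement --- it is quoted verbatim as a theorem of Lascoux and Sch\"utzenberger with a citation to \cite{sch}, so there is no internal proof to compare against. Judged on its own terms, your outline is the standard modern route (corner-sum/height-function arrays, the distributive lattice they form under componentwise $\min$/$\max$, and an explicit rank function), and the final computation is correct and complete: $\sum_{i,j}C_\pi(i,j)=\sum_k(n+1-k)(n+1-\pi(k))$, hence $\rho(\pi)=\sum_k(n+1-k)d_k=-\sum_k k\,d_k=\sum_k k^2-\sum_k k\pi(k)=\tfrac12E(\pi)$ checks out, as does the identification of the rank with $\sum_{i,j}\bigl(C_{\mathrm{id}}(i,j)-C_M(i,j)\bigr)$ once one knows covers change a single corner-sum entry by one.

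The genuine gap is the join-density lemma, which you state and then defer: for every ASM $M$ and every cell $(i,j)$, produce a permutation $\pi$ with $C_\pi\ge C_M$ everywhere and $C_\pi(i,j)=C_M(i,j)$. This is not a technical loose end --- it is the entire content of the Lascoux--Sch\"utzenberger theorem. Everything else in your argument (distributivity of height arrays, the corner-sum criterion for Bruhat order, the rank identity) is formal or classical; without the threading construction the argument only shows that the ASM lattice is \emph{a} complete lattice containing Bruhat order, not that it is the \emph{smallest} one, and the ``rank in the Dedekind--MacNeille completion'' claim collapses. A correct proof of the lemma typically goes through the bigrassmannian join-irreducibles (each corner-sum constraint $C(i,j)\le c$ is realized by a unique minimal permutation, essentially the ``determined by a single entry'' matrices of Section~\ref{tetrahedral section}), and you would need to carry that out. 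One smaller slip: for meet-density you invoke ``the transpose or the $w_0ww_0$ symmetry,'' but both of those are order \emph{automorphisms} of Bruhat order; you need an order-reversing symmetry such as $M\mapsto w_0M$ (row reversal of the ASM), which does map permutations to permutations and reverses the monotone-triangle order.
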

The Dedekind-MacNeille completion of a poset $P$ is defined to be the smallest lattice containing $P$ as a subposet \cite{MacNeilleByBirkhoff}.  Its construction is similar to the Dedekind cuts used to construct the real numbers from the rationals.  For more on alternating sign matrices, monotone triangles, and their history we refer to ~\cite{asm}.  
This connection to alternating sign matrices motivates us to extend our definition of the Waldspurger transform to a larger class of matrices.

\begin{definition}
An $n \times n$ matrix $M$ is \textit{sum-symmetric} if its $i$th row sum equals its $i$th column sum for all $1\leq i \leq n$.  We write $M \in SS_n$.
\end{definition}

\begin{definition}\label{WT}
From an $n \times n$ sum-symmetric matrix $M$, define the $n-1 \times n-1$ matrix, $\mathbf{WT}(M)$ where $$\mathbf{WT}(M)_{i,j}=\begin{cases} 
      \displaystyle\sum_{\substack{a\leq i\\b>j}}M_{a,b} & i\leq j \\
      \displaystyle\sum_{\substack{a> i\\b\leq j}}M_{a,b} & i\geq j\\ 
   \end{cases}.
$$ 
\end{definition}

\begin{proposition}
$\mathbf{WT}(M)$ is well-defined if and only if $M \in SS_n$.  If $M$ were not sum-symmetric, the diagonal would be ``over-determined.''
\end{proposition}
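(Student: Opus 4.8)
The plan is to pin down exactly what ``well-defined'' means and then translate that condition into sum-symmetry. The only ambiguity in Definition \ref{WT} is on the main diagonal, where both cases $i \leq j$ and $i \geq j$ apply. Thus $\mathbf{WT}(M)$ is well-defined precisely when the two formulas agree whenever $i = j$, that is, when
$$\sum_{\substack{a \leq i \\ b > i}} M_{a,b} = \sum_{\substack{a > i \\ b \leq i}} M_{a,b} \qquad \text{for every } i \in \{1, \dots, n-1\}.$$
Write $U_i$ for the left-hand side (the sum over the upper-right block) and $L_i$ for the right-hand side (the sum over the lower-left block). The heart of the argument is to show that the family of equalities $U_i = L_i$ is equivalent to $M \in SS_n$.

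First I would introduce the top-left block sum $S_k := \sum_{a \leq k,\, b \leq k} M_{a,b}$, together with the partial row-sum $P_k^{r}$ (the sum of the first $k$ row sums of $M$) and the partial column-sum $P_k^{c}$ (the sum of the first $k$ column sums). Splitting the first $k$ rows into the top-left $k \times k$ block and the block to its right gives $P_k^{r} = S_k + U_k$, while splitting the first $k$ columns into the same block and the block below it gives $P_k^{c} = S_k + L_k$. Subtracting these identities cancels $S_k$ and yields the key relation $P_k^{r} - P_k^{c} = U_k - L_k$. Hence the diagonal condition $U_k = L_k$ holds for all $k \in \{1, \dots, n-1\}$ if and only if $P_k^{r} = P_k^{c}$ for all such $k$.

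To finish, I would telescope. If $P_k^{r} = P_k^{c}$ for $k = 1, \dots, n-1$, then taking successive differences shows that the $i$th row sum equals the $i$th column sum for every $i = 1, \dots, n-1$. The case $i = n$ comes for free: the total of all entries of $M$ is counted once by the rows and once by the columns, so $P_n^{r} = P_n^{c}$ automatically, and subtracting the $k = n-1$ equality gives equality of the $n$th row and column sums as well. This establishes $M \in SS_n$, and the converse is immediate, since sum-symmetry makes every $P_k^{r} = P_k^{c}$ hold and hence forces $U_k = L_k$.

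I do not anticipate a genuine obstacle here; the only point requiring care is the boundary bookkeeping---recognizing that the $n-1$ diagonal conditions, together with the automatic total-sum identity at $k = n$, are exactly enough to recover all $n$ of the row-equals-column-sum equalities, neither more nor fewer.
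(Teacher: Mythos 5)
Your proof is correct. The paper states this proposition without any proof at all---the only indication of the intended argument is the remark that the diagonal would be ``over-determined''---and your write-up is exactly the formalization of that remark: the two defining formulas overlap precisely when $i=j$, and your identity $P_k^{r}-P_k^{c}=U_k-L_k$ shows that the $n-1$ diagonal consistency conditions are equivalent to sum-symmetry, with the $k=n$ case coming for free from the total-sum identity. Nothing is missing; this supplies the detail the paper leaves implicit.
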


\begin{proposition}The $\mathbf{WT}$ map is linear and surjective with kernel the diagonal matrices.
$$\mathbf{WT}:SS_n \twoheadrightarrow \textnormal{Mat}_{n-1}$$
\end{proposition}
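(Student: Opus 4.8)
The plan is to treat the three assertions --- linearity, identification of the kernel, and surjectivity --- in turn, leaning on the matrix $A$ from Theorem~\ref{wald transform thm} (recall $C = A^{T}A$) as the main computational tool. Linearity is immediate: by Definition~\ref{WT} each entry $\mathbf{WT}(M)_{i,j}$ is a fixed integer combination of the entries of $M$, and $SS_{n}$ is itself a linear subspace of $\textnormal{Mat}_{n}$ since sum-symmetry is a system of linear equations. Hence $\mathbf{WT}\colon SS_{n}\to\textnormal{Mat}_{n-1}$ is linear, and it remains to pin down its kernel and image.

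For the kernel I would reuse the key computation inside the proof of Theorem~\ref{wald transform thm}. The arguments in its Cases~2 and~3 never used that $\pi$ is a permutation matrix --- only the telescoping of the defining sums --- so for every $M\in SS_{n}$, writing $D=\mathbf{WT}(M)$, one has $(ADA^{T})_{i,j}=-M_{i,j}$ whenever $i\neq j$. Consequently, if $\mathbf{WT}(M)=0$ then $ADA^{T}=0$ and every off-diagonal entry of $M$ vanishes, so $M$ is diagonal. Conversely a diagonal matrix lies in $SS_{n}$ and is killed by $\mathbf{WT}$: in the formula of Definition~\ref{WT} no admissible index pair with $i\le j$, $a\le i$, $b>j$ (nor with $i\ge j$, $a>i$, $b\le j$) can satisfy $a=b$. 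This establishes that $\ker\mathbf{WT}$ is exactly the space of diagonal matrices.

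For surjectivity I would produce an explicit section $s\colon\textnormal{Mat}_{n-1}\to SS_{n}$ given by $s(N)=-ANA^{T}$. First, because every column of $A$ sums to $0$ we have $\mathbf{1}_{n}^{T}A=0$ and $A^{T}\mathbf{1}_{n}=0$, so $-ANA^{T}$ has all row and all column sums equal to $0$; in particular it is sum-symmetric, so $s(N)\in SS_{n}$. Now apply the off-diagonal identity to $M=s(N)$: the matrices $A\,\mathbf{WT}(M)\,A^{T}$ and $ANA^{T}$ agree off the diagonal (both have $(i,j)$-entry $-M_{i,j}$ there, using $M=-ANA^{T}$), and both have vanishing row and column sums, so --- a matrix with zero line sums being determined by its off-diagonal part --- they agree everywhere. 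Finally $A$ has full column rank $n-1$, whence $X\mapsto AXA^{T}$ is injective on $\textnormal{Mat}_{n-1}$, and therefore $\mathbf{WT}(s(N))=N$.

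The step I expect to carry the real weight is the off-diagonal recovery identity $(ADA^{T})_{i,j}=-M_{i,j}$, that is, recognizing that the computation of Theorem~\ref{wald transform thm} is genuinely about the linear map and not about permutations; once that is in hand the kernel computation and the section are short. As a cross-check (and an alternative route to surjectivity) one can count dimensions: the sum-symmetry equations $r_{i}=c_{i}$ have exactly one dependency (their sum is $0=0$), so $\dim SS_{n}=n^{2}-n+1$, and with $\dim\ker\mathbf{WT}=n$ the image must have dimension $(n-1)^{2}=\dim\textnormal{Mat}_{n-1}$.
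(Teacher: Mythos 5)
The paper states this proposition without any proof, so there is no argument of the author's to compare yours against; what you have written is a correct and complete justification that fills that gap. Your key observation is the right one: Cases~2 and~3 of the proof of Theorem~\ref{wald transform thm} are pure telescoping and use only the well-definedness of $D$ on the diagonal (i.e.\ sum-symmetry), never that $\pi$ is a permutation, so the identity $(ADA^{T})_{i,j}=-M_{i,j}$ for $i\neq j$ holds for every $M\in SS_{n}$ with $D=\mathbf{WT}(M)$. From there the kernel computation is immediate in both directions (a diagonal matrix contributes no terms $M_{a,b}$ with $a\le i\le j<b$ or $b\le j\le i<a$), and your section $s(N)=-ANA^{T}$ works as claimed: $\mathbf{1}^{T}A=0$ and $A^{T}\mathbf{1}=0$ put $s(N)$ in $SS_{n}$ with all line sums zero, a matrix with vanishing line sums is recovered from its off-diagonal part, and the full column rank of $A$ (equivalently, invertibility of $C=A^{T}A$) makes $X\mapsto AXA^{T}$ injective, giving $\mathbf{WT}(s(N))=N$. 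The dimension count $\dim SS_{n}=n^{2}-n+1$ (the $n$ functionals $r_{i}-c_{i}$ having exactly the one relation that they sum to zero) is also right and provides a clean independent confirmation of surjectivity given the kernel. I see no gaps.
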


\begin{theorem}
The restriction of the Waldspurger transform to alternating sign matrices has as its image all $M \in \textnormal{Mat}_{n-1}$ such that columns and rows of $M$ are UM vectors with maximums on the diagonal.  Component-wise comparison of these matrices is exactly the same order as is defined on the ASM lattice via monotone triangles.
\end{theorem}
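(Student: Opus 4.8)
My plan rests on reformulating the Waldspurger transform of any matrix whose row and column sums are all $1$ in terms of its corner-sum matrix. Writing $C_{i,j}=\sum_{a\le i,\,b\le j}M_{a,b}$ (note every ASM has all line sums equal to $1$, hence lies in $SS_n$, so $\mathbf{WT}(M)$ is defined), I would first establish the identity $\mathbf{WT}(M)_{i,j}=\min(i,j)-C_{i,j}$ for $1\le i,j\le n-1$. This is immediate: for $i\le j$ the first $i$ rows contain exactly $i$ ones, so $\sum_{a\le i,\,b>j}M_{a,b}=i-C_{i,j}$, and dually for $i\ge j$ the first $j$ columns give $\sum_{a>i,\,b\le j}M_{a,b}=j-C_{i,j}$. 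This single identity drives the whole argument, converting every statement about $\mathbf{WT}(M)$ into a statement about $C$, for which the standard Robbins--Rumsey characterization of corner-sum matrices of alternating sign matrices is available \cite{asm}.

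For the forward inclusion I would show directly that each column $v_\bullet=\mathbf{WT}(M)_{\bullet,j}$ is a UM vector with maximum on the diagonal. Differencing the identity gives $v_i-v_{i-1}=\sum_{b>j}M_{i,b}$ for $i\le j$ and $v_i-v_{i-1}=-\sum_{b\le j}M_{i,b}$ for $i>j$; since every partial row sum of an ASM lies in $\{0,1\}$, these differences lie in $\{0,1\}$ above the diagonal and in $\{-1,0\}$ below it, while the endpoint entries $v_1$ and $v_{n-1}$ are themselves partial line sums, hence in $\{0,1\}$. This is exactly the UM-with-maximum-on-the-diagonal condition. The same statement for rows follows from the transpose symmetry $\mathbf{WT}(M^{T})=\mathbf{WT}(M)^{T}$ (a one-line check from Definition \ref{WT}) together with the fact that $M^{T}$ is again an ASM.

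For the reverse inclusion, given $N\in\mathrm{Mat}_{n-1}$ whose rows and columns are UM vectors with maxima on the diagonal, I would set $C_{i,j}:=\min(i,j)-N_{i,j}$, extend by the boundary values $C_{0,j}=C_{i,0}=0$, $C_{n,j}=j$, $C_{i,n}=i$, and then verify the two corner-sum conditions $C_{i,j}-C_{i-1,j}\in\{0,1\}$ and $C_{i,j}-C_{i,j-1}\in\{0,1\}$; the associated inclusion--exclusion matrix $M_{i,j}=C_{i,j}-C_{i-1,j}-C_{i,j-1}+C_{i-1,j-1}$ is then the unique ASM with corner sums $C$, and by the identity above $\mathbf{WT}(M)=N$. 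I expect this verification to be the main obstacle, because $\min(i,j)$ changes regime exactly at the diagonal: the column differences split into the case $i\le j$, where $C_{i,j}-C_{i-1,j}=1+N_{i-1,j}-N_{i,j}$ forces use of the increasing branch of the UM condition, and the case $i>j$, where $C_{i,j}-C_{i-1,j}=N_{i-1,j}-N_{i,j}$ uses the decreasing branch. It is precisely the maximum-on-the-diagonal hypothesis that makes the two branches agree at $i=j$ and $i=j+1$, and it is crucial that both the rows and columns of $N$ be UM, since the row condition is exactly what secures $C_{i,j}-C_{i,j-1}\in\{0,1\}$; the boundary cases reduce to the endpoint entries of the UM vectors lying in $\{0,1\}$.

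Finally, for the order statement I would pass through the bijection between ASMs, corner-sum matrices, and monotone triangles \cite{asm}. Writing $\mu_{i,1}<\cdots<\mu_{i,i}$ for row $i$ of the monotone triangle, one has $\mu_{i,\ell}=\min\{j:C_{i,j}\ge\ell\}$, so that $|\{\ell:\mu_{i,\ell}\le j\}|=C_{i,j}$. A counting argument then shows that the entrywise comparison $\mu_{i,\ell}(M)\le\mu_{i,\ell}(M')$ for all $i,\ell$ is equivalent to $C_{i,j}(M)\ge C_{i,j}(M')$ for all $i,j$, which by the identity $\mathbf{WT}(M)_{i,j}=\min(i,j)-C_{i,j}$ is in turn equivalent to $\mathbf{WT}(M)_{i,j}\le\mathbf{WT}(M')_{i,j}$ for all $i,j$. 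Hence component-wise comparison of Waldspurger matrices coincides with the entrywise order on monotone triangles, which by Theorem \ref{lascoux} is exactly the ASM-lattice order.
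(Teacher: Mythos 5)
Your proof is correct, but it runs along a genuinely different track from the paper's. The paper (in Section \ref{MT lattice section}) works with \emph{reduced monotone triangles} and an explicit ``painting'' map that deposits each entry of the reduced triangle as a left-to-right stroke in the Waldspurger matrix, verifying the UM conditions from the defining inequalities of monotone triangles and establishing surjectivity via an inverse ``peeling'' algorithm; monotonicity of the order is then read off from the painting construction. You instead route everything through the corner-sum matrix $C_{i,j}=\sum_{a\le i,b\le j}M_{a,b}$ and the single identity $\mathbf{WT}(M)_{i,j}=\min(i,j)-C_{i,j}$, which reduces the forward inclusion to the fact that partial line sums of an ASM lie in $\{0,1\}$, reduces surjectivity to the Robbins--Rumsey characterization of corner-sum matrices, and makes the order statement an immediate sign flip. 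Your case analysis at the diagonal ($C_{i,j}-C_{i-1,j}=1+N_{i-1,j}-N_{i,j}$ for $i\le j$ versus $N_{i-1,j}-N_{i,j}$ for $i>j$) is exactly where the maximum-on-the-diagonal hypothesis is consumed, and it checks out, as do the boundary cases. What your approach buys is a shorter, more easily verified argument in which every step is a linear identity or a two-valued difference; what the paper's approach buys is the explicit painting/peeling bijection with reduced monotone triangles, which it reuses later (e.g.\ the peeling of UM vectors in its figures and the discussion of bigrassmannian elements), and which your corner-sum formulation does not directly provide. Either argument suffices for the theorem as stated.
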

\begin{proof}
See the next subsection.
\end{proof}


\subsection{The Lattice of Monotone Triangles}\label{MT lattice section}
We have a bijection between ASMs and generalized Waldspurger matrices and would like to show that componentwise comparison of generalized Waldspurger matrices is the same partial order as the componentwise comparison of monotone triangles (and is thus Dedekind-MacNeille completion of Bruhat order by Theorem \ref{lascoux}).  To this end, we consider the well-known bijection between monotone triangles and alternating sign matrices \cite{2014arXiv1408.5391S} obtained by letting the $k$th row of the triangle equal the positions of $1$'s in the sum of the first $k$ rows of an alternating sign matrix.  In particular, the identity matrix will always correspond to the monotone triangle
$$\begin{matrix}
1\\
1&2\\
\cdots\\
1&2&\cdots& n\\
\end{matrix}.$$
  Because this is the $\hat{0}$ in the lattice of monotone triangles and the partial order is componentwise comparison, we may consider \emph{reduced monotone triangles} by subtracting this triangle from all of the others (see figure \ref{four in the same}).

We will explicitly describe the composition of these two bijections and show that it is a poset isomorphism, preserving component-wise comparison. The map is easy to describe, but it will take a little work to verify that it is well-defined and surjective.
The map, from monotone triangles to Waldspurger matrices is as follows:  Subtract off the monotone triangle corresponding to the identity permutation, and then consider the entries of this reduced monotone triangle as ``painting instructions.''  The $(i,j)$th entry of the reduced triangle tells us how much paint to load our brush with for a left-to-right stroke beginning at the $(i,j)$th entry of the corresponding Waldspurger matrix.
As a working example, consider Figure \ref{four in the same}.  The two at the top of the reduced triangle is ``painted''
onto the $(1,1)$ and $(1,2)$ entries of the associated Waldspurger matrix.  The one in the next row is painted onto the $(2,1)$ entry, and the two after it is painted onto the $(2,2)$ and $(2,3)$ entries.

We must check that our painting gives a matrix with unimodal rows and columns with maximums on the diagonal.  The left-to-right painting process ensures that the entries in each row of the Waldspurger matrix will increase weakly by one up to the diagonal.  The fact that rows of the reduced triangle are weakly increasing guarantees that the row of the Waldspurger matrix will be weakly decreasing by ones after the diagonal.  The conditions on the columns are a bit more disguised, but the fact that reduced monotone triangles increase weakly up columns guarantees that the columns of the Waldspurger matrix will increase weakly up to the diagonal. Finally, the fact that reduced monotone triangles decrease by at most one in the $\searrow$ direction, guarantees that the columns of the Waldspurger matrix will decrease weakly above the diagonal.
This follows from induction on the size of the monotone triangle.  Suppose that the lower-left corner or the monotone trianges maps onto a generalized Waldspurger matrix of dimension one less.  Then painting a new diagonal will preserve the unimodality in rows and columns, and keep the maximums on the diagonal.

This painting map has an inverse ``peeling'' operation.  UM vectors by themselves are not in bijection with rows of reduced monotone triangles, but, if one knows that the UM vector is to appear in row $k$, our painting map will have an inverse ``peeling'' operation into $k$ entries as seen in Figure \ref{peel}.

To peel a UM vector into $k$ parts, create a diagram as in Figure \ref{peel} and specify $k$ starting points, one at the top of each of the $k$ columns.  First draw a path from the $k$th starting point to the end, staying as far up and to the right as possible.  Then do the same with the $(k-1)$st point.  Note that the unimodality condition on the UM vector guarantees that this path will be weakly shorter than the first one.  Continue in this way until all of the vertices are exhausted.  Record the length of the paths to get the corresponding row in the associated reduced monotone triangle.

\begin{figure}
\begin{center}
\includegraphics[scale=.45]{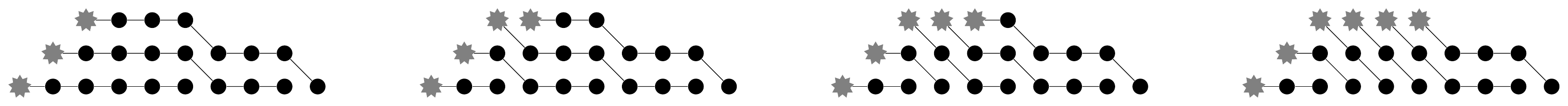}
\caption{There are four ways to peel the UM vector $1233332221$.  It may peel into three, four, five, or six parts, depending on which $3$ is on the diagonal of the Waldspurger matrix it is appearing in.}
\label{peel}
\end{center}
\end{figure}

\begin{figure}
\centering
\begin{equation*}
\left[\begin{matrix}
0&0&1&0&0&0\\
0&1&-1&1&0&0\\
0&0&0&0&1&0\\
0&0&1&0&-1&1\\
1&0&0&0&0&0\\
0&0&0&0&1&0\\
\end{matrix}\right]
\leftrightarrow
\begin{matrix}
3&&&&\\
2&4&&&\\
2&4&5&&\\
2&3&4&6\\
1&2&3&4&6\\
1&2&3&4&5&6\\
\end{matrix} \leftrightarrow
\begin{matrix}
2&&&&\\
1&2&&&\\
1&2&2\\
1&1&1&2\\
0&0&0&0&1
\end{matrix}
\end{equation*}
\includegraphics[]{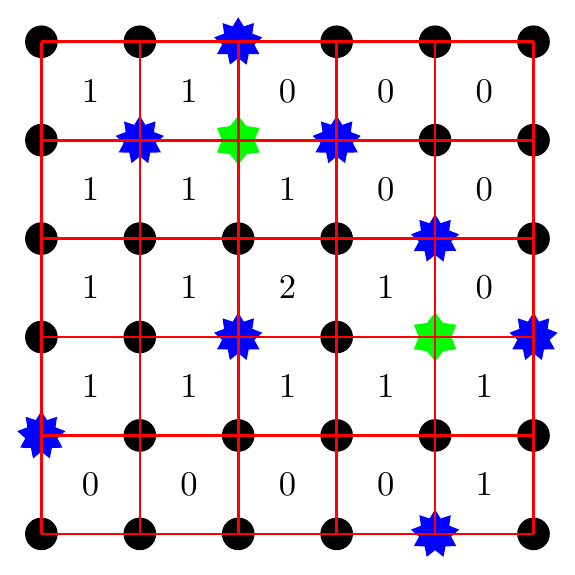}
\caption{An ASM and corresponding monotone triangle, reduced monotone triangle, and generalized Waldspurger matrix.  (The blue 9-sided stars represent 1's, and the green six-sided stars represent -1's in the transformation diagram.)}
\label{four in the same}
\end{figure}

\begin{figure}
\centering
\includegraphics[scale=1]{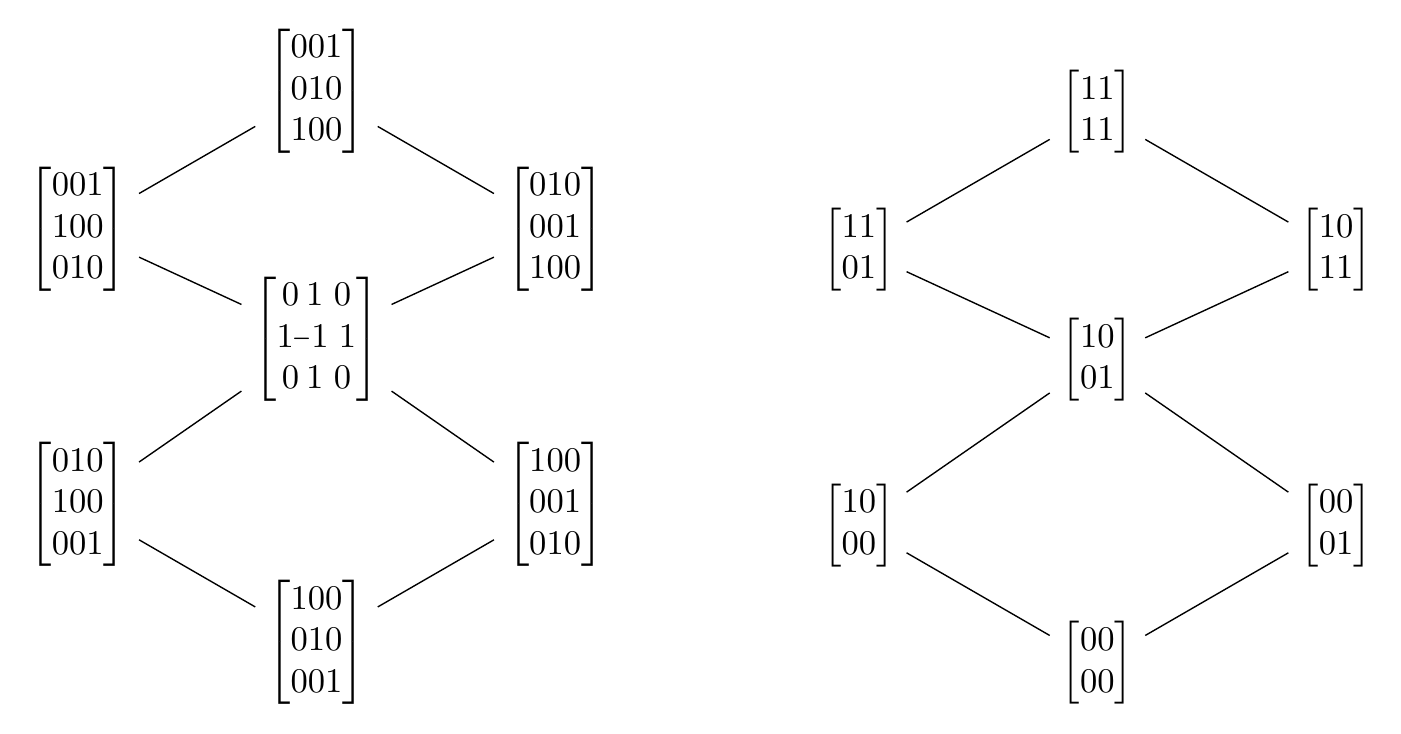}
\includegraphics[]{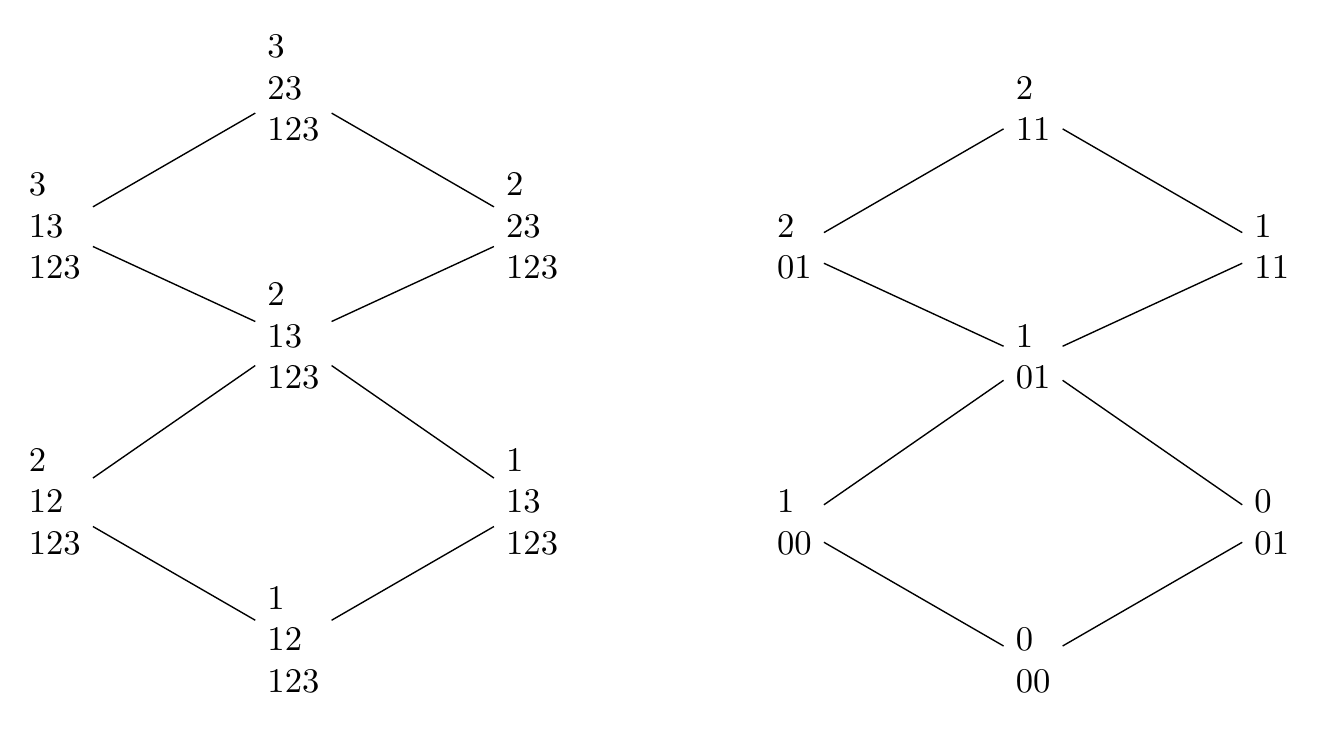}

\label{macnbruhat}
\caption{The Dedekind-MacNeille completion of Bruhat order $A_2$ viewed as ASMs, Generalized Waldspurger Matrices, monotone triangles, and reduced monotone triangles}
\end{figure}

\subsection{The Tetrahedral poset: Bigrassmannians and Join-Irreducibles}\label{tetrahedral section}
Lascoux and Sch{\"u}tzenberger showed that the Dedekind-MacNeille completion of Bruhat order for type A is a distributive lattice, and that its join-irreducible elements are exactly the bigrassmannian permutations \cite{sch} (those with a unique right descent and unique left descent). The number of bigrasmannian permutations of length $n$ is a \emph{tetrahedral number}, and is counted by the coefficients of $$\frac{1}{(1-z)^4}=1+4z+10z^2+20z^3+35z^4+\dots.$$
The relationship between the tetrahedral poset and ASMs has been studied elsewhere \cite{2014arXiv1408.5391S}, but our Waldspurger matrices provide a new prospective.  Bigrassmannian permutations correspond to Waldspurger matrices determined by fixing a single entry and then ``falling down" as quickly as possible.  More poetically, they are arrangements of oranges in a tetrahedral orange basket (held up so that one edge is parallel with the ground) so that only one orange may be removed without causing a tumble.  

\begin{figure}

\centering
$\begin{matrix}
1&1&1&1&1\\1&2&2&2&1\\1&2&3&2&1\\1&2&2&2&1\\1&1&1&1&1
\end{matrix}$
\caption{the number of ways of fixing each entry in a $5 \times 5$ Waldspurger matrix (also the top element in the Waldspurger version of the ASM lattice.)}
\label{tetrahedral}
\end{figure}

In our $A_n$ Waldspurger matrices, the number of ways of fixing a single entry to be a one is $n^2$
, to be a two is $(n-2)^2$, etc.  This sum of alternating squares is another well known formula for the tetrahedral numbers (see http://oeis.org/A000292 for more).
\subsection{Centers of Mass and Geometric Realizations of Hasse Diagrams}\label{centers of mass subsection}
Our definition for Waldspurger matrices was geometrically motivated, but we have seen that they are also very combinatorially related to the ASM lattice.  It is then natural to ask how this partial order and the geometry are connected.  One classical invariant of posets with a distinctly geometric flavor is the notion of order dimension.  The order dimension of a poset $P$ is the smallest $n$ for which $P \cong Q\subset \mathbb{R}^n$ where the elements of $Q$ are ordered componentwise.  In \cite{Reading2002}, Reading computed the order dimension of Bruhat orders for types A and B, the former being $\mathrm{dim}(A_n)=\lfloor \frac{(n+1)^2}{4}\rfloor$.  This tells us, in particular, that there is no way of embedding the lattice of $3 \times 3$ Waldspurger matrices in dimension less than $4$ in a way that preserves componentwise comparison.  On the other hand, for each of these $3 \times 3$ matrices, we have an associated simplex $\Delta_M \subset \mathbb{R}^3$ and may consider the natural map which takes $\Delta_M$ to its center of mass.

If one replaces each simplex $\Delta_{\pi}$ (where $\pi \in \mathfrak{S}_n$) with its center of mass, one gets back a translate of the vertex set of the classical permutohedron.  If one instead considers the centers of mass for each $\Delta(M)$ where $M$ is an alternating sign matrix, one obtains every interior point of the permutohedron as well; some appearing with multiplicities.  (see Figure \ref{embedded hasse a3}).  For example, the two generalized Waldspurger matrices below have the same center of mass.
$$\left[\begin{matrix}
1&0&0\\
0&1&0\\
0&1&1
\end{matrix}\right],\left[ \begin{matrix}
1&1&0\\
0&1&0\\
0&0&1
\end{matrix} \right].$$.

\begin{proposition}
The height statistic is not only the rank of an ASM $M$ in the lattice, it is also the height of the center of mass of $\Delta_M$ inside of the Meinrenken tile in the direction of $\rho$, the sum of the positive roots. 
\end{proposition}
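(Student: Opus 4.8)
The plan is to compute the center of mass explicitly from the columns of $\mathbf{WT}(M)$ and then recognize ``height in the direction of $\rho$'' as a linear functional that reads off sums of root coordinates. First I would note that, by the definition of $\Delta_M = \{\sum_j a_j c_j : a_j \ge 0,\ \sum_j a_j = 1\}$, the simplex $\Delta_M$ is the convex hull of the columns $c_1,\dots,c_{n-1}$ of $\mathbf{WT}(M)$, so its center of mass is the average $\mathrm{com}(\Delta_M) = \tfrac{1}{n-1}\sum_j c_j$. (For non-permutation ASMs the $c_j$ may fail to be affinely independent, but since $\mathbf{WT}$ is linear and Definition \ref{WT} still presents the columns as honest vectors in root coordinates, this average is the natural center of mass and the argument below is unaffected.) Because measuring height along $\rho$ is a linear functional $L$, we get $L(\mathrm{com}) = \tfrac{1}{n-1}\sum_j L(c_j)$, reducing the claim to evaluating $L$ on each column.

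The heart of the computation is that $L$ assigns the same value to every simple root. Writing the $j$-th column as $c_j = \sum_i \mathbf{WT}(M)_{ij}\,\alpha_i$ (where $\alpha_i = e_i - e_{i+1}$), linearity gives $L(c_j) = \sum_i \mathbf{WT}(M)_{ij}\,L(\alpha_i)$. Now $s_i$ permutes the positive roots other than $\alpha_i$ and sends $\alpha_i \mapsto -\alpha_i$, so $s_i\rho = \rho - 2\alpha_i$, which forces $\langle \alpha_i,\rho\rangle = 2$ for every $i$ in type $A_{n-1}$; hence the $\rho$-height of each simple root is a single constant $\kappa$ independent of $i$. Therefore $L(c_j) = \kappa\sum_i \mathbf{WT}(M)_{ij}$ is $\kappa$ times the $j$-th column sum, and summing over $j$ yields $\sum_j L(c_j) = \kappa\sum_{i,j}\mathbf{WT}(M)_{ij} = \kappa\,h(M)$. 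This identity is insensitive to whether $M$ is a permutation matrix or a general ASM, since it uses only the presentation of the columns in root coordinates, so the argument for $\mathfrak{S}_n$ extends verbatim to all alternating sign matrices.

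What remains, and is the only delicate point, is to fix the normalization of ``height'' so that the constants conspire to give equality. Combining the two displays gives $L(\mathrm{com}) = \tfrac{\kappa}{n-1}\,h(M)$, so within a fixed type $A_{n-1}$ the $\rho$-height of the center of mass is a fixed positive multiple of the rank $h(M)$. Calibrating the height unit on the $\rho$-axis to be $\tfrac{\kappa}{n-1}$ (equivalently, scaling the integer grading of the Meinrenken tile so that consecutive $\rho$-levels match the covering relations of the ASM lattice) makes this multiple equal to $1$, at which point the height of $\mathrm{com}(\Delta_M)$ is exactly $h(M)$, which by Theorem \ref{lascoux} is the rank of $M$. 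The main obstacle is thus bookkeeping rather than structural: one must verify the equal-$\rho$-height lemma for simple roots and confirm that this calibration is the one already implicit in the surrounding geometric discussion, so that ``height'' means the same thing in this proposition as it does in the rest of the Meinrenken-tile picture.
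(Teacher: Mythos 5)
Your proposal is correct and follows essentially the same route as the paper: both identify the center of mass of $\Delta_M$ with (a scalar times) the sum of the columns of $\mathbf{WT}(M)$, reduce the claim to the fact that the $\rho$-height functional takes the same value on every simple root, and conclude equality with $h(M)$ only up to a fixed normalizing scalar. The only cosmetic difference is that the paper verifies the key constancy by computing $v^{T}C\rho = v^{T}\theta$ directly with the Cartan matrix, whereas you deduce $(\alpha_i,\rho)=\kappa$ from the reflection identity $s_i\rho=\rho-2\alpha_i$; these are the same fact.
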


\begin{proof}
We want to show that projection of the center of mass of $\Delta_M$ onto $\rho$ is (up to scalar multiple) equal to the sum of the entries in $\mathbf{WT}(M)$.
By the definition of $\Delta_M$, its center of mass is a scalar times the vector of column sums of $\mathbf{WT}(M)$. We will be done if we can show that projection of a vector $v$ onto $\rho$ is (up to scalar multiple) $\rho$ times the sum of the entries of $v$.

Projection of a vector $v$ onto $\rho$ in root coordinates, is $\frac{v^TC\rho}{\rho^TC \rho}\rho$.  The denominator is just a scalar, and the numerator is 
$$v^TC\rho=v^T\left[
\begin{matrix}
2&-1&0&\dots&\dots\\
-1&2&-1&0&\dots\\
0&-1&2&-1&\dots\\
\vdots&\vdots&\vdots&\vdots&\ddots\\
0&\dots&0&-1&2
\end{matrix}\right]
\left[\begin{matrix}
n\\
2(n-1)\\
3(n-2)\\
\vdots\\
(n-2)3\\
(n-1)2\\
n
\end{matrix}\right]=v^T\theta$$ where $\theta$ is the vector of all ones. We conclude that, up to scalars, this projection is the sum of the entries of $v$.
\end{proof}
\begin{figure}
\begin{center}
\includegraphics[scale=.9]{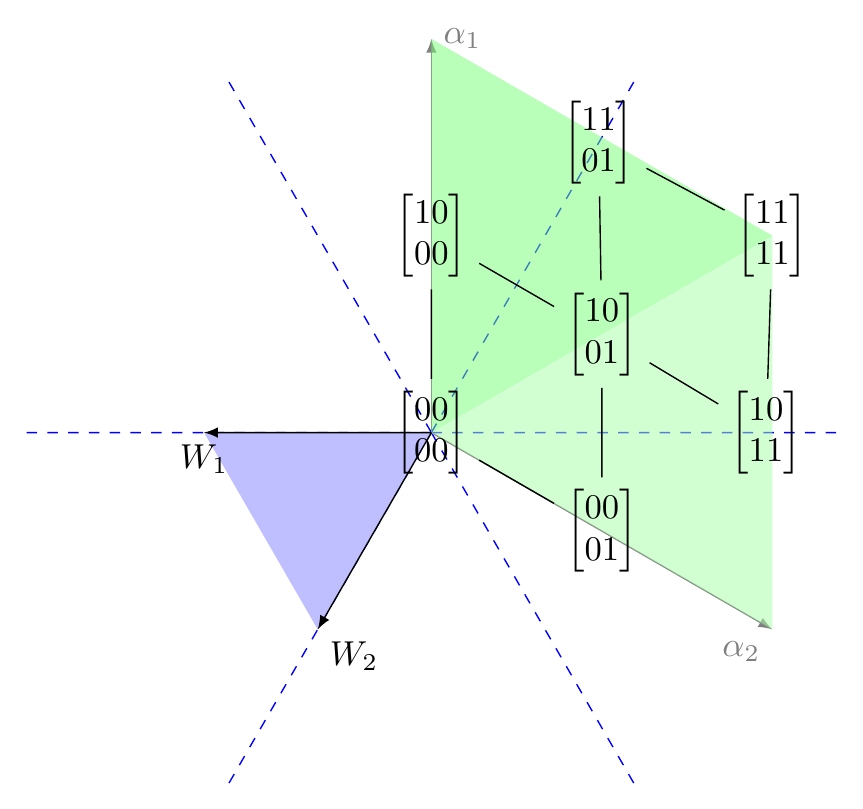}
\caption{Place $\mathbf{WT}(M)$ at the baricenter of $\Delta_M$ for each $M\in ASM$ to get a geometric realization of the Hasse diagram inside the Meinrenken tile}
\label{embedded hasse a2}
\end{center}
\end{figure}

\begin{figure}
\begin{center}
\includegraphics[scale=.75]{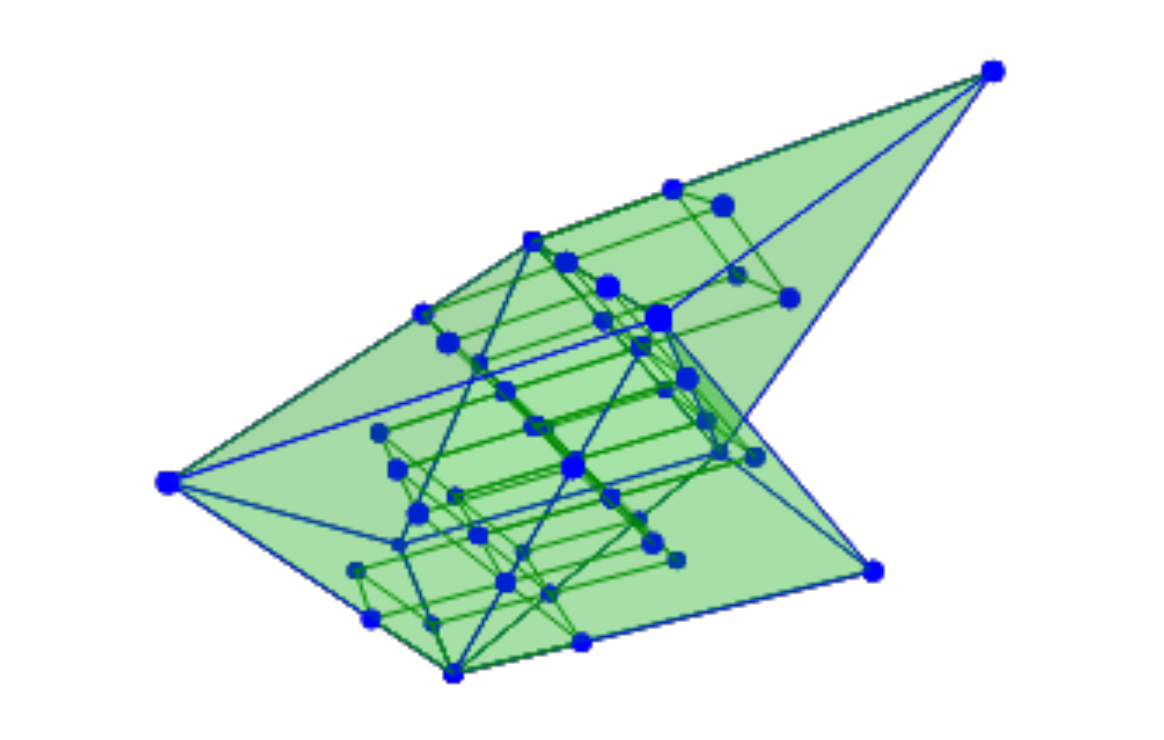}
\caption{There are 38 lattice points in the permutohedron and 42 Alternating sign matrices in this dimension.  Four of the interior points have multiplicity two.}
\label{embedded hasse a3}
\end{center}
\end{figure}


\newpage
\section{Types B and C}\label{types b&c section}
For general crystalographic root systems, $\Phi$, define the Waldspurger Transform of a Weyl group element $g$ to be the matrix
$$\mathbf{WT}_{\Phi}(g):=(Id-R_g)C_{\Phi}^{-1}$$
where $R_g$ is the matrix of $g$ in the coordinates of the simple roots of $\Phi$, and $C_{\Phi}$ is the Cartan Matrix.

If no root system is specified, we will assume type A, so that $\mathbf{WT}=\mathbf{WT}_{A}$ is the Waldspurger transform already discussed. Recall that we proved that 
$$\left[\mathbf{WT}(\pi)\right]_{i,j}= \begin{cases} 
      \displaystyle\sum_{\substack{a\leq i\\b>j}}\pi_{a,b} & i\leq j \\
      \displaystyle\sum_{\substack{a> i\\b\leq j}}\pi_{a,b} & i\geq j\\ 
   \end{cases}.
$$ 

\bigskip
It is natural to ask which phenomena we observed in type A will hold more generally.
It seems that the connection to Abelian ideals does not generalize.  One can verify that there are $2\cdot 3^{n-1}$ ``UM vectors of type B,'' but the author is unable to find any lie-theoretic interpretation of this fact.

The poset-theoretic results give more hope for generalization.  Lascoux and Sch{\"u}tzenberger showed that the Dedekind-MacNeille completion of Bruhat order for type B is a distributive lattice, and gave a description of the join-irreducible elements as a subset of the bigrassmannian elements \cite{sch}.  They showed that, while the number of bigrasmannian elements is counted by the coefficients of 
\begin{equation}\label{type b bigrass formula}\frac{1}{(1-z)^5}+\frac{1}{(1-z)^4}=1+6z+19z^2+45z^3+161z^4+\dots \end{equation}
the number of join-irreducibles or elements of the ``base'' are the \emph{octahedral numbers}:
\begin{equation}\label{type b base formula}\frac{(1+z)^2}{(1-z)^4}=1+6z+19z^2+44z^3+146z^4+\dots. \end{equation}
Geck and Kim \cite{geck1997bases} gave a more in-depth treatment of exactly when bigrassmannian elements fail to be part of the base, and Reading gave a combinatorial description of the base in terms of signed monotone triangles \cite{Reading2002}.  Recently, Anderson gave another combinatorial description of the base in terms of type B Rothe diagrams and essential sets \cite{2016arXiv161208670A}. Despite all this, the story is still a bit unsatisfying; there is no known combinatorial description for all of the elements of the Dedekind-MacNeille completion of Bruhat order for type B.  Reading's signed monotone triangles give us a means of determining whether or not a bigrassmannian element is in the base, but they are somehow not the right analog of ``type B alternating sign matrices''.  We encounter similar complications here, but we will nevertheless define type B and C Waldspurger matrices and push the theory as far as we can.

Analogous to our theorem for type A, we conjecture the following:
\begin{conjecture} Each element in the base for types B and C corresponds to a type B and C Waldspurger matrix which is componentwise least given a single fixed entry. \label{b bases conj}\end{conjecture}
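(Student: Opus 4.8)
The plan is to reduce the type $B/C$ statement to the already-understood type $A$ picture by \emph{folding}, and then to identify the base with the join-irreducibles of a fixed-point sublattice. Recall that the type $C_n$ Weyl group (equal, as a Coxeter group, to $B_n$, so that the Bruhat order and hence the base coincide; only the entries of the matrices, via the two Cartan matrices, differ) is the group of signed permutations, realized as the fixed subgroup $\mathfrak{S}_{2n}^{\sigma}$ of centrally symmetric permutations in $\mathfrak{S}_{2n}$, where $\sigma$ is the order-two automorphism induced by the flip of the $A_{2n-1}$ Coxeter diagram; the Bruhat order on $C_n$ is the restriction of the Bruhat order on $\mathfrak{S}_{2n}$ to these $\sigma$-fixed elements. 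By the folding construction for Waldspurger matrices, an $n \times n$ type $C$ matrix is a fold of the centrally symmetric $(2n-1)\times(2n-1)$ type $A$ matrix of the lifted element, so that a single entry of the folded matrix records a $\sigma$-symmetric pair of entries (or the central entry) of the type $A$ matrix. The goal is to promote this element-level folding to an isomorphism of lattices and then read off the join-irreducibles.

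First I would extend the folding from Weyl group elements to the full completion, showing that the valid type $C$ Waldspurger matrices (those whose rows and columns satisfy the folded analog of the UM conditions) are exactly the folds of the $\sigma$-invariant valid type $A$ matrices, i.e.\ the images under $\mathbf{WT}$ of the centrally symmetric alternating sign matrices in $\mathfrak{S}_{2n}$. Since folding is a linear quotient by $\sigma$, componentwise comparison of folded type $C$ matrices coincides with componentwise comparison of $\sigma$-invariant type $A$ matrices. Invoking Theorem \ref{lascoux} together with the type $A$ identification of the ASM lattice with componentwise order on Waldspurger matrices, this would exhibit the poset of type $C$ Waldspurger matrices as the fixed-point sublattice $(\mathrm{ASM}_{2n})^{\sigma}$, which I would then argue is the Dedekind--MacNeille completion of type $C$ Bruhat order, i.e.\ that passing to $\sigma$-fixed points commutes with the completion in this setting.

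With the lattice identified, Birkhoff's theorem does the combinatorial work. Since $(\mathrm{ASM}_{2n})^{\sigma}$ is the lattice of $\sigma$-stable order ideals in the poset $P$ of type $A$ join-irreducibles, its join-irreducibles are the principal ideals generated by single $\sigma$-orbits of $P$, namely the $\sigma$-orbits of bigrassmannian permutations. Under the type $A$ dictionary a bigrassmannian is precisely a single-entry-determined Waldspurger matrix, so a $\sigma$-orbit is a $\sigma$-symmetric pair of fixed entries; folding collapses this pair to a single entry, and ``falling down as fast as possible'' in the folded matrix is exactly the image of the componentwise-least $\sigma$-invariant type $A$ matrix with that orbit fixed. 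This is precisely the assertion that the base elements are the type $C$ Waldspurger matrices that are componentwise least given a single fixed entry. To confirm the identification I would check that the number of such single-entry type $C$ matrices reproduces the octahedral generating function \eqref{type b base formula} by counting $\sigma$-orbits of type $A$ bigrassmannians (fixed orbits plus swapped pairs), and I would cross-check the resulting elements against Reading's signed monotone triangles \cite{Reading2002} or Anderson's essential sets \cite{2016arXiv161208670A}; the strict containment of the base in the bigrassmannians \eqref{type b bigrass formula} should manifest as the failure of a folded orbit to stay bigrassmannian once the short/long simple reflection is taken into account.

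The main obstacle is the lattice-level step, and the paper's own observation that componentwise comparison of type $B/C$ Waldspurger matrices is \emph{not} Bruhat order on the Weyl group is precisely the source of the difficulty. One cannot simply restrict the type $A$ lattice isomorphism to $\sigma$-fixed points and expect to land on type $C$ Bruhat order; instead the folded componentwise order must be shown to agree with the order on the \emph{completion}, and one must prove that $(\mathrm{ASM}_{2n})^{\sigma}$ genuinely is the Dedekind--MacNeille completion of the folded Bruhat order rather than some other lattice sharing the same Weyl group elements. Establishing that this fixed-point sublattice is join-generated by the single-entry folded matrices, and that these are honestly join-irreducible and not artifacts created by the folding, is where the octahedral count must be reconciled against the larger bigrassmannian count, and I expect this reconciliation to be the technical heart of a full proof.
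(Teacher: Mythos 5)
The statement you set out to prove is labeled a \emph{conjecture} in the paper, and the paper offers no proof of it: the author only supplies supporting evidence, namely that the number of type $C$ Waldspurger matrices determined by a single fixed entry sums to an octahedral number (matching the known count of base elements from Equation \eqref{type b base formula}), together with an explicit $C_4$ example illustrating why the problem is delicate. So any complete argument here would be new mathematics, and your proposal has to be judged as a research plan rather than checked against an existing proof.

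As a plan, it runs into a concrete obstruction that the paper itself documents. Your central step is to identify the poset of type $C$ Waldspurger matrices with the $\sigma$-fixed-point sublattice $(\mathrm{ASM}_{2n})^{\sigma}$ under componentwise order, argue that this is the Dedekind--MacNeille completion of type $C$ Bruhat order, and then let Birkhoff's theorem hand you the join-irreducibles as $\sigma$-orbits of type $A$ bigrassmannians. But the paper records that folded $8\times 8$ centrally symmetric ASMs \emph{fail to be a lattice} under componentwise order, and that Waldspurger order differs from Bruhat order for $C_n$ with $n\geq 4$ (the same failure Reading observed for signed monotone triangles). So the fixed-point poset is not the completion, and the Birkhoff step collapses exactly where you yourself flagged the risk. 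Relatedly, the phrase ``componentwise least given a single fixed entry'' hides an ambiguity that the symmetric $\sigma$-orbit picture does not resolve: in $C_4$ there are two \emph{incomparable} bigrassmannian elements both minimal subject to the $(2,2)$ entry being $2$, only one of which lies in the base. The paper's heuristic is that base elements arise by unfolding the fixed entry ``as inequitably as possible'' before running the type $A$ falling-down algorithm; pinning down that asymmetric unfolding rule --- rather than folding symmetric orbits --- is where the real work lies. Your proposed numerical cross-check against the octahedral numbers is sound, but it is essentially the evidence the paper already gives and cannot substitute for the missing lattice-theoretic step.
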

We will work our way into the type B and C combinatorics in the following subsections.  We will then provide evidence supporting conjecture, while explaining where problems arise.

\subsection{Centrally Symmetric Permutation matrices}\label{Csym subsection}

For type B, we may consider $V=\mathbb{R}^n$, and $\Phi$ consisting of all integer vectors in $V$ of length $1$ or $\sqrt[]{2}$, for a total of $2n^2$ roots. Choose the simple roots: $\alpha_i=e_i-e_{i+1}$, for $1\leq i \leq n-1$ and the shorter root $\alpha_n = e_n$.  For type C, we will have the simple roots with the exception that $\alpha_n=2\cdot e_n$.

These root systems share a Weyl group of size $2^n n!$ consisting of the $n \times n$ {\it signed permutation matrices}.  That is, the set of all $n \times n$ permutation matrices where any of the $1$'s may be replaced with $-1$'s.  We will abuse notation and use $B_n$ for both the root system of type B, and for this particular representation of the Weyl group for both types B and C.  


Call a square $n \times n$ matrix {\it centrally symmetric} if it is preserved under $180^{\circ}$ rotation; that is if $M_{i,j}=M_{n-i,n-j}$ for all $1 \leq i,j \leq n-1$.  Let $\mathfrak{CS}_n$ denote the set of centrally symmetric $n \times n$ matrices.
\begin{proposition}
The group $\mathfrak{CS}_{2n}\subset\mathfrak{S}_{2n}$ is isomorphic to the group $B_n$ of signed permutations via a ``folding move''.
\end{proposition}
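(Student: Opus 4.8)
The plan is to recognize both $\mathfrak{CS}_{2n}$ and $B_n$ as the centralizer of one and the same involution inside $\mathfrak{S}_{2n}$, which makes the isomorphism essentially tautological, and then to exhibit the ``folding move'' as the explicit dictionary between the two descriptions. First I would observe that $180^{\circ}$ rotation of a $2n \times 2n$ permutation matrix $P_\pi$ is the same as conjugating $\pi$ by the reversal permutation $w_0 : i \mapsto 2n+1-i$; concretely, rotation sends the $1$ in position $(i,\pi(i))$ to position $(2n+1-i,\,2n+1-\pi(i))$, which is exactly the graph of $w_0 \pi w_0$. Hence $P_\pi$ is centrally symmetric precisely when $\pi$ commutes with $w_0$, so that $\mathfrak{CS}_{2n}$, viewed as permutations, is the centralizer $Z_{\mathfrak{S}_{2n}}(w_0)$. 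Since $2n$ is even, $w_0 = (1,2n)(2,2n-1)\cdots(n,n+1)$ is a fixed-point-free involution, a product of $n$ disjoint transpositions.

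Next I would relabel the $2n$ rows and columns by the ordered set $\{-n, \dots, -1, 1, \dots, n\}$, chosen so that the $180^{\circ}$ rotation becomes the sign-flip involution $\iota : \ell \mapsto -\ell$ acting on both indices; in this labeling $w_0$ becomes $\iota$. A centrally symmetric permutation matrix then corresponds to a bijection $\pi$ of $\{\pm 1, \dots, \pm n\}$ satisfying $\pi(-i) = -\pi(i)$, which is exactly the defining condition of a signed permutation. Thus $Z_{\mathfrak{S}_{2n}}(w_0)$ is literally the group $B_n$ of signed permutations realized as permutations of $\{\pm 1, \dots, \pm n\}$ commuting with $\iota$. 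The folding move is the explicit passage to the $n \times n$ signed matrix: retain only the rows with positive label (the negatively labeled rows are forced by the symmetry), and fold the negatively labeled columns onto the positively labeled ones, turning each $1$ in a negatively labeled column into a $-1$ in the matching positively labeled position.

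To finish I would check that the folding move is a well-defined bijection onto the $n \times n$ signed permutation matrices and that it respects the group operation. Well-definedness amounts to showing that the folded array has exactly one nonzero entry in each row and column: each retained row contributes one signed entry, and the condition $\pi(-i) = -\pi(i)$ rules out a column collision, since $|\pi(i)| = |\pi(j)|$ for distinct positive $i,j$ would force $\{\pi(i),\pi(j)\} = \{a,-a\}$ and hence $\pi(-j) = \pi(i)$ with $-j \neq i$, contradicting injectivity. The homomorphism property then needs no computation: under the relabeling, composing centrally symmetric permutations is just composition in $\mathfrak{S}_{2n}$, and this matches multiplication of the corresponding signed permutation matrices, so the folding move is a group isomorphism.

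The main obstacle is purely bookkeeping: pinning down the labeling so that rotation becomes the sign flip, and confirming that the fold introduces no column collisions and assigns signs consistently. Once the problem is phrased as ``centralizer of a fixed-point-free involution,'' the order count $|Z_{\mathfrak{S}_{2n}}(w_0)| = 2^n n! = |B_n|$ and the isomorphism fall out immediately, and the folding move only has to be verified to implement this identification faithfully.
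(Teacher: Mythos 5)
Your proof is correct, and while the folding map at its core is the same as the paper's, you reach it by a genuinely different route. The paper simply writes down the fold $\pi^{\star}_{i,j}=\pi_{i,j}-\pi_{2n-i+1,j}$ and observes that, because $\pi$ is a permutation matrix, the two folded entries can never both be nonzero, so the map is invertible; the verification that the result is a signed permutation matrix and that the bijection respects the group structure is left implicit. You instead first identify $\mathfrak{CS}_{2n}$ as the centralizer $Z_{\mathfrak{S}_{2n}}(w_0)$ of the fixed-point-free involution $w_0:i\mapsto 2n+1-i$, relabel the index set as $\{\pm 1,\dots,\pm n\}$ so that $w_0$ becomes negation, and recognize the centralizer as the standard realization of $B_n$ by bijections satisfying $\pi(-i)=-\pi(i)$. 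This buys you the homomorphism property essentially for free (it is composition on both sides of the dictionary), which is the one point the paper's proof does not address, and your injectivity argument ruling out column collisions ($|\pi(i)|=|\pi(j)|$ for distinct positive $i,j$ would force $\pi(-j)=\pi(i)$) is a clean substitute for the paper's ``no collisions'' remark. The only cosmetic difference is orientation: you retain positive rows and fold columns, whereas the paper retains the first $n$ columns and folds rows; these are mirror images of the same construction and both land in the signed permutation matrices.
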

\begin{proof}
If $\pi$ is a $2n \times 2n$ centrally symmetric permutation matrix, we may ``fold'' it to obtain $\pi^{\star}$, a signed permutation on $n$, by letting $$\pi^{\star}_{i,j}=
\pi_{i,j}-\pi_{2n-i+1,j}$$
The map is invertible because $\pi$ was a permutation matrix, meaning that 
$$\pi^{\star}_{i,j}=\begin{cases}
1 & \textrm{ if }\pi_{i,j}=1\\
-1 & \textrm{ if }\pi_{2n-i+1,j}=1\\
0 \textrm{ otherwise}
\end{cases}$$
i.e. there will never be any collisions in the folding.
\end{proof}


\bigskip

We may also consider a similar ``folding map'' on the centrally symmetric type A Waldspurger matrices. 
$$\mathcal{F}:\mathbf{WT}_{A_{2n-1}}(\mathfrak{CS_{2n}})\longrightarrow \textrm{Mat}_n$$
Where $$\mathcal{F}(M)_{i,j}=\begin{cases}M_{i,j}+M_{2n-i+1,j}\textrm{ for all }1\leq i,j < n\\
M_{i,j} \textrm{ for all } i=n, j\leq n
\end{cases}$$

\begin{subsection}{Working in root coordinates}\label{b in rootcoords subsection}
Let $P$ be the change of basis matrix that gives the simple roots of $B_n$ in terms of the standard basis vectors, and define $Q$ analogously for $C_n$.  From the discussion above, we have:
$$P_{i,j}=\begin{cases}
1 & \textrm{ if }i=j\\
-1 & \textrm{ if }i=j+1\\
0 & \textrm{ otherwise}
\end{cases} \hspace{20pt}
Q_{i,j}=\begin{cases}
1 & \textrm{ if }i=j, j<n\\
-1 & \textrm{ if }i=j+1\\
2&\textrm{ if }i=j=n\\
0 & \textrm{ otherwise}
\end{cases}.$$
One can then verify that
$$
P_{i,j}^{-1}=\begin{cases}
1 & \textrm{ if }i\geq j\\
0 & \textrm{ otherwise}
\end{cases}\hspace{20pt}
Q_{i,j}^{-1}=\begin{cases}
1 & \textrm{ if }i=j, j<n\\
1/2& \textrm{ if }i=j=n\\
0 & \textrm{ otherwise}
\end{cases}.
$$
With respect to this ordering on the simple roots, one can further verify that the inverses of the Cartan matrices for the root systems $B_n$ and $C_n$ are, respectively:

$$(C_{B_{n}}^{-1})_{i,j}=\begin{cases}
\textrm{min}(i,j) & \textrm{ if } j<n\\
i/2 & \textrm{ if } j=n
\end{cases}\hspace{20pt}
(C_{C_{n}}^{-1})_{i,j}=\begin{cases}
\textrm{min}(i,j) & \textrm{ if } i<n\\
j/2 & \textrm{ if } i=n
\end{cases}.
$$
Next, if we let $S=Q(C_{C_{n}}^{-1})$, and let $R=P(C_{B_{n}}^{-1}),$ one may verify that
$$S=\begin{cases}
1 & \textrm{ if }j\geq i\\
0 & \textrm{ otherwise}
\end{cases}\hspace{20pt}
R=\begin{cases}
1&\textrm{ if }j\geq i, j\neq n\\
1/2&\textrm{ if }j=n\\
0&\textrm{ otherwise}
\end{cases}.
$$
\end{subsection}
\begin{theorem}
$\mathcal{F}$ is a bijection between centrally symmetric Waldspurger Matrices of type $A_{2n-1}$, and Waldspurger Matrices of type $C_n$.  
and the following diagram commutes:

\[ \begin{tikzcd}
B_n \arrow{r}{\mathbf{WT}_{C_n}}  & \mathbf{WT}_{C_n}(B_n)  \\%
\mathfrak{CS}_{2n} \arrow{r}{\mathbf{WT}}\arrow[swap]{u}{\star}& \mathbf{WT}(\mathfrak{CS_{2n}})\subset UM_{n-1}\arrow{u}{\mathcal{F}}
\end{tikzcd}
\]
\end{theorem}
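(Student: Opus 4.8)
The plan is to split the statement into its two assertions — commutativity of the square and bijectivity of $\mathcal{F}$ — and to note at the outset that the second follows formally from the first. Indeed $\star\colon\mathfrak{CS}_{2n}\to B_n$ is a bijection by the folding proposition, and both $\mathbf{WT}$ and $\mathbf{WT}_{C_n}$ are injective on group elements: the columns of either Waldspurger matrix are the images of a basis (the fundamental weights) under $Id-w$, so the matrix determines $Id-w$ and hence $w$. Granting commutativity, $\mathbf{WT}_{C_n}\circ\star=\mathcal{F}\circ\mathbf{WT}$ is a bijection from $\mathfrak{CS}_{2n}$ onto $\mathbf{WT}_{C_n}(B_n)$, and since $\mathbf{WT}$ is a bijection onto its image, $\mathcal{F}$ restricted to $\mathbf{WT}(\mathfrak{CS}_{2n})$ is forced to be a bijection. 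I would also record here that $\mathcal{F}$ is genuinely defined on this image: the entry formula of Theorem~\ref{wald transform thm} shows directly that $\mathbf{WT}(\pi)_{i,j}=\mathbf{WT}(\pi)_{2n-i,2n-j}$ when $\pi$ is centrally symmetric, so the image consists of centrally symmetric matrices and the two mirror rows being folded always agree.

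The whole theorem thus reduces to commutativity, $\mathcal{F}(\mathbf{WT}(\pi))=\mathbf{WT}_{C_n}(\pi^{\star})$ for $\pi\in\mathfrak{CS}_{2n}$. I would write $\mathcal{F}(M)=\tilde{F}M\tilde{G}$, where $\tilde{F}$ is the $n\times(2n-1)$ matrix that adds row $i$ to its mirror row $2n-i$ for $i<n$ and keeps the middle row $n$, and $\tilde{G}$ is the $(2n-1)\times n$ matrix selecting the first $n$ columns. Using $\mathbf{WT}(\pi)=(I-P)C_{A}^{-1}$ and $\mathbf{WT}_{C_n}(\pi^{\star})=(I-R_{\pi^{\star}})C_{C_n}^{-1}$, the target factors through two independent checks. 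First, a weight-folding identity $\mathcal{F}(C_{A}^{-1})=\tilde{F}C_{A}^{-1}\tilde{G}=C_{C_n}^{-1}$: with the standard inverse Cartan entries $(C_{A}^{-1})_{ij}=\min(i,j)-ij/(2n)$, folding rows $i$ and $2n-i$ cancels the $ij/(2n)$ corrections and yields $\min(i,j)$ for $i<n$ and $j/2$ for the middle row, which is exactly the $C_{C_n}^{-1}$ computed in Subsection~\ref{b in rootcoords subsection}. Second, an equivariance identity $\tilde{F}P=R_{\pi^{\star}}\tilde{F}$; combining the two gives
\[
\mathcal{F}(\mathbf{WT}(\pi))=\tilde{F}(I-P)C_{A}^{-1}\tilde{G}=(I-R_{\pi^{\star}})\tilde{F}C_{A}^{-1}\tilde{G}=(I-R_{\pi^{\star}})C_{C_n}^{-1}=\mathbf{WT}_{C_n}(\pi^{\star}).
\]

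The equivariance identity is where the real content sits, and it is the step I expect to be the main obstacle. I would obtain it by transporting the standard-coordinate folding into root coordinates. Let $F\colon\mathbb{R}^{2n}\to\mathbb{R}^{n}$ be the linear fold $e_a\mapsto e_a$ for $a\le n$ and $e_a\mapsto -e_{2n+1-a}$ for $a>n$; since $F$ annihilates the all-ones vector it descends to the reflection representation. A short computation with central symmetry gives the intertwining $F\pi=\pi^{\star}F$, which repackages the folding proposition. Writing $A$ for the root-to-standard embedding of Theorem~\ref{wald transform thm} and $Q$ for the $C_n$ change of basis, one has $\pi A=AP$ and $\pi^{\star}=QR_{\pi^{\star}}Q^{-1}$, so $Q(Q^{-1}FA)P=F\pi A=\pi^{\star}FA=QR_{\pi^{\star}}(Q^{-1}FA)$; cancelling $Q$ shows that the root-coordinate fold $\hat{F}:=Q^{-1}FA$ satisfies $\hat{F}P=R_{\pi^{\star}}\hat{F}$ automatically.

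The remaining — and most delicate — task is then the bookkeeping identity $\hat{F}=\tilde{F}$, i.e. that pushing the geometric fold $F$ through the explicit matrices $A$ and $Q^{-1}$ reproduces the purely combinatorial row-fold $\tilde{F}$; this is a direct but fiddly computation with the bidiagonal matrices recorded in Subsection~\ref{b in rootcoords subsection}, and it is the only place where types $B$ and $C$ are distinguished (the middle root $e_n-e_{n+1}$ folds to $2e_n$, the long $C$-root). An entirely equivalent route, more in the entrywise spirit of Theorem~\ref{wald transform thm}, would be to expand $(I-R_{\pi^{\star}})C_{C_n}^{-1}$ into a closed entry formula, substitute $\pi^{\star}_{a,b}=\pi_{a,b}-\pi_{2n+1-a,b}$, and match it termwise against the folded sum $\mathbf{WT}(\pi)_{i,j}+\mathbf{WT}(\pi)_{2n-i,j}$; either way the crux is reconciling the folding of the root system with the naive folding of matrices.
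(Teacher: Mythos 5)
Your argument is correct, but it is organized quite differently from the paper's. The paper proves only the commutativity of the square, and does so by brute force at the level of entries: it expands $\mathbf{WT}_{C_n}(\pi^{\star})_{i,j}=\left(C_{C_n}^{-1}-Q^{-1}\pi^{\star}S\right)_{i,j}$ into partial sums $\sum_{a\le i,\,b\le j}\pi^{\star}_{a,b}$, substitutes $\pi^{\star}_{a,b}=\pi_{a,b}-\pi_{2n-a+1,b}$, and matches the resulting region sums against $\mathbf{WT}(\pi)_{i,j}+\mathbf{WT}(\pi)_{2n-i+1,j}$ case by case with pictures --- exactly the ``entirely equivalent route'' you mention at the end. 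Your primary route instead factors the identity as (i) the intertwining $F\pi=\pi^{\star}F$ for the standard-coordinate fold $F$, which makes $\hat{F}P=R_{\pi^{\star}}\hat{F}$ automatic, (ii) the computation $\hat{F}=Q^{-1}FA=\tilde{F}$, and (iii) the inverse-Cartan identity $\tilde{F}C_{A}^{-1}\tilde{G}=C_{C_n}^{-1}$; I checked (ii) and (iii) against the explicit matrices in the paper and both hold (column $i$ of $FA$ is the $C_n$-simple root $\alpha_{\min(i,2n-i)}$, with $\alpha_n=2e_n$ appearing at $i=n$, and the $ij/2n$ corrections in $(C_{A_{2n-1}}^{-1})_{ij}=\min(i,j)-ij/2n$ do cancel under the fold). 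Your version buys conceptual clarity --- it isolates where the long root enters and incidentally pins down the correct mirror index as $2n-i$ rather than the paper's $2n-i+1$ --- at the cost of having to verify the two linear-algebra identities once; the paper's version is self-contained and visual but obscures why the computation works. You also supply something the paper omits entirely: the deduction of the bijectivity claim from commutativity via injectivity of both $\mathbf{WT}$ maps and bijectivity of $\star$. The only loose end there is identifying ``centrally symmetric Waldspurger matrices'' with $\mathbf{WT}(\mathfrak{CS}_{2n})$ in both directions; you prove that the image is centrally symmetric, and the converse follows from the equivariance of the $\mathbf{WT}$ formula under $180^{\circ}$ rotation together with injectivity, which is worth one added sentence.
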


\begin{proof}  We will show that $\mathcal{F}(\mathbf{WT}(\pi))_{i,j}$ and $\mathbf{WT}_{C_n}(\pi^{\star})_{i,j}$ are summing over the same parts of the permutation matrix $\pi$.  On the one hand,
\begin{gather}
\begin{align*}
\mathcal{F}(\mathbf{WT}(\pi))_{i,j} &=\begin{cases}\mathbf{WT}(\pi)_{i,j}+\mathbf{WT}(\pi)_{2n-i+1,j}\textrm{ for all }1\leq i,j < n\\
\mathbf{WT}(\pi)_{i,j} \textrm{ for all } i=n, j\leq n
\end{cases}\\
&=\begin{cases} 
      \displaystyle\sum_{\substack{a\leq i\\b>j}}^{2n}\pi_{a,b} +\displaystyle\sum_{\substack{a> 2n-i+1\\b\leq j}}^{2n}\pi_{a,b} & i\leq j<n \\
      \displaystyle\sum_{\substack{a> i\\b\leq j}}^{2n}\pi_{a,b} +\displaystyle\sum_{\substack{a> 2n-i+1\\b\leq j}}^{2n}\pi_{a,b} & j\leq i<n\\ 
      \displaystyle\sum_{\substack{a> i\\b\leq j}}^{2n}\pi_{a,b} & i=n
\end{cases}
\end{align*}.
\end{gather}
On the other hand,
\begin{align*}
\mathbf{WT}_{C_n}(\pi^{\star})_{i,j} &= \left(Id-(Q^{-1} \pi^{\star} Q)C_{C_{n}}^{-1}\right)_{i,j}\\
&=\left(C_{C_{n}}^{-1}-(Q^{-1} \pi^{\star} S)\right)_{i,j}\\
&=\left(C_{C_{n}}^{-1}\right)_{i,j}-\left((Q^{-1} \pi^{\star} S)\right)_{i,j}\\
&=\begin{cases}
\textrm{min}(i,j) & \textrm{ if } i<n\\
j/2 & \textrm{ if } i=n
\end{cases}
- \begin{cases}\displaystyle\sum_{a\leq i,b\leq j}\pi^{\star}_{a,b}& \textrm{if }i<n\\
\frac{1}{2}\displaystyle\sum_{a\leq i,b\leq j}\pi^{\star}_{a,b}& \textrm{if }i=n\end{cases}\\
&=\begin{cases}
\textrm{min}(i,j)-\displaystyle\sum_{a\leq i,b\leq j}^{2n}\pi_{a,b}-\pi_{2n-a+1,b} & \textrm{ if } i<n\\
\frac{j}{2} -\frac{1}{2}\displaystyle\sum_{a\leq i,b\leq j}^{2n}\pi_{a,b}-\pi_{2n-a+1,b} & \textrm{ if } i=n
\end{cases}\\
&=\begin{cases}
i-\displaystyle\sum_{a\leq i,b\leq j}^{2n}\pi_{a,b}-\pi_{2n-a+1,b} & \textrm{ if } i\leq j<n\\
j-\displaystyle\sum_{a\leq i,b\leq j}^{2n}\pi_{a,b}-\pi_{2n-a+1,b} & \textrm{ if } j\leq i<n\\
\frac{j}{2} -\frac{1}{2}\displaystyle\sum_{a\leq i,b\leq j}^{2n}\pi_{a,b}-\pi_{2n-a+1,b} & \textrm{ if } i=n
\end{cases}\\
&=\begin{cases}
\displaystyle\sum_{a\leq i}^{2n}\pi_{a,b}-\displaystyle\sum_{a\leq i,b\leq j}^{2n}\pi_{a,b}+\displaystyle\sum_{\substack{a> 2n-i+1\\b\leq j}}^{2n}\pi_{a,b} & \textrm{ if } i\leq j<n\\
\displaystyle\sum_{b\leq j}^{2n}\pi_{a,b}-\displaystyle\sum_{a\leq i,b\leq j}^{2n}\pi_{a,b}+\displaystyle\sum_{\substack{a> 2n-i+1\\b\leq j}}^{2n}\pi_{a,b} & \textrm{ if } j\leq i<n\\
\frac{j}{2} -\frac{1}{2}\displaystyle\sum_{a\leq i,b\leq j}^{2n}\pi_{a,b}-\pi_{2n-a+1,b} & \textrm{ if } i=n
\end{cases}\\
&=\begin{cases} 
      \displaystyle\sum_{\substack{a\leq i\\b>j}}^{2n}\pi_{a,b} +\displaystyle\sum_{\substack{a> 2n-i+1\\b\leq j}}^{2n}\pi_{a,b} & i\leq j<n \\
      \displaystyle\sum_{\substack{a> i\\b\leq j}}^{2n}\pi_{a,b} +\displaystyle\sum_{\substack{a> 2n-i+1\\b\leq j}}^{2n}\pi_{a,b} & j\leq i<n\\ 
      \displaystyle\sum_{\substack{a> i\\b\leq j}}^{2n}\pi_{a,b} & i=n
\end{cases}.
\end{align*}

The last equality is perhaps easier to see pictorially. The case $i\leq j<n$ says

\resizebox{.21\textwidth}{!}{$\begin{pmatrix} 
\ddots & \vdots & \multicolumn{1}{c|}{\vdots} & \vdots & \udots  \\ 
\dots & \pi_{i-1,j-1} &\multicolumn{1}{c|}{ \pi_{i-1,j}} & \textrm{sum these} & \dots \\ 
\dots & \pi_{i,j-1} & \multicolumn{1}{c|}{\pi_{i,j}} & \textrm{entries} & \dots \\ \cmidrule{4-5} 
\dots & \pi_{i+1,j-1} & \pi_{i+1,j} & \pi_{i+1,j+1} & \dots \\
\udots & \vdots     & \vdots    & \vdots      & \ddots \end{pmatrix}+$}
\resizebox{.21\textwidth}{!}{$\begin{pmatrix} 
\ddots & \vdots & \vdots & \vdots & \udots  \\ 
\dots & \dots &\pi_{2n-i+1,j-1} & \pi_{2n-i+1,j} & \dots \\ 
\cmidrule{1-3}\dots & \textrm{sum these} & \multicolumn{1}{c|}{\pi_{2n-i,j-1}} & \pi_{2n-i,j} & \dots \\
\dots & \textrm{entries} & \multicolumn{1}{c|}{\pi_{2n-i-1,j-1}} & \pi_{2n-i-1,j} & \dots \\
\udots & \vdots     & \multicolumn{1}{c|}{\vdots}    & \vdots      & \ddots \end{pmatrix}=$}
\resizebox{.21\textwidth}{!}{$\begin{pmatrix} 
\ddots & \vdots & \vdots & \vdots & \udots  \\ 
\dots & \textrm{sum these} & \pi_{i-1,j} & \pi_{i-1,j+1} & \dots \\ 
\dots & \textrm{entries} & \pi_{i,j} & \pi_{i,j+1} & \dots \\
\cmidrule{1-5}\dots & \pi_{i+1,j-1} & \pi_{i+1,j} & \pi_{i+1,j+1} & \dots \\
\udots & \vdots     & \vdots    & \vdots      & \ddots \end{pmatrix}-$}
\resizebox{.21\textwidth}{!}{$\begin{pmatrix} 
\ddots & \vdots & \multicolumn{1}{c|}{\vdots} & \vdots & \udots  \\ 
\dots & \textrm{sum these} &\multicolumn{1}{c|}{ \pi_{i-1,j}} & \pi_{i-1,j+1} & \dots \\ 
\dots & \textrm{entries} & \multicolumn{1}{c|}{\pi_{i,j}} & \pi_{i,j+1} & \dots \\
\cmidrule{1-3}\dots & \pi_{i+1,j-1} & \pi_{i+1,j} & \pi_{i+1,j+1} & \dots \\
\udots & \vdots     & \vdots    & \vdots      & \ddots \end{pmatrix}+$}
\resizebox{.21\textwidth}{!}{$\begin{pmatrix} 
\ddots & \vdots & \vdots & \vdots & \udots  \\ 
\dots & \dots &\pi_{2n-i+1,j-1} & \pi_{2n-i+1,j} & \dots \\ 
\cmidrule{1-3}\dots & \textrm{sum these} & \multicolumn{1}{c|}{\pi_{2n-i,j-1}} & \pi_{2n-i,j} & \dots \\
\dots & \textrm{entries} & \multicolumn{1}{c|}{\pi_{2n-i-1,j-1}} & \pi_{2n-i-1,j} & \dots \\
\udots & \vdots     & \multicolumn{1}{c|}{\vdots}    & \vdots      & \ddots \end{pmatrix}.$}

The $j\leq i<n$ case says

\resizebox{.21\textwidth}{!}{$\begin{pmatrix} 
\ddots & \vdots & \multicolumn{1}{c|}{\vdots} & \vdots & \udots  \\ 
\dots & \textrm{sum} &\multicolumn{1}{c|}{ \pi_{i-1,j}} & \pi_{i-1,j+1} & \dots \\ 
\dots & \textrm{these} & \multicolumn{1}{c|}{\pi_{i,j}} & \pi_{i,j+1} & \dots \\
\dots & \textrm{entries} & \multicolumn{1}{c|}{\pi_{i+1,j}} & \pi_{i+1,j+1} & \dots \\
\udots & \vdots     & \multicolumn{1}{c|}{\vdots}    & \vdots      & \ddots \end{pmatrix}$}
$-$\resizebox{.21\textwidth}{!}{$\begin{pmatrix} 
\ddots & \vdots & \multicolumn{1}{c|}{\vdots} & \vdots & \udots  \\ 
\dots & \textrm{sum these} &\multicolumn{1}{c|}{ \pi_{i-1,j}} & \pi_{i-1,j+1} & \dots \\ 
\dots & \textrm{entries} & \multicolumn{1}{c|}{\pi_{i,j}} & \pi_{i,j+1} & \dots \\
\cmidrule{1-3}\dots & \pi_{i+1,j-1} & \pi_{i+1,j} & \pi_{i+1,j+1} & \dots \\
\udots & \vdots     & \vdots    & \vdots      & \ddots \end{pmatrix}$}$+$
\resizebox{.21\textwidth}{!}{$\begin{pmatrix} 
\ddots & \vdots & \multicolumn{1}{c|}{\vdots} & \vdots & \udots  \\ 
\dots & \pi_{2n-i-1,j-1} &\multicolumn{1}{c|}{ \pi_{2n-i-1,j}} & \textrm{sum these} & \dots \\ 
\dots & \pi_{2n-i,j-1} & \multicolumn{1}{c|}{\pi_{2n-i,j}} & \textrm{entries} & \dots \\ \cmidrule{4-5} 
\dots & \pi_{2n-i+1,j-1} & \pi_{2n-i+1,j} & \pi_{2n-i+1,j+1} & \dots \\
\udots & \vdots     & \vdots    & \vdots      & \ddots \end{pmatrix}$}$=$
\resizebox{.21\textwidth}{!}{$\begin{pmatrix} 
\ddots & \vdots & \vdots & \vdots & \udots  \\ 
\dots & \pi_{i-1,j-1} &\pi_{i-1,j} & \pi_{i-1,j+1} & \dots \\ 
\dots & \pi_{i,j-1} & \pi_{i,j} & \pi_{i,j+1} & \dots \\
\cmidrule{1-3}\dots & \textrm{sum these entries} & \multicolumn{1}{c|}{\pi_{i+1,j}} & \pi_{i+1,j+1} & \dots \\
\udots & \vdots     & \multicolumn{1}{c|}{\vdots}    & \vdots      & \ddots \end{pmatrix}$}$+$
\resizebox{.21\textwidth}{!}{$\begin{pmatrix} 
\ddots & \vdots & \multicolumn{1}{c|}{\vdots} & \vdots & \udots  \\ 
\dots & \pi_{2n-i-1,j-1} &\multicolumn{1}{c|}{ \pi_{2n-i-1,j}} & \textrm{sum these} & \dots \\ 
\dots & \pi_{2n-i,j-1} & \multicolumn{1}{c|}{\pi_{2n-i,j}} & \textrm{entries} & \dots \\ \cmidrule{4-5} 
\dots & \pi_{2n-i+1,j-1} & \pi_{2n-i+1,j} & \pi_{2n-i+1,j+1} & \dots \\
\udots & \vdots     & \vdots    & \vdots      & \ddots \end{pmatrix}.$}

Finally, the case $i=n$ says

\resizebox{.26\textwidth}{!}{$\begin{pmatrix} 
\ddots & \vdots & \vdots & \vdots & \udots  \\ 
\dots & \pi_{i-1,j-1} &\pi_{i-1,j} & \pi_{i-1,j+1} & \dots \\ 
\cmidrule{1-3}\dots & \textrm{sum these} & \multicolumn{1}{c|}{\pi_{i,j}} & \pi_{i,j+1} & \dots \\
\dots & \textrm{entries} & \multicolumn{1}{c|}{\pi_{i+1,j}} & \pi_{i+1,j+1} & \dots \\
\udots & \vdots     & \multicolumn{1}{c|}{\vdots}    & \vdots      & \ddots \end{pmatrix}=$}
$\frac{1}{2}$
\resizebox{.26\textwidth}{!}{$\begin{pmatrix} 
\ddots & \vdots & \multicolumn{1}{c|}{\vdots} & \vdots & \udots  \\ 
\dots & \textrm{sum} &\multicolumn{1}{c|}{ \pi_{i-1,j}} & \pi_{i-1,j+1} & \dots \\ 
\dots & \textrm{these} & \multicolumn{1}{c|}{\pi_{i,j}} & \pi_{i,j+1} & \dots \\
\dots & \textrm{entries} & \multicolumn{1}{c|}{\pi_{i+1,j}} & \pi_{i+1,j+1} & \dots \\
\udots & \vdots     & \multicolumn{1}{c|}{\vdots}    & \vdots      & \ddots \end{pmatrix}$}
$-\frac{1}{2}$\resizebox{.26\textwidth}{!}{$\begin{pmatrix} 
\ddots & \vdots & \multicolumn{1}{c|}{\vdots} & \vdots & \udots  \\ 
\dots & \textrm{sum these} &\multicolumn{1}{c|}{ \pi_{i-1,j}} & \pi_{i-1,j+1} & \dots \\ 
\dots & \textrm{entries} & \multicolumn{1}{c|}{\pi_{i,j}} & \pi_{i,j+1} & \dots \\
\cmidrule{1-3}\dots & \pi_{i+1,j-1} & \pi_{i+1,j} & \pi_{i+1,j+1} & \dots \\
\udots & \vdots     & \vdots    & \vdots      & \ddots \end{pmatrix}$}
$+\frac{1}{2}$\resizebox{.26\textwidth}{!}{$\begin{pmatrix} 
\ddots & \vdots & \vdots & \vdots & \udots  \\ 
\dots & \pi_{i-1,j-1} &\pi_{i-1,j} & \pi_{i-1,j+1} & \dots \\ 
\cmidrule{1-3}\dots & \textrm{sum these} & \multicolumn{1}{c|}{\pi_{i,j}} & \pi_{i,j+1} & \dots \\
\dots & \textrm{entries} & \multicolumn{1}{c|}{\pi_{i+1,j}} & \pi_{i+1,j+1} & \dots \\
\udots & \vdots     & \multicolumn{1}{c|}{\vdots}    & \vdots      & \ddots \end{pmatrix}.$}

\end{proof}

Because transposition and the map $\star :\mathfrak{CS}_{2n}\rightarrow B_n$ commute, we will from now on abuse notation and identify centrally symmetric permutations with their images in $B_n=C_n$.
\begin{proposition} 
$\mathbf{WT}_{C_n}(\pi^{\intercal})=(\mathbf{WT}_{B_n}(\pi))^{\intercal}$ for any $\pi \in B_n$.
\end{proposition}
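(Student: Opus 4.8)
The plan is to stay entirely at the level of $(n\times n)$ matrices and exploit the two auxiliary matrices $R=PC_{B_n}^{-1}$ and $S=QC_{C_n}^{-1}$ computed in \S\ref{b in rootcoords subsection}. Since the matrix of $\pi$ in simple-root coordinates is $P^{-1}\pi P$ for type B and $Q^{-1}\pi^{\intercal}Q$ for type C, I would first rewrite each Waldspurger transform in a factored form, pulling the relevant change-of-basis matrix through the Cartan inverse:
\begin{align*}
\mathbf{WT}_{B_n}(\pi)&=(I-P^{-1}\pi P)C_{B_n}^{-1}=P^{-1}(I-\pi)\,PC_{B_n}^{-1}=P^{-1}(I-\pi)R,\\
\mathbf{WT}_{C_n}(\pi^{\intercal})&=(I-Q^{-1}\pi^{\intercal}Q)C_{C_n}^{-1}=Q^{-1}(I-\pi^{\intercal})\,QC_{C_n}^{-1}=Q^{-1}(I-\pi^{\intercal})S.
\end{align*}
These two identities are the heart of the reduction, because they isolate all of the type-dependence into the flanking matrices $P^{-1},R$ and $Q^{-1},S$.

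With the factored forms in hand, I would simply transpose the first line, using that $I-\pi$ and $I-\pi^{\intercal}$ are mutual transposes:
\[
\bigl(\mathbf{WT}_{B_n}(\pi)\bigr)^{\intercal}=R^{\intercal}\,(I-\pi^{\intercal})\,(P^{-1})^{\intercal}.
\]
Comparing this with $\mathbf{WT}_{C_n}(\pi^{\intercal})=Q^{-1}(I-\pi^{\intercal})S$, the proposition reduces to the two $\pi$-free matrix identities
\[
(P^{-1})^{\intercal}=S,\qquad R^{\intercal}=Q^{-1}.
\]
The first is immediate from \S\ref{b in rootcoords subsection}: $P^{-1}$ is the lower-triangular all-ones matrix, so its transpose is the upper-triangular all-ones matrix $S$. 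For the second I would read off the explicit entries ($R^{\intercal}$ is lower-triangular all-ones with its last row halved, which is exactly $Q^{-1}$); alternatively, $R^{\intercal}=Q^{-1}$ follows formally from the first identity together with the type B--C Cartan duality $(C_{B_n}^{-1})^{\intercal}=C_{C_n}^{-1}$, since $C_{B_n}^{-1}=P^{-1}R$ and $C_{C_n}^{-1}=Q^{-1}S$ give $R^{\intercal}S=Q^{-1}S$, and $S$ is invertible.

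I expect the only delicate point to be the bookkeeping of the factor of $2$ attached to the last simple root, where types B and C differ ($\alpha_n=e_n$ versus $\alpha_n=2e_n$). This factor is precisely what turns the last column of $R$ and the last row of $Q^{-1}$ into halved all-ones vectors, and it is the same factor that makes $C_{B_n}$ and $C_{C_n}$ transposes rather than equal; once one checks that these halvings line up under transposition, the identities $(P^{-1})^{\intercal}=S$ and $R^{\intercal}=Q^{-1}$ hold and the result follows from the single transposition step above. No property of $\pi$ is used beyond signed permutation matrices being closed under transpose.
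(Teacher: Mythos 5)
Your proof is correct and is essentially the paper's own argument: both reduce the proposition to the transpose identities $S=(P^{-1})^{\intercal}$ and $R^{\intercal}=Q^{-1}$ and then take a single transpose of the factored expression, your version merely conjugating $(I-\pi)$ by the change-of-basis matrix before applying the inverse Cartan matrix rather than transposing the paper's chain of equalities term by term. One small remark: your description of $Q^{-1}$ as lower-triangular all-ones with halved last row is the correct one (the displayed formula for $Q^{-1}$ in Section \ref{b in rootcoords subsection} records only its diagonal entries and appears to be a typo), so your verification of $R^{\intercal}=Q^{-1}$ is rightly carried out against the actual inverse.
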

\begin{proof}
\begin{align*}
\mathbf{WT}_{C_n}(\pi^{\intercal}) &= Id-(Q^{-1} \pi^{\intercal} Q)C_{C_{n}}^{-1}\\
&=C_{C_{n}}^{-1}-(Q^{-1} \pi^{\intercal} S)\\
&=(C_{B_{n}}^{-1})^{\intercal}-(R^{\intercal} \pi^{\intercal} (P^{-1})^{\intercal})\\
&=Id-(P^{-1} \pi R)^{\intercal}(C_{B_{n}}^{-1})^{\intercal}\\
&=(Id-(P^{-1} \pi R)C_{B_{n}}^{-1})^{\intercal}\\
&=(\mathbf{WT}_{B_n}(\pi))^{\intercal}.
\end{align*}
\end{proof}

Informally, this proposition tells us that as far as the Waldspurger and Meinrenken theorems are concerned, types B and C are essentially the same.

\subsection{Smallest examples in full detail}\label{b2 deets subsection}

\begin{figure}
\centering 
\includegraphics[scale=.5]{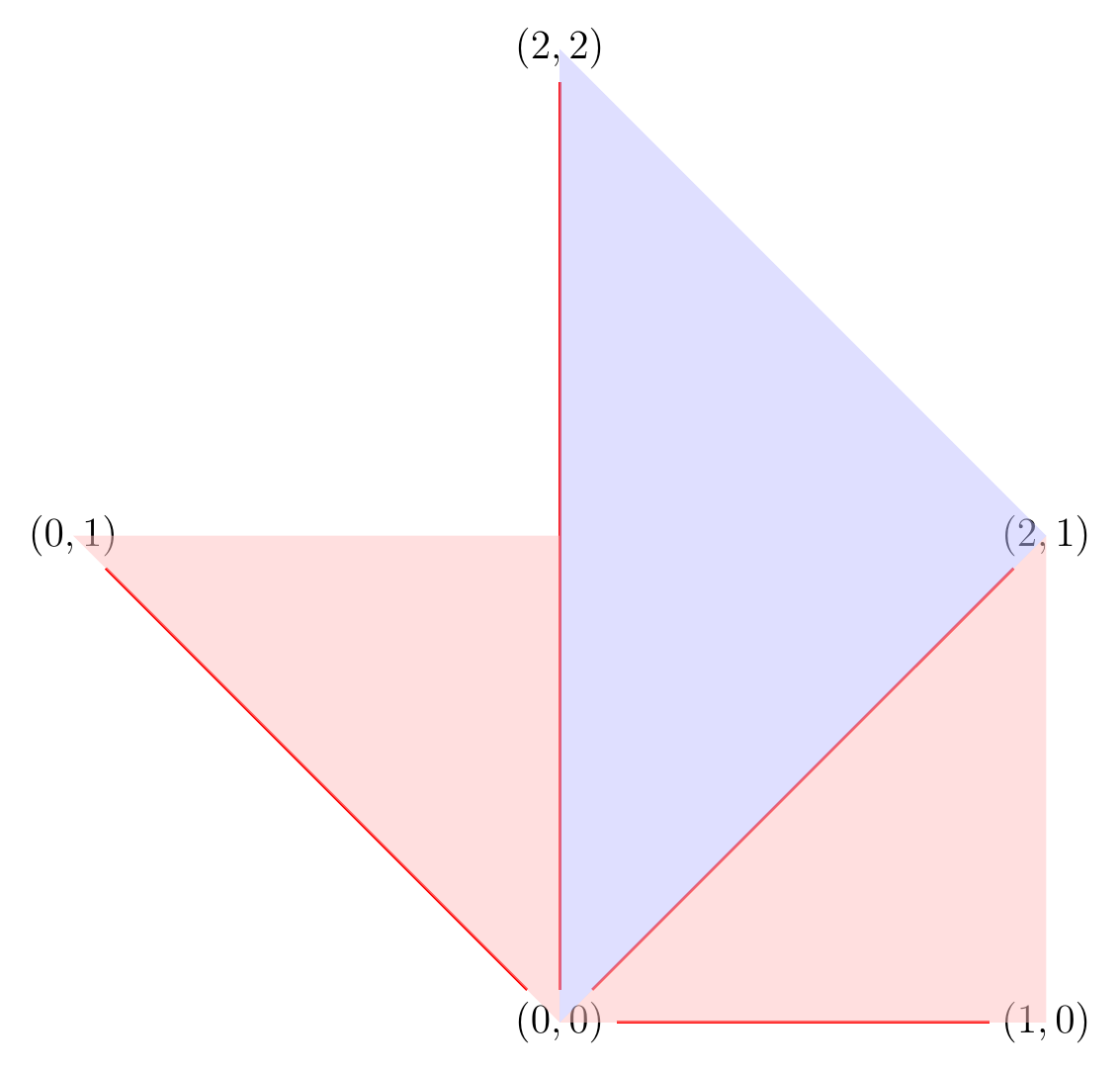}
\includegraphics[scale=.5]{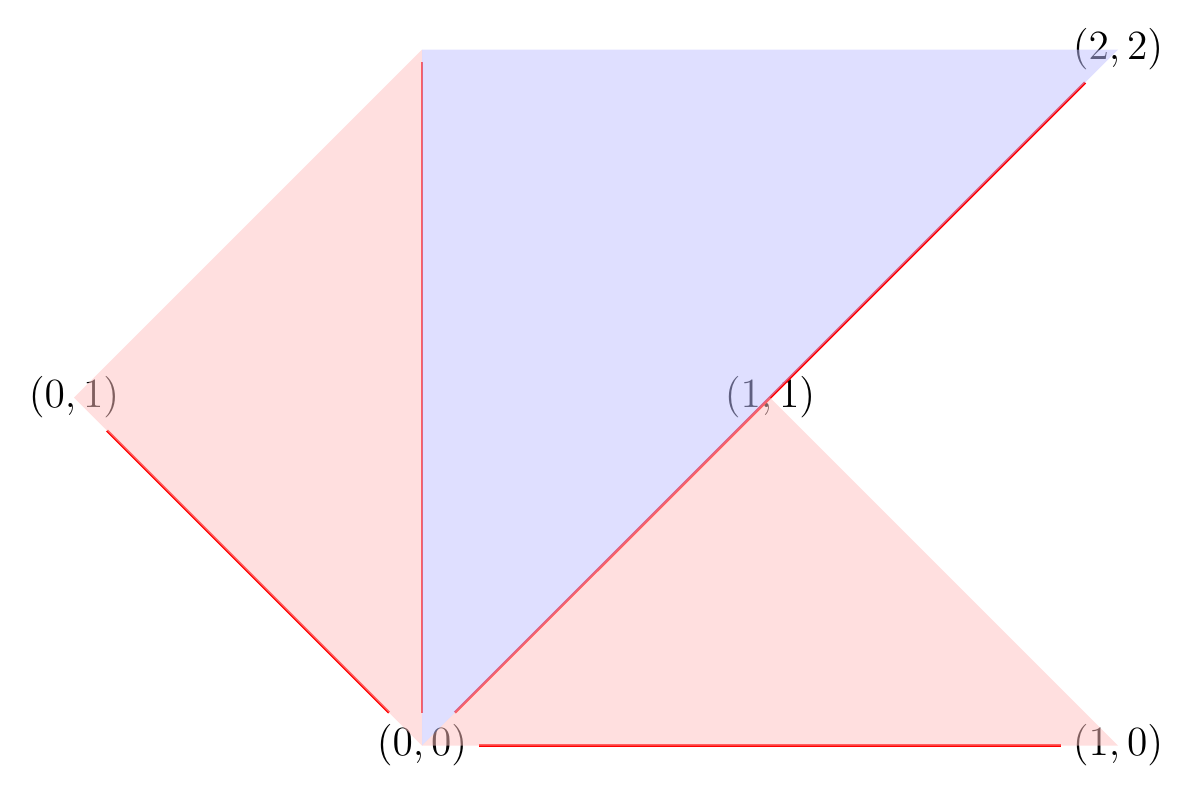}
\caption{The Meinrenken tiles for $C_2$ and $B_2$ respectively}
\end{figure}

There are exactly eight centrally symmetric $3 \times 3$ Waldspurger matrices of type A:
$$\left[\begin{matrix}
0&0&0\\
0&0&0\\
0&0&0
\end{matrix}\right], \hspace{10pt}
\left[\begin{matrix}
1&0&0\\
0&0&0\\
0&0&1
\end{matrix}\right], \hspace{10pt}
\left[\begin{matrix}
0&0&0\\
0&1&0\\
0&0&0
\end{matrix}\right], \hspace{10pt}
\left[\begin{matrix}
1&1&0\\
0&1&0\\
0&1&1
\end{matrix}\right] \textrm{ }$$ $$
\left[\begin{matrix}
1&0&0\\
1&1&1\\
0&0&1
\end{matrix}\right], \hspace{10pt}
\left[\begin{matrix}
1&1&1\\
1&1&1\\
1&1&1
\end{matrix}\right], \hspace{10pt}
\left[\begin{matrix}
1&1&0\\
1&2&1\\
0&1&1
\end{matrix}\right], \hspace{10pt}
\left[\begin{matrix}
1&1&1\\
1&2&1\\
1&1&1
\end{matrix}\right].$$

We may fold them vertically to get type C Waldspurger matrices, or horizontally to get type B:
$$\left[\begin{matrix}
0&0\\
0&0
\end{matrix}\right], \hspace{10pt}
\left[\begin{matrix}
1&0\\
0&0
\end{matrix}\right], \hspace{10pt}
\left[\begin{matrix}
0&0\\
0&1
\end{matrix}\right], \hspace{10pt}
\left[\begin{matrix}
1&2\\
0&1
\end{matrix}\right], \hspace{10pt}
\left[\begin{matrix}
1&0\\
1&1
\end{matrix}\right], \hspace{10pt}
\left[\begin{matrix}
2&2\\
1&1
\end{matrix}\right], \hspace{10pt}
\left[\begin{matrix}
1&2\\
1&2
\end{matrix}\right], \hspace{10pt}
\left[\begin{matrix}
2&2\\
1&2
\end{matrix}\right]
$$
$$\left[\begin{matrix}
0&0\\
0&0
\end{matrix}\right], \hspace{10pt}
\left[\begin{matrix}
1&0\\
0&0
\end{matrix}\right], \hspace{10pt}
\left[\begin{matrix}
0&0\\
0&1
\end{matrix}\right], \hspace{10pt}
\left[\begin{matrix}
1&1\\
0&1
\end{matrix}\right], \hspace{10pt}
\left[\begin{matrix}
1&0\\
2&1
\end{matrix}\right], \hspace{10pt}
\left[\begin{matrix}
2&1\\
2&1
\end{matrix}\right], \hspace{10pt}
\left[\begin{matrix}
1&1\\
2&2
\end{matrix}\right], \hspace{10pt}
\left[\begin{matrix}
2&1\\
2&2
\end{matrix}\right].
$$
Recall that, in type A, the dimensions of each of the simplices was determined by the number of cycles of the corresponding permutation, and so the number of simplices of a given dimension was a Stirling number of the first kind.  In type B, we see Suter's type B stirling numbers of the first kind \cite{stirlingB} with our $1$ point, $4$ edges, and $3$ triangles for $B_2$ and $C_2$.  
In this dimension there are two centrally symmetric ASMS which are not permutations, with type A Waldspurger matrices 
$\left [\begin{matrix}
1&0&0\\0&1&0\\0&0&1
\end{matrix}\right]$
and
$\left [\begin{matrix}
1&1&0\\1&1&1\\0&1&1
\end{matrix}\right ]$
.  They fold vertically to give us the two extra matrices picture in the right hand side of Figure \ref{b2comp}.
\begin{figure}
\begin{center}
\includegraphics[]{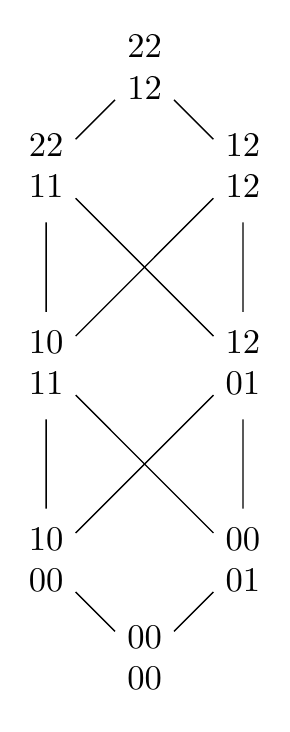}
\hspace{30pt}
\includegraphics[]{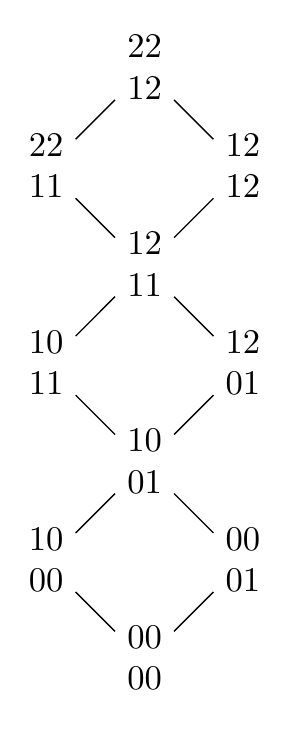}
\caption{In the case of $C_2$, (and $B_2$, though it is not shown here) componentwise comparison of Waldspurger matrices is exactly Bruhat order, and componentwise comparison of folded centrally symmetric ASMs is exactly its Dedekind-MacNeille completion.}
\label{b2comp}
\end{center}
\end{figure}

\subsection{UM vectors for types B and C}
While the our folding map $\mathcal{F}$ did not double the middle row of type A centrally symmetric Waldspurger matrices (folding it onto itself), it is combinatorially convenient for us to do so.  We will call such a map $\tilde{\mathcal{F}}$.  The following proposition combines the inequality description of UM vectors from Theorem \ref{inequality} and $\tilde{\mathcal{F}}$ to give inequality descriptions for ``UM vectors for types B and C''.

\begin{proposition}
Column and row vectors of type B and C Waldspurger matrices (with respect to $\tilde{\mathcal{F}}$) must start with entries $0,1,2$, increase by $0,1,$ or $ 2$ up to the diagonal, and increase by $-1,0,$ or $1$ after the diagonal, ending with an even number. \end{proposition}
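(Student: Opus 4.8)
The plan is to reduce every row and column of $\tilde{\mathcal{F}}(M)$ to one combinatorial operation — folding a length-$(2n-1)$ UM vector against its own mirror image — and then read the claimed bounds off the unimodality of UM vectors established in Section~\ref{um vectors section}. Write $M=\mathbf{WT}(\pi)$ for a centrally symmetric $\pi$. First I would record that $M$ is itself centrally symmetric, $M_{i,j}=M_{2n-i,\,2n-j}$ for all $1\le i,j\le 2n-1$; this is the substitution $a\mapsto 2n+1-a$, $b\mapsto 2n+1-b$ in the defining sums of Theorem~\ref{wald transform thm}, which interchanges the $i\le j$ and $i\ge j$ cases. In particular $M_{2n-i,\,j}=M_{i,\,2n-j}$, so combining row $i$ with row $2n-i$ is the same as adding the single row $i$ of $M$ to its own reversal. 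Since $\tilde{\mathcal F}$ pairs index $\ell$ with its mirror $2n-\ell$ (the center $n$ being self-paired and doubled), both the $j$th column and the $i$th row ($i<n$) of $\tilde{\mathcal F}(M)$ take the form $w_\ell=u_\ell+u_{2n-\ell}$ for $1\le\ell\le n$, where $u=(u_1,\dots,u_{2n-1})$ is the corresponding column (resp.\ row) of $M$ — a UM vector whose maximum sits at position $k=j$ (resp.\ $k=i$), i.e.\ at the diagonal entry, with $k\le n$. Note that this already forces $w_n=u_n+u_n=2u_n$, so the doubled central coordinate of $\tilde{\mathcal F}$ is exactly what makes the columns conform to this fold form as well.

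The whole statement then reduces to a single lemma. \textbf{Lemma.} Let $u$ be a UM vector of length $2n-1$ with peak at position $k\le n$, and set $w_\ell=u_\ell+u_{2n-\ell}$ for $1\le\ell\le n$. Then $w_1\in\{0,1,2\}$, the step $w_{\ell+1}-w_\ell$ lies in $\{0,1,2\}$ for $\ell<k$ and in $\{-1,0,1\}$ for $\ell\ge k$, and $w_n=2u_n$ is even. To prove it I would use that a UM vector starts and ends in $\{0,1\}$, so $w_1=u_1+u_{2n-1}\in\{0,1,2\}$, while evenness of $w_n=2u_n$ is immediate. For the steps, expand
\[
w_{\ell+1}-w_\ell=(u_{\ell+1}-u_\ell)+(u_{2n-\ell-1}-u_{2n-\ell}).
\]
For every $\ell\le n-1$ both reflected indices satisfy $2n-\ell-1\ge n\ge k$, so they lie in the weakly decreasing tail of $u$ and the reflected term is in $\{0,1\}$. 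The direct term $u_{\ell+1}-u_\ell$ is in $\{0,1\}$ while $\ell<k$ (still climbing) and in $\{-1,0\}$ once $\ell\ge k$ (descending). Adding, the step is in $\{0,1,2\}$ before the diagonal and in $\{-1,0,1\}$ after it.

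Applying the Lemma with $u$ equal to the $j$th column of $M$ (peak at $j\le n$) gives the column statement, and with $u$ equal to the $i$th row (peak at $i\le n$, reversed via central symmetry) gives the row statement for $i<n$; in each case the regime switch occurs precisely at the diagonal position of $\tilde{\mathcal F}(M)$. The only object not covered is the doubled central row $i=n$, namely $2u$ for $u$ a UM vector with peak at its last kept entry $n$; this trivially obeys the same bounds, as its steps lie in $\{0,2\}\subset\{0,1,2\}$, there is no post-diagonal region, and its final entry is even. The type B statement then follows from the type C case by the identity $\mathbf{WT}_{C_n}(\pi^{\intercal})=(\mathbf{WT}_{B_n}(\pi))^{\intercal}$, which swaps rows and columns.

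I expect the genuinely delicate point to be the index bookkeeping rather than any inequality: one must confirm that for every $\ell\le n-1$ the mirrored pair $(2n-\ell-1,\,2n-\ell)$ really stays in the weakly decreasing tail (so the reflected term never contributes a $-1$), that the peak position $k$ of each column and row is no larger than the center $n$, and that the transition between the two step regimes lands on the diagonal coordinate $k$ rather than one step early or late. The self-pairing of the center (giving $w_n=2u_n$) and the separate treatment of the doubled middle row are the places where an off-by-one error is easiest to make, so I would verify those boundary steps explicitly against the definitions of $\tilde{\mathcal F}$ and of central symmetry.
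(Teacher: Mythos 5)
Your proposal is correct and follows the same route the paper indicates (the paper states the proposition without a written proof, merely asserting that it follows by combining the UM-vector description with the folding map $\tilde{\mathcal{F}}$): you express each row and column of $\tilde{\mathcal{F}}(M)$ as a UM vector added to its own reversal and read off the step bounds from unimodality, which is exactly the intended argument, carried out in full with the boundary cases checked. The only discrepancy is notational: the paper's definition of $\mathcal{F}$ pairs index $i$ with $2n-i+1$ inside a $(2n-1)\times(2n-1)$ matrix, whereas your pairing $i\leftrightarrow 2n-i$ is the one consistent with central symmetry about the middle index $n$, and your version is the correct reading.
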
 

\begin{corollary}
There are $2\cdot 3^{n-1}$ UM vectors of type B.
\end{corollary}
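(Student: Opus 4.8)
The plan is to count type B UM vectors directly, reducing everything to the folding picture and the already-established type A combinatorics. By the preceding proposition every type B (equivalently, by the transpose symmetry $\mathbf{WT}_{C_n}(\pi^{\intercal})=\mathbf{WT}_{B_n}(\pi)^{\intercal}$, type C) UM vector of length $n$ is $\tilde{\mathcal{F}}(c)$ for a column $c$ of a centrally symmetric $\mathbf{WT}$-matrix of type $A_{2n-1}$; concretely it is the fold $u_i = w_i + w_{2n-i}$ (with central entry doubled, $u_n = 2w_n$) of a length-$(2n-1)$ type A UM vector $w$ whose maximum lies at the column's diagonal position. Central symmetry identifies column $j$ with the reversal of column $2n-j$, and these fold to the \emph{same} vector, so it suffices to let the diagonal position range over $1\le j\le n$, the central column $j=n$ being a palindrome. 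The first step is therefore to make the proposition's necessary conditions into an exact description of this image.

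Once the exact image is in hand, the enumeration itself is routine. I would encode a candidate $v=(v_1,\dots,v_n)$ by its step word, with $v_1\in\{0,1,2\}$, every step in $\{-1,0,1,2\}$, every $+2$ step preceding every $-1$ step (this is precisely the existence of a legal diagonal $p$ separating the ``$\{0,1,2\}$ before, $\{-1,0,1\}$ after'' regions), nonnegative partial sums, and $v_n$ even; one then runs a transfer-matrix/lattice-path count tracking the running value together with a single bit recording whether a $-1$ has already occurred, exactly in the spirit of the UMP counting lemma. Equivalently, I would try to establish the recursion $a_n = 3a_{n-1}$ with $a_1 = 2$ (the length-one vectors being $\{0,2\}$), by peeling off the last diagonal as in the induction of the monotone-triangle section; this is where the clean product structure $2\cdot 3^{\,n-1}$ should emerge, and it already checks out in the smallest case ($n=2$ gives the six vectors $(0,0),(1,0),(0,2),(2,2),(1,2),(2,4)$).

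The hard part is \emph{not} the count but pinning down the image, because the proposition's conditions are only necessary: the step conditions alone overcount (for $n=3$ they admit $21$ sequences rather than the correct $18$). The surplus comes from the interaction between the evenness/nonnegativity boundary data and the diagonal position, i.e.\ from whether a legal unimodal preimage $w$ actually exists. For instance $(1,2,4)$ is \emph{forced} to peak at its last coordinate, so its preimage would be the palindromic central column and the whole vector would have to be even---which it is not; and $(2,1,0)$, $(2,1,2)$ have $v_1=2$, which forces both mirrored endpoints $w_1=w_{2n-1}=1$ of the preimage, incompatible with the small central value required by $u_n$ and hence with unimodality of $w$. So the crux is to prove that the refined conditions I isolate carve out exactly the folds of genuine unimodal preimages (equivalently, exactly the columns of centrally symmetric $A_{2n-1}$ matrices up to the $j\leftrightarrow 2n-j$ identification); after that, the transfer-matrix or inductive computation delivers $2\cdot 3^{\,n-1}$.
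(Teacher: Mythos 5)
Your setup is right and your diagnosis is genuinely valuable: the paper offers no proof of this corollary (it is stated as if it followed immediately from the preceding proposition), and you are correct that the proposition's conditions are only necessary. I confirm that the length-$3$ vectors satisfying ``start in $\{0,1,2\}$, nonnegative, steps in $\{0,1,2\}$ then $\{-1,0,1\}$ about some diagonal position, even last entry'' number $21$ rather than $18$, and that your three excluded vectors $(1,2,4)$, $(2,1,0)$, $(2,1,2)$ are exactly the ones admitting no legal centrally symmetric preimage (the first because any unimodal preimage is forced to peak at the central, necessarily palindromic, column, and the other two because $v_1=2$ forces $w_1=w_{2n-1}=1$, which is incompatible with the small interior values). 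Your $n=2$ list of six vectors also checks out against the eight folded matrices displayed in the paper. So you have correctly identified that any honest proof must characterize the image of the fold, not merely count solutions to the proposition's inequalities.

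As a proof, however, the proposal has a genuine gap, and you flag it yourself: the ``refined conditions'' that are supposed to carve out the exact image are never actually stated, and neither the transfer-matrix count nor the recursion $a_n=3a_{n-1}$ is established --- these two steps \emph{are} the proof, and everything before them is setup. There is also a third, unflagged gap: you pass freely between ``folds of genuine unimodal preimages with peak at $j$ (palindromic when $j=n$)'' and ``columns of centrally symmetric type $A_{2n-1}$ Waldspurger matrices,'' calling them equivalent. But the claim that every type A UM vector with peak at $j<n$ is realized as column $j$ of $\mathbf{WT}(\pi)$ for some \emph{centrally symmetric} permutation $\pi$ (and every palindromic one as the central column) is an existence statement requiring an argument: the analogous surjectivity in type A is essentially the definition of a UM vector, whereas here the realizing permutation is constrained. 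Until the image is pinned down explicitly, that realizability is proved, and the count is carried out, this remains a correct and well-motivated plan rather than a proof.
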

\subsection{Conjectural description of elements in the Base}\label{base conjecture subsection}
Recall that, in type A, one could obtain any element of the base by specifying a single entry in a Waldspurger matrix (as long as it was below the corresponding entry in the Waldspurger matrix corresponding to the longest word) and ``falling down'' (see Figure \ref{tetrahedral}).  In contrast, the type C Waldspurger matrix for the longest word (with respect to $\tilde{\mathcal{F}}$) is 
$$\left[\begin{matrix}
2&2&2&2&\dots &2&2\\
2&4&4&4&\dots &4&4\\
2&4&6&6&\dots &6&6\\
\dots&\dots&\dots&\dots& \dots & \dots\\
2&4&6&8&\dots&2(n-1)&2(n-1)\\
2&4&6&8&\dots&2(n-1)&2n
\end{matrix}\right]
$$
but when constructing a Type C Waldspurger matrix, we are no longer free to specify entries on the right or bottom boundaries to be odd.  Consequentially, the type C analog of Figure \ref{tetrahedral} as the number of ways of determining the $(i,j)$th entry of a type C Waldspurger matrices is

$$\begin{matrix}
2&2&2&2&\dots &2&1\\
2&4&4&4&\dots &4&2\\
2&4&6&6&\dots &6&3\\
\dots&\dots&\dots&\dots& \dots & \dots\\
2&4&6&8&\dots&2(n-1)&n-1\\
1&2&3&4&\dots&n-1&n
\end{matrix}.
$$
It is straight forward to verify that the entries above sum to an octahedral number, and this supports our Conjecture \ref{b bases conj}.

In $C_4$ we first see the distinction between bigrassmannian elements, and elements of the base.  Recall from Equations \ref{type b bigrass formula} and \ref{type b base formula} that there are $45$ bigrassmannian elements, but only $44$ elements in the base. The Waldspurger matrix of every bigrassmannian element is minimal with respect to a fixed single entry, but there is one collision.  There are two incomparable bigrassmannian elements that are minimal after fixing the $(2,2)$ entry to be a two:
$$\left[ 
\begin{matrix}
1&1&0&0\\
1&\colorbox{green}{$2$}&1&0\\
0&1&1&0\\
0&0&0&0
\end{matrix}
\right] \textrm{ vs }
\left[ 
\begin{matrix}
0&0&0&0\\
0&\colorbox{green}{$2$}&2&1\\
0&2&2&1\\
0&2&2&1
\end{matrix}
\right].$$
The matrix on the left is in the base, and the one on the right is not.  The question ``bigrassmannian vs base'' seems intimately connected to the question:  What is the type C analog of the type A ``falling down'' algorithm? 
These two matrices may be ``unfolded'' to the centrally symmetric type A Waldspurger matrices:
$$\left[\begin{matrix}
1&1&0&0&0&0&0\\
1&\colorbox{green}{$2$}&1&0&0&\colorbox{green}{$0$}&0\\
0&1&1&0&0&0&0\\
0&0&0&0&0&0&0\\
0&0&0&0&1&1&0\\
0&\colorbox{green}{$0$}&0&0&1&\colorbox{green}{$2$}&1\\
0&0&0&0&0&1&1
\end{matrix}
\right] \textrm{ and }
\left[\begin{matrix}
0&0&0&0&0&0&0\\
0&\colorbox{green}{$1$}&1&1&1&\colorbox{green}{$1$}&0\\
0&1&1&1&1&1&0\\
0&1&1&1&1&1&0\\
0&1&1&1&1&1&0\\
0&\colorbox{green}{$1$}&1&1&1&\colorbox{green}{$1$}&0\\
0&0&0&0&0&0&0
\end{matrix}
\right].$$
If we write $2=2+0$ and we ``fall down'' in the type A way, we get the matrix on the left.  If we write $2=1+1$ and ``fall down'' in the type A way, we get the matrix on the right.  Conjecturally, the Waldspurger matrices of bigrassmannian elements for $C_n$ are determined by specifying a single entry, unfolding it to specify four entries of a $(2n-1)\times(2n-1)$ centrally symmetric type A Waldspurger matrix, and then performing the type A falling down algorithm.  Elements of the base come from unfolding the specified entry as inequitably as possible.

\subsection{Waldspurger Order}
Define the \emph{Waldspurger Order} on a finite reflection group to be the componentwise order on Waldspurger matrices.  One is given hope in the low dimensions that Bruhat order might, as in type A, be merely componentwise comparison of Waldspurger Matrices.  This is true for $C_2$ and $C_3$ and in both cases, the Dedekind-MacNeille completion comes from simply folding centrally symmetric ASMs.  It fails for $C_n$ when $n\geq 4$, (though it appears to be an order extension).

Among the bigrassmannian elements of $C_4$, there are exactly two cover relations in Waldspurger order which are not cover relations in Bruhat order:
$$\left[ 
\begin{matrix}
1&1&0&0\\
1&2&1&0\\
0&1&1&0\\
0&0&0&0
\end{matrix}
\right]<
\left[ 
\begin{matrix}
2&2&2&1\\
2&2&2&1\\
2&2&2&1\\
2&2&2&1
\end{matrix}
\right] \textrm{ and }
\left[
\begin{matrix}
0&0&0&0\\
0&2&2&1\\
0&2&2&1\\
0&2&2&1
\end{matrix}
\right]<
\left[
\begin{matrix}
1&1&1&0\\
1&2&3&1\\
1&3&5&2\\
0&2&4&2
\end{matrix}
\right].
$$
Folded $8 \times 8$ centrally symmetric ASMs also fail to be a lattice with respect to componentwise order.  The same sort of failures were recognized for signed monotone triangles by Reading in Section 10, Question 4 of \cite{Reading2002}.

\section{Further Questions}\label{further questions section}
\begin{enumerate}
\item It is curious that the same elements which caused bigrassmannian$\neq$base, for $B_4$ and $C_4$ are involved in causing Waldspurger order$\neq$Bruhat order.  Is this a coincidence, or can one use it to give a concrete combinatorial description of elements of Dedekind-MacNeille completion of Bruhat order for types B and C using Waldspurger matrices?
\item Is there a simple way to determine a signed permutation's essential set in the sense of \cite{2016arXiv161208670A} from its Waldspurger matrix?
\item Is there a description of Waldspurger order in terms of words in the Coxeter group? 
\item How many elements are there in the Dedekind-MacNeille completion of Bruhat order for type B?
\item In \cite{meinrenken2009tilings}, Meinrenken has another intriguing theorem:  Let $W$ an affine Weyl group with $A$ a fundamental alcove. Then for any endomorphism $S$, in the same connected component as $0$ in the set $\left\{ S \in \textrm{End}(V) \mid \textrm{det}(S-w)\neq 0 \forall w \in W\right\}$, the simplices $(S-w)A$ for $w \in W$ are all disjoint and their closures cover the entire vector space $V$. \newline This theorem seems to provide an interesting interpolation between the affine hyperplane arrangement, or Stiefel diagram, and the Meinrenken tile.  Does any nice combinatorics arise from selecting nice endomorphisms?  Is there an intrinsic characterization of the types of tilings that arise in this way?
\item It is a classical result in Ehrhart theory \cite{postnikov2009permutohedra} that the number of points inside of the permutohedron is the number of forests on the vertex set $\{1,2,\dots n\}$. We have exhibited a surjective map from ASMs to these same points.  Is there an interpretation of these multiplicities in terms of forest structures?  
\end{enumerate}
\bibliography{sample}
\bibliographystyle{alpha}

\end{document}